\newcommand{\stkout}[1]{\ifmmode\text{\sout{\ensuremath{#1}}}\else\sout{#1}\fi}
\numberwithin{equation}{section}
\newtheorem{prop}{Proposition}
\newtheorem{lemma}[prop]{Lemma}
\newtheorem{thm}[prop]{Theorem}
\newtheorem{cor}[prop]{Corollary}
\numberwithin{prop}{section}
\newtheorem{defn}[prop]{Definition}
\theoremstyle{definition}
\newtheorem{rmk}[prop]{Remark}
\definecolor{c1}{rgb}{0.2,0.4,0.5}
\definecolor{c2}{rgb}{0.1,0.3,0.5}
\definecolor{c3}{rgb}{0.2,0.7,0.5}
\def \k {K\"ahler }
\newcommand{\oo}[1]{\overline{#1}}
\newcommand{\del}{\partial}
\newcommand{\bdel}{\bar{\partial}}
\newcommand{\ga}{\alpha}
\newcommand{\gw}{\omega}
\newcommand{\ten}{\otimes}
\newcommand{\dbar}{\oo\partial}
\newcommand{\eps}{\varepsilon}
\DeclareMathOperator{\Real}{Re}
\begin{document}

\title[Asymptotic properties of Bergman kernels]{Asymptotic properties of Bergman kernels for potentials with Gevrey regularity }

\begin{abstract} We study the asymptotic properties of the Bergman kernels associated to tensor powers of a positive line bundle on a compact K\"ahler manifold. We show that if the K\"ahler potential is in Gevrey class $G^a$ for some $a>1$, then the Bergman kernel accepts a complete asymptotic expansion in a neighborhood of the diagonal of shrinking size $k^{-\frac12+\frac{1}{4a+4\eps}}$ for every $\eps>0$. These improve the earlier results in the subject for smooth potentials, where an expansion exists in a $(\frac{\log k}{k})^{\frac12}$ neighborhood of the diagonal. We obtain our results by finding upper bounds of the form $C^m m!^{2a+2\eps}$ for the Bergman coefficients $b_m(x, \bar y)$ in a fixed neighborhood by the method of \cite{BBS}. We also show that sharpening these upper bounds would improve the rate of shrinking neighborhoods of the diagonal $x=y$ in our results. 
\end{abstract}




\author [Xu]{Hang Xu}
\address{Department of Mathematics, Johns Hopkins University, Baltimore, MA 21218, USA}
\email{\href{mailto:hxu@math.jhu.edu}{hxu@math.jhu.edu}}

\maketitle

\section{Introduction}

Let $(L,h) \to M$ be a positive Hermitian holomorphic line bundle over a compact complex manifold of dimension $n$. The metric $h$ induces the \k form $\gw= -\tfrac{\sqrt{-1}}{2} \del \bdel \log(h)$ on $M$.  For $k$ in $\mathbb N$, let $H^0(M,L^k)$ denote the space of holomorphic sections of $L^k$. The {Bergman projection} is the orthogonal projection $\Pi_k: {L}^{2}(M,L^k) \to H^0(M,L^k)$ with respect to the natural inner product induced by the metric $h^k$ and the volume form $\frac{ \gw^n }{n!}$. The \emph{Bergman kernel} $K_k$, a section of $L^k\ten \bar{L}^k$, is the distribution kernel of $\Pi_k$.
Given $p \in M$,  let $(V, e_L)$ be a local trivialization of $L$ near $p$.   We write $| e_L |^2_{h}=e^{-\phi}$ and call $\phi$ a local \k potential. In the frame $ e_L^{k} \ten {\bar{e}_L^{k}}$, the Bergman kernel $K_k(x,y)$ is understood as a function on $V \times V$. We note that on the diagonal $x=y$, the function $K_k(x,x)e^{-k\phi(x)}$ is independent of the choice of the local frame, hence it is a globally defined function on $M$  called the \emph{Bergman function}, which is also equal to $| K_k(x, x)  |_{h^k}$. 

Zelditch \cite{Ze1} and Catlin \cite{Ca} proved that on the diagonal $x=y$, the Bergman kernel accepts a complete asymptotic expansion of the form
\begin{equation}\label{ZC}
K_k(x,x)e^{-k\phi(x)}\sim\frac{k^n}{\pi^n}\left(b_0(x,\bar{x})+\frac{b_1(x,\bar{x})}{k}+\frac{b_2(x,\bar{x})}{k^2}+\cdots\right).
\end{equation}
\textit{Near the diagonal}, i.e. in a $\sqrt{\frac{\log k}{k}}$-neighborhood of the diagonal, one has a scaling asymptotic expansion for the Bergman kernel (see \cite{ShZe, MaMaBook, MaMaOff, LuSh, HKSX}).  For $d(x, y) \gg \sqrt{\frac{\log k}{{k}}}$, where $d$ is the Riemannian distance induced by $\omega$, no useful asymptotics are known for smooth metrics. However,  there are off-diagonal upper bounds of Agmon type 
\begin{equation}\label{Agmon} \left | K_k(x, y) \right |_{h^k}  \leq C k^n e^{- c \sqrt{k} d(x, y)}, \end{equation}
proved for smooth metrics in \cite{Ch1, Del, Lindholm, Bern,MaMaAgmon}. In fact as shown in \cite{Ch3, HX}, one has better decay estimates. More precisely, there exist positive constants $c, C$ and a function $f(k) \to \infty$ as $k \to \infty$ such that
\begin{equation*}
|K_k(x,y)|_{h^k} \leq 
\begin{cases}
C k^{n} e^{-c \, k d(x,y)^2}, \quad & d(x,y)\leq f(k)\sqrt{\frac{\log k}{k}},\\
C k^{n} e^{-c \, f(k)\sqrt{k\log k} \,\, d(x, y)}, \quad & d(x,y) \geq f(k)\sqrt{\frac{\log k}{k}}.
\end{cases}
\end{equation*} 
A quantitative version of the above
estimate that relates the growth rate of $f(k)$ to the growth rate of the derivatives of the metric $h$ is obtained in \cite{HX}. In particular, when $h$ is in the Gevrey $a$ $(a\geq 1)$ class, we get $f(k)= \frac{k^{\frac{1}{4a-2}}}{\sqrt{\log k}}$.

This article generalizes the results in \cite{HLXanalytic} to the setting of Gevrey classes. To be precise, we prove an asymptotic expansion in a $k^{-\frac{1}{2}+\frac{1}{4a+4\eps}}$ neighborhood of the diagonal  for any $\eps>0$ if the metric $h$ is in the Gevrey $a$ $(a>1)$ class. In particular, we show that in the Gevrey $a$ class, uniformly for all sequences $x_k$ and $y_k$ with $d(x_k, y_k) \leq k^{-\frac{1}{2}+\frac{1}{4a+4\eps}}$, we have
$$  \left | K_k(x_k, y_k) \right |_{h^k} \sim \frac{k^n}{\pi^n} e^{- \frac{k D(x_k, y_k)}{2}}, \quad \text{as} \quad k \to \infty,$$ where $D(x, y)$ is Calabi's diastasis function \eqref{Diastatis}, which is controlled from above and below by $d^2(x, y)$.  
Before we state the results we must also mention that in \cite{BBS}, there is an off-diagonal asymptotic expansion for the Bergman kernel of the form 
\begin{equation}\label{ill}
K_k(x,y)=e^{k\psi(x,\bar y)}\frac{k^n}{\pi^n}\left (1+\sum _{j=1}^{ {N-1}}\frac{b_j(x, \bar y)}{k^j}\right )+e^{k\left(\frac{\phi(x)}{2}+\frac{\phi(y)}{2}\right)}k^{-N+ n}O_N(1),
\end{equation} 
which holds for all $d(x, y) \leq \delta$ for some $\delta >0$. Here, $\psi(x, \bar y)$ and $b_j(x, \bar y)$ are almost holomorphic extensions of $\phi(x)$ and $b_j(x, \bar x)$ from \eqref{ZC}. However, note that this expansion is only useful when the term $e^{k\left(\frac{\phi(x)}{2}+\frac{\phi(y)}{2}\right)}k^{-N+ n}$ is a true remainder term, i.e. it is less than the principal term $k^n e^{k \psi(x,\bar y)}$ in size, which holds only in a neighborhodd $d(x, y) \leq {C} \sqrt{\frac{\log k}{{k}}}$ in general. In the case that $h$ is real analytic, this is valid in a larger neighborhood $d(x,y)\leq k^{-1/4}$ \cite{HLXanalytic}. In a recent preprint \cite{RSN}, this is further improved to a fixed neighborhood independent of $k$. 

We now state our main result and its corollaries. 
\begin{thm}\label{Main}
Assume that the local \k potential $\phi$ is in the Gevrey class $G^a(V)$ for some $a>1$, meaning that for some $C_0$ and $C_1>0$, we have
\begin{equation}
\|D_z^\alpha D_{\bar{z}}^\beta \phi(z)\|_{L^\infty (V)}\leq C_0C_1^{|\alpha|+|\beta|}(\alpha!\beta!)^a,
\hspace{12pt} \mbox{ for any multi-indices }\alpha,\beta\geq 0.
\end{equation}
Then for every $\eps>0$, there exist positive constants $\delta$ and $C$, and an open set $U \subset V$ containing $p$, such that for $N_0(k)=[(\frac{k}{C})^{\frac{1}{2a+2\eps}}]$ and uniformly for any $x,y\in U$, we have in the frame $e_L^k\otimes \bar{e}_L^k$
\begin{equation*}
K_k(x,y)=e^{k\psi(x,\bar y)}\frac{k^n}{\pi^n}\left (1+\sum _{j=1}^{ N_0(k)-1}\frac{b_j(x, \bar y)}{k^j}\right )+e^{k\left(\frac{\phi(x)}{2}+\frac{\phi(y)}{2}\right)}e^{-\delta k^{\frac{1}{2a+2\eps}}}O(1), 
\end{equation*}
where $\psi(x, z)$ is a certain almost holomorphic extension of $\phi(x)$ near the diagonal \footnote{In the sense of Borel and H\"ormander \cite{HoAH}; see our definition \eqref{AHE}.} and $b_m(x , z)$ are certain almost holomorphic extensions (defined by \eqref{Recursive}) of the Bergman kernel coefficients $b_m(x, \bar x)$. 
\end{thm}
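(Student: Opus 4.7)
The plan is to start from the off-diagonal asymptotic expansion \eqref{ill} of Berman--Berndtsson--Sj\"ostrand and allow the number of terms $N$ to grow with $k$. Two ingredients are needed: first, a sharp Gevrey-type upper bound on the Bergman coefficients $b_m(x,\bar y)$ in a fixed neighborhood of $p$; and second, a quantitative version of the BBS remainder estimate in which the dependence of the implicit constant $O_N(1)$ on $N$ is tracked explicitly. With both in hand, the optimal choice is $N_0(k)=\lfloor (k/C)^{1/(2a+2\eps)}\rfloor$, which balances the size of the last retained coefficient against the stationary-phase error.

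The heart of the argument is the following uniform bound: there exist $C_2,C_3>0$ and a neighborhood $W\ni p$ such that
\begin{equation*}
|b_m(x,\bar y)| \leq C_2\, C_3^m\, (m!)^{2a+2\eps}, \qquad (x,y)\in W\times W,\ m\geq 0.
\end{equation*}
To prove this I would follow the recursive construction of \cite{BBS}, which produces $b_m(x,\bar y)$ by iteratively applying a differential operator $\mathcal{L}_\psi$ of bounded order (built from derivatives of an almost holomorphic extension $\psi(x,\bar y)$ of $\phi$ and from the inverse of the complex Hessian of $\phi$) to the constant function $1$. The Gevrey hypothesis on $\phi$, combined with a Borel--H\"ormander type extension carried out in the Gevrey category, yields Gevrey-$(a+\eps)$ bounds for the derivatives of $\psi$ on a fixed neighborhood of the anti-diagonal. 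An induction on $m$, using standard convolution-type factorial inequalities to absorb combinatorial factors at each step, then yields the claimed rate: schematically, $b_m$ involves derivatives of $\phi$ of order up to roughly $2m$, so the Gevrey-$(a+\eps)$ inputs of size $((2m)!)^{a+\eps}$ propagate to the asserted output size $(m!)^{2a+2\eps}$.

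With the coefficient bounds in place, I would revisit the stationary-phase proof of \eqref{ill} and refine it to the quantitative form
\begin{equation*}
\Bigl| K_k(x,y) - e^{k\psi(x,\bar y)}\tfrac{k^n}{\pi^n}\sum_{j=0}^{N-1}\tfrac{b_j(x,\bar y)}{k^j}\Bigr| \leq e^{k(\phi(x)+\phi(y))/2}\, A^N (N!)^{2a+2\eps}\, k^{n-N},
\end{equation*}
valid uniformly in $1\leq N\leq N_0(k)$ for some $A>0$. The derivation is again of Gevrey flavor: one follows the BBS stationary-phase argument, keeping track of how many derivatives of the phase and amplitude appear in each integration-by-parts step and invoking the Gevrey bounds on $\psi$. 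Setting $N=N_0(k)$ and applying Stirling gives $(N_0!)^{2a+2\eps}\leq (k/C)^{N_0}$ once the constant $C$ in the definition of $N_0(k)$ is large enough, so the right-hand side is bounded by $e^{-\delta k^{1/(2a+2\eps)}}e^{k(\phi(x)+\phi(y))/2}$ for some $\delta>0$, which is precisely the remainder claimed in the theorem.

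The main obstacle is the inductive Gevrey bound on $b_m$. In the smooth setting each $b_m$ is controlled in isolation with $m$-dependent constants, whereas here the geometric rate $C_3^m$ and the factorial rate $(m!)^{2a+2\eps}$ must hold simultaneously and uniformly on a fixed neighborhood. This demands careful bookkeeping of how many derivatives of $\phi$ and $\psi$ arise in $b_m$, and of how the $\bar\partial$-flat remainders inherent in almost holomorphic extensions interact with the BBS recursion. A secondary subtlety is that the Borel construction of an almost holomorphic extension in the Gevrey category only attains class $G^{a+\eps}$ rather than $G^a$ itself, and this is the source of the small loss $\eps$ in the factorial exponent.
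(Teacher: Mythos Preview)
Your proposal is correct and follows essentially the same route as the paper: the paper proves the coefficient bound $|b_m(x,\bar y)|\le C^m(m!)^{2a+2\eps}$ by induction on the BBS recursive formula for $b_m$ (Theorem~\ref{MainLemma}), then tracks constants through the BBS stationary-phase construction to obtain the quantitative remainder $C^{N+1}(N+1)!^{2a+2\eps}/k^{N+1}$ (Propositions~\ref{LocalBergmanKernel} and~\ref{LocalGlobal}, the latter passing from local to global via H\"ormander's $\bar\partial$-estimates), and finally sets $N=N_0(k)-1$ and uses Stirling exactly as you describe. Your identification of the $\eps$-loss as arising from the Gevrey almost holomorphic extension is also on the mark.
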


As a first corollary of this theorem, we get a complete asymptotic expansion in a $k^{-\frac12+\frac{1}{4a+4\eps}}$ neighborhood of the diagonal.

\begin{cor}\label{complete asymptotics}
Given the same assumptions and notations as in the above theorem, there exist positive  constants $C$ and $\delta$, and an open set $U \subset V$ containing $p$, such that for all $k$ and $N\in \mathbb{N}$, we have for all $x,y\in U$ satisfying $d(x,y)\leq \delta k^{-\frac{1}{2}+\frac{1}{4a+4\eps}}$,
\begin{equation} \label{complete formula}
K_k(x,y)=e^{-k\psi(x,\bar{y})}\frac{k^n}{\pi^n}\left(1+\sum_{j=1}^{{N-1}}\frac{b_j(x,\bar{y})}{k^j}+\mathcal{R}_N(x, \bar y,k)\right),
\end{equation}
where 
\begin{equation}\label{complete error}
	\left|\mathcal{R}_N(x, \bar y ,k)\right|\leq {\frac{C^{N}N!^{2a+2\eps}}{k^N}}.
\end{equation}

And if we only assume $b_m(x,z)$ are arbitrary almost holomorphic extensions of Bergman kernel coefficients $b_m(x,\bar{x})$, then we still have \eqref{complete formula}. But the remainder term estimate will be weaker: $\left|\mathcal{R}_N(x, \bar y ,k)\right|\leq \frac{C_N}{k^N}$ for some constant $C_N$.
 
\end{cor}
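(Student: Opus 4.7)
The plan is to derive Corollary \ref{complete asymptotics} directly from Theorem \ref{Main} together with the Gevrey-type bound $|b_j(x,\bar y)| \leq C_1^j (j!)^{2a+2\eps}$ on the Bergman coefficients (the key estimate announced in the abstract), which holds uniformly in a fixed neighborhood of $p$.

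\textbf{Step 1 (choice of constants).} First I would fix the constant $C$ inside $N_0(k)=[(k/C)^{1/(2a+2\eps)}]$ large enough (at least $C\geq 2C_1$) so that for every $1\leq j\leq N_0(k)-1$ the ratio between consecutive Gevrey bounds satisfies
$$\frac{C_1 (j+1)^{2a+2\eps}}{k}\;\leq\;\frac{C_1 N_0(k)^{2a+2\eps}}{k}\;\leq\;\tfrac12.$$

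\textbf{Step 2 (tail of the series, $N\leq N_0(k)$).} Writing
$$\sum_{j=1}^{N_0(k)-1}\frac{b_j(x,\bar y)}{k^j}=\sum_{j=1}^{N-1}\frac{b_j(x,\bar y)}{k^j}+\sum_{j=N}^{N_0(k)-1}\frac{b_j(x,\bar y)}{k^j},$$
I would dominate the second sum by the geometric series provided by Step 1, obtaining
$$\left|\sum_{j=N}^{N_0(k)-1}\frac{b_j(x,\bar y)}{k^j}\right|\;\leq\;\frac{2 C_1^N (N!)^{2a+2\eps}}{k^N}.$$

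\textbf{Step 3 (absorbing the exponential error).} Theorem \ref{Main} gives a remainder $e^{k(\phi(x)+\phi(y))/2}e^{-\delta k^{1/(2a+2\eps)}}O(1)$. Dividing by the leading $e^{k\psi(x,\bar y)}k^n/\pi^n$ produces a factor of modulus $e^{kD(x,y)/2}$, where $D$ is Calabi's diastasis and satisfies $D(x,y)\leq C' d(x,y)^2$. The hypothesis $d(x,y)\leq \delta k^{-1/2+1/(4a+4\eps)}$ then gives $kD(x,y)/2\leq (C'\delta^2/2)k^{1/(2a+2\eps)}$, so after shrinking $\delta$ (depending on $C'$) we have
$$e^{kD(x,y)/2}e^{-\delta k^{1/(2a+2\eps)}}\;\leq\;e^{-(\delta/2)k^{1/(2a+2\eps)}}.$$
A Stirling comparison then shows that $k^{-n}e^{-(\delta/2)k^{1/(2a+2\eps)}}$ is bounded by $C_3^N (N!)^{2a+2\eps}/k^N$ uniformly for $1\leq N\leq N_0(k)$, because the right-hand side attains its minimum near $N=N_0(k)$, where it is of order $e^{-(2a+2\eps)N_0(k)}$, the same exponential scale as the left-hand side.

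\textbf{Step 4 (range $N>N_0(k)$ and cleanup).} For $N>N_0(k)$ I would apply Steps 2--3 at $N'=N_0(k)$ and move the extra terms $\sum_{j=N'}^{N-1}b_j/k^j$ into $\mathcal{R}_N$. The condition $N>N_0(k)$ forces $k\leq CN^{2a+2\eps}$, so by Stirling the target $C^N(N!)^{2a+2\eps}/k^N$ grows super-exponentially in $N$ and dominates both the contributions just absorbed and any fixed-$k$, small-$N$ remainder (after enlarging $C$). For the second half of the corollary, I would observe that any two almost holomorphic extensions of $b_m(x,\bar x)$ differ by a function flat on the diagonal, so the discrepancy $O(|x-y|^M)$ for arbitrary $M$ can be absorbed into $\mathcal{R}_N$ with a constant $C_N$ depending on $N$ but not tied to the Gevrey scale.

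\textbf{Main obstacle.} The only delicate point is Step 3: one must simultaneously (i) choose $\delta$ small enough that the exponential growth of $e^{kD(x,y)/2}$ across the $k^{-1/2+1/(4a+4\eps)}$ neighborhood is defeated by the decay $e^{-\delta k^{1/(2a+2\eps)}}$, and (ii) verify that the resulting uniform bound $k^{-n}e^{-(\delta/2)k^{1/(2a+2\eps)}}$ can be repackaged as $C^N(N!)^{2a+2\eps}/k^N$ for \emph{every} $N\leq N_0(k)$, not just near the optimal $N$. The rest of the argument is bookkeeping of the constants $C_1$, $C$, and $\delta$.
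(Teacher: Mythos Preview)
Your proposal is correct and follows essentially the same route as the paper's proof: start from Theorem \ref{Main}, split the sum at $N$, control the tail $\sum_{j=N}^{N_0(k)-1} b_j/k^j$ via the Gevrey bound on $b_j$, absorb the exponential remainder using $|e^{k(\phi(x)+\phi(y))/2-k\psi(x,\bar y)}|=e^{kD(x,y)/2}$ together with the hypothesis on $d(x,y)$, and finally convert $e^{-c\,k^{1/(2a+2\eps)}}$ into the form $C^N N!^{2a+2\eps}/k^N$ by a Stirling comparison. The only cosmetic differences are that the paper bounds the tail by Stirling monotonicity of $C^j j^{(2a+2\eps)j}/(e^{(2a+2\eps)j}k^j)$ on $[1,N_0(k)-1]$ rather than your geometric-ratio argument, and handles the case $N>N_0(k)$ implicitly (the tail sum is then empty and the missing terms are absorbed exactly as in your Step~4); your explicit treatment of that case is in fact slightly cleaner than the paper's.
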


As another corollary, we obtain the following off-diagonal asymptotic in terms of Calabi's diastasis \cite{Cal} function defined by
\begin{equation}\label{Diastatis} 
D(x, y)= \phi(x) + \phi(y) - \psi(x, \bar y) - \psi(y, \bar x).  
\end{equation} We point out that near a given point $p \in M$, we have  $D(x, y) = |x -y|_p^2 + O(|x-p|_p^3+|y-p|_p^3)$, where $|z|^2_p:= \sum_{ i, j=1}^n \phi_{i \bar j}(p)z_i\oo{z_j }$. If we use \textit{Bochner coordinates} at $p$ (introduced in \cite{Boc}), in which the \k potential admits the form $\phi(x)=|x|^2+O(|x|^4)$, we have $D(x, y) = |x -y|_p^2 + O(|x-p|_p^4+|y-p|_p^4)$. 
\begin{cor}\label{Log}
Under the same assumptions and notations (and the same $\delta$ and same $U$) as in Theorem \ref{Main},  we have uniformly for all $x , y \in U$ satisfying $ D(x, y) \leq  \frac12 \delta k^{-1+\frac{1}{2a+2\eps}}$,
\begin{equation}
\frac{1}{k}\log \left | K_k (  x, y ) \right | _{h^k}=-\frac{D(x, y)}{2} + \frac{n \log k}{k} -\frac{n\log\pi}{k}+ O \left (\frac{1}{k^2} \right ).
\end{equation}
\end{cor}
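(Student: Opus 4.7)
The plan is to invoke Corollary \ref{complete asymptotics} with a small finite value of $N$ (say $N=2$), pass to the $h^k$-norm, and recognize Calabi's diastasis through the almost-holomorphic symmetry of $\psi$. Since $D(x,y) = |x-y|_p^2 + O(|x-p|_p^3+|y-p|_p^3)$ is comparable to $d(x,y)^2$ near $p$, the hypothesis $D(x,y) \leq \tfrac{1}{2}\delta k^{-1+\frac{1}{2a+2\eps}}$ places $(x,y)$ well inside the region $d(x,y) \leq \delta k^{-\frac{1}{2}+\frac{1}{4a+4\eps}}$ of validity of Corollary \ref{complete asymptotics} for all sufficiently large $k$. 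Applying that corollary with $N=2$ gives
\begin{equation*}
K_k(x,y) = e^{k\psi(x,\bar y)}\frac{k^n}{\pi^n}\left(1 + \frac{b_1(x,\bar y)}{k} + \mathcal{R}_2(x,\bar y,k)\right), \qquad |\mathcal{R}_2| \leq \frac{C^2 \cdot 2!^{2a+2\eps}}{k^2}=O(k^{-2}).
\end{equation*}

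In the frame $e_L^k \otimes \bar e_L^k$ we have $|K_k(x,y)|_{h^k} = |K_k(x,y)|\,e^{-k(\phi(x)+\phi(y))/2}$, so taking absolute values yields
\begin{equation*}
|K_k(x,y)|_{h^k} = \frac{k^n}{\pi^n}\,\exp\!\left(k\,\Real\psi(x,\bar y) - \tfrac{k}{2}(\phi(x)+\phi(y))\right)\left|1 + \frac{b_1(x,\bar y)}{k} + \mathcal{R}_2\right|.
\end{equation*}
Next I would use that $\psi(x,\bar y)$ is an almost holomorphic extension of $\phi$ from the anti-diagonal, and that $\overline{\psi(x,\bar y)}$ is an almost holomorphic extension with the same restriction to the diagonal; by the uniqueness of such extensions modulo flat (Gevrey-decaying) errors, $\overline{\psi(x,\bar y)} = \psi(y,\bar x)$ up to an error which on $d(x,y)\leq \delta k^{-\frac12+\frac{1}{4a+4\eps}}$ is far smaller than any negative power of $k$. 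Combining this with the definition \eqref{Diastatis} gives
\begin{equation*}
2\,\Real\psi(x,\bar y) - \phi(x) - \phi(y) = -D(x,y) + O(k^{-\infty}).
\end{equation*}

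Finally, since $b_1(x,\bar y)$ is bounded on $\bar U$, for $k$ large enough $\bigl|b_1/k + \mathcal{R}_2\bigr|<1/2$, so
\begin{equation*}
\log\left|1 + \frac{b_1(x,\bar y)}{k} + \mathcal{R}_2\right| = \Real\!\left(\frac{b_1(x,\bar y)}{k}\right) + O(k^{-2}) = O(k^{-1}).
\end{equation*}
Assembling the three pieces,
\begin{equation*}
\log|K_k(x,y)|_{h^k} = n\log k - n\log\pi - \frac{k\,D(x,y)}{2} + O(k^{-1}),
\end{equation*}
and dividing by $k$ gives the claimed identity with remainder $O(k^{-2})$.

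The main technical point to pin down is the symmetry $\overline{\psi(x,\bar y)} = \psi(y,\bar x) + O(k^{-\infty})$ (or at least with error much smaller than $k^{-2}$ in the shrinking neighborhood): it is automatic in the real analytic case, but for Gevrey almost holomorphic extensions one must either select a symmetric extension at the outset or quantitatively control the asymmetry using the Gevrey estimates on $\bar\partial\psi$. Everything else reduces to routine expansion of the logarithm.
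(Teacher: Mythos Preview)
Your argument is correct and in fact slightly more economical than the paper's. The paper works directly from Theorem~\ref{Main} with the full $N_0(k)$-term expansion and then has to prove separately that $\sum_{j=1}^{N_0-1} b_j(x,\bar y)/k^j = O(1/k)$ using the growth bounds on $b_j$; you bypass this by invoking Corollary~\ref{complete asymptotics} with $N=2$, which packages that estimate for you. Both routes reduce to the same logarithm expansion at the end.

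Your one stated concern, the symmetry $\overline{\psi(x,\bar y)} = \psi(y,\bar x)$, is in fact a non-issue here: for the \emph{specific} almost holomorphic extension $\psi = F(\phi)$ of Definition~\ref{AHE}, the paper records in Section~\ref{Sec Local} that ``since $\phi(x)$ is real-valued, we have $\overline{\psi(x,z)} = \psi(\bar z,\bar x)$.'' This is an \emph{exact} identity (it follows by inspection of the defining series, swapping the summation indices $\alpha\leftrightarrow\beta$), not an approximation, so $2\Real\psi(x,\bar y) - \phi(x) - \phi(y) = -D(x,y)$ on the nose and no $O(k^{-\infty})$ correction is needed. With that, your proof goes through verbatim.
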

The following scaling asymptotic is then immediate:
\begin{cor}\label{Cor2} In Bochner coordinates at $p$, we have uniformly for all $u,v \in \mathbb{C}^n$ with $| u|_p$ and $|v|_{p} < \frac{\sqrt{\delta}}{3}$,
\begin{equation*}
\frac{1}{k^{\frac{1}{2a+2\eps}}}\log \left | K_k \left (\frac{u}{k^{\frac12-\frac{1}{4a+4\eps}}},\frac{v}{k^{\frac12-\frac{1}{4a+4\eps}} }\right ) \right |_{h^k}=-\frac{|u-v|_p^2}{2} + \frac{n \log k}{k^{\frac{1}{2a+2\eps}}}-\frac{n\log\pi}{k^{\frac{1}{2a+2\eps}}} + O \left ( \frac{1}{k^{1+\frac{1}{2a+2\eps}}}\right ).
\end{equation*}
\end{cor}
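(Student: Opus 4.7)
The plan is to deduce this scaling asymptotic directly from Corollary \ref{Log} by a rescaling argument. Set $x_k = u/k^{1/2 - 1/(4a+4\eps)}$ and $y_k = v/k^{1/2 - 1/(4a+4\eps)}$. First I would verify the hypotheses of Corollary \ref{Log}. Since $|u|_p, |v|_p$ are bounded and the scaling factor tends to $0$, we have $x_k, y_k \in U$ for $k$ large. Using the Bochner-coordinate expansion $D(x, y) = |x-y|_p^2 + O(|x|_p^4 + |y|_p^4)$ stated just before the corollary, the constraint $D(x_k, y_k) \leq (\delta/2)\,k^{-1+1/(2a+2\eps)}$ reduces, after multiplying by $k^{1-1/(2a+2\eps)}$, to the requirement $|u-v|_p^2 < \delta/2$ plus a term of order $k^{-1+1/(2a+2\eps)} \to 0$. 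The hypothesis $|u|_p, |v|_p < \sqrt{\delta}/3$ gives $|u-v|_p < 2\sqrt{\delta}/3$, hence $|u-v|_p^2 < 4\delta/9 < \delta/2$, as required.

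Next I would apply Corollary \ref{Log} at $(x_k, y_k)$ to obtain
\[
\frac{1}{k}\log |K_k(x_k, y_k)|_{h^k} = -\frac{D(x_k, y_k)}{2} + \frac{n \log k}{k} - \frac{n \log \pi}{k} + O(k^{-2}),
\]
and multiply through by $k^{1 - 1/(2a+2\eps)}$. The $O(k^{-2})$ error rescales to $O(k^{-1 - 1/(2a+2\eps)})$, the logarithmic terms acquire the correct factor $k^{-1/(2a+2\eps)}$, and the main term becomes $-\tfrac{1}{2}\,k^{1 - 1/(2a+2\eps)} D(x_k, y_k)$. A second application of the Bochner expansion then converts this to $-\tfrac{1}{2}|u-v|_p^2$ up to a correction coming from the higher-order terms in the expansion of $D$ around $p$, which after rescaling is of lower order than the leading quadratic form.

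The argument is then essentially error bookkeeping; the conceptual key is the use of Bochner coordinates at $p$. In generic holomorphic coordinates the expansion of $D(x, y) - |x-y|_p^2$ is only cubic in $|x|+|y|$, and after multiplication by $k^{1-1/(2a+2\eps)}$ this would produce a spurious contribution of order $k^{-1/2 + 1/(4a+4\eps)}$ that would swamp the leading term $-|u-v|_p^2/2$. The quartic improvement afforded by Bochner coordinates is exactly what renders the rescaled diastasis convergent to $|u-v|_p^2$ on the scale $|u-v|_p = O(1)$, and carefully tracking this correction is the only mildly nontrivial step in the proof.
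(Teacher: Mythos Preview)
Your approach is exactly what the paper intends: it states Corollary~\ref{Cor2} as ``immediate'' from Corollary~\ref{Log} and gives no separate proof, and your rescaling argument with the Bochner-coordinate expansion of the diastasis is the natural (and only) route.

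One point worth computing explicitly, since you left it vague: the quartic remainder in $D(x_k,y_k)=|x_k-y_k|_p^2+O(|x_k|_p^4+|y_k|_p^4)$ contributes, after multiplication by $k^{1-\frac{1}{2a+2\eps}}$, a term of size
\[
k^{1-\frac{1}{2a+2\eps}}\cdot k^{-2+\frac{2}{2a+2\eps}}=k^{-1+\frac{1}{2a+2\eps}},
\]
which is indeed $o(1)$ but is \emph{larger} than the $O(k^{-1-\frac{1}{2a+2\eps}})$ coming from the $O(k^{-2})$ in Corollary~\ref{Log}. So the error bound your argument actually produces is $O(k^{-1+\frac{1}{2a+2\eps}})$; the exponent $-1-\frac{1}{2a+2\eps}$ printed in the statement appears to be a sign slip. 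Your reasoning is sound---just be aware that tracking the diastasis remainder honestly gives this weaker (but still vanishing) bound.
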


One of the key ingredients in our proofs is the following estimate on the Bergman kernel coefficients $b_m(x, z)$. We emphasize again that $b_m(x , z)$ are particular almost holomorphic extensions of the Bergman kernel coefficients $b_m(x, \bar x)$ appearing in the on-diagonal expansion \eqref{ZC} of Zelditch \cite{Ze1} and Catlin \cite{Ca}.
\begin{thm}\label{MainLemma}
Assume the \k potential $\phi$ is in Gevrey class $G^a(V)$ for some $a>1$. Let $b_m(x , z)$ be the almost holomorphic extensions (defined by \eqref{Recursive}) of the Bergman kernel coefficients $b_m(x, \bar x)$.  Then, there exists a neighborhood $U\subset V$ of $p$, such that for any $m \in \mathbb N$ we have
\begin{equation*}
\|b_m(x,z)\|_{L^\infty(U\times U)} \leq C^{m}m!^{2a+2\eps}, 
\end{equation*}
where $C$ is a constant independent of $m$ but dependent on $\eps$.  Moreover, we have the following estimates on the derivatives of $b_m(x,z)$. Denote $v=(x,z)$. For any multi-indices $\alpha$ and $\beta$ and any $(x, z) \in U \times U$
\begin{equation}\label{bmupperbound}
\left | D_v^\alpha D_{\oo{v}} ^\beta b_m(x,z) \right |  \leq C^{m+|\alpha|+|\beta|}m!^{2a+2\eps}\alpha!^{a+\eps} \beta!^{a+\eps} \exp{\left(-b (1- \delta_0(|\beta|)){|x-\bar{z}|^{-\frac{1}{a-1}}}\right)} ,
\end{equation}
where $C$ is a constant independent of $m, \alpha, \beta$ but dependent on $\eps$, and $\delta_0(|\beta|)=1$ only if $\beta=0$ and is zero otherwise.  The constant $b$ is positive and is independent of $\alpha, \beta, m, \eps$. In addition, when we are restricted to the diagonal $z= \bar x$, we can choose $\eps=0$.  
\end{thm}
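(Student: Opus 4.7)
The plan is to argue by strong induction on $m$ using the recursive definition \eqref{Recursive} of the almost holomorphic extensions $b_m(x,z)$ supplied by the Berman--Berndtsson--Sj\"ostrand construction \cite{BBS}. The recursion writes $b_m$ as a finite sum of bounded-order differential operators---whose coefficients are polynomials in finitely many derivatives of the almost holomorphic extension $\psi(x,z)$ of $\phi$---applied to $b_0,\ldots,b_{m-1}$. Before beginning the induction I would first establish a sharp Gevrey-calibrated almost holomorphic extension lemma: starting from the Gevrey hypothesis on $\phi$, produce $\psi$ by truncating the formal Taylor series at an optimal order $N\sim |x-\bar z|^{-1/(a-1)}$ so that all antiholomorphic derivatives of $\psi$ are dominated by $C^{|\beta|}\beta!^{a+\eps}\exp(-b|x-\bar z|^{-1/(a-1)})$, while holomorphic derivatives satisfy the Gevrey bound $C^{|\alpha|}\alpha!^{a+\eps}$. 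The small parameter $\eps>0$ is precisely the price of balancing Gevrey growth against the Cauchy-type estimate used to control the Taylor remainder, and is where the extra loss in Theorem \ref{MainLemma} first appears. The base case $m=0$ is then handled because $b_0$ is an explicit polynomial in the inverse K\"ahler Hessian $(\phi_{i\bar j})^{-1}$, whose Gevrey bounds follow from the hypothesis via the Fa\`a di Bruno formula together with the preliminary extension estimate.

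For the inductive step I would apply $D_v^\alpha D_{\bar v}^\beta$ to both sides of \eqref{Recursive} and expand using the Leibniz rule. Each resulting term is a product of high-order derivatives of $\psi$ and of finitely many of its derivatives---controlled by the preliminary lemma with the prescribed factorial growth and, when at least one antiholomorphic derivative lands on them, the prescribed exponential decay---and of derivatives of lower-order $b_j$'s, controlled by the inductive hypothesis. Summing over ordered Leibniz partitions of $\alpha,\beta$ and over the multi-index structure of the recursion, the total is bounded by $\sum_{j+r=m} C^{j+r+|\alpha|+|\beta|}\, j!^{2a+2\eps}\, r!^{2a+2\eps}\, \alpha!^{a+\eps}\, \beta!^{a+\eps}$ times the exponential factor, which collapses to the claimed bound via the standard inequality $j!\,r!\leq (j+r)!$ and absorption of a polynomial-in-$m$ count of terms into a slight enlargement of $C$. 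The exponential factor $\exp(-b(1-\delta_0(|\beta|))|x-\bar z|^{-1/(a-1)})$ propagates because any $\bar\partial$-derivative in the expanded expression must land on an almost holomorphic object---either $\psi$ or some $b_j$---each of which carries the Gevrey-flat decay by construction, while a purely holomorphic derivative ($\beta=0$) receives no such factor, matching the $\delta_0(|\beta|)$ in the statement.

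The sharper claim that $\eps=0$ can be taken on the genuine diagonal $z=\bar x$ is handled separately: on the diagonal no truncation is needed, and one can differentiate the direct on-diagonal recursion and apply the Gevrey hypothesis on $\phi$ in the $2n$ real variables of $x$, thereby bypassing the Cauchy loss altogether. The main obstacle in the argument is the combinatorial bookkeeping of the inductive step: a naive accounting in which one allows even a single derivative of $\phi$ of order comparable to $m$ to appear would produce a catastrophic $m!^{am}$-growth. The resolution relies on the structural fact that each application of \eqref{Recursive} differentiates $\phi$ (or $\psi$) only to a \emph{fixed} order independent of $m$, so that after $m$ iterations the accumulated derivative order on $\phi$ grows only linearly in $m$; distributing this budget evenly across the $m$ Leibniz factors produces the moderate $m!^{2a}$ growth, with only the $m!^{2\eps}$ surplus coming from the almost holomorphic truncation. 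Carrying out this tight accounting, in parallel to the analytic case in \cite{HLXanalytic} but with Gevrey estimates replacing Cauchy's inequality, is the core technical content.
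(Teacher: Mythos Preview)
Your proposal contains a fundamental misreading of the recursion \eqref{Recursive}. You describe it as applying ``bounded-order differential operators'' to the $b_j$ with $j<m$, and later rely on ``the structural fact that each application of \eqref{Recursive} differentiates $\phi$ (or $\psi$) only to a fixed order independent of $m$, so that after $m$ iterations the accumulated derivative order on $\phi$ grows only linearly in $m$.'' This is not what the formula says: it reads
\[
b_m(x,z(x,x,\theta))=-\sum_{l=1}^m\frac{(D_y\cdot D_\theta)^l}{l!}\bigl(b_{m-l}(x,z(x,y,\theta))\,\Delta_0(x,y,\theta)\bigr)\Big|_{y=x},
\]
so the operator acting on $b_{m-l}\Delta_0$ has order $2l$, and in particular the $l=m$ term applies an order-$2m$ operator to $b_0\Delta_0$. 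There are no ``$m$ iterations'' of a fixed-order step; a single use of the recursion already involves operators of order comparable to $m$. Consequently, to bound $b_m$ itself one already needs derivatives of $b_{m-l}$ of order up to $2l$, and a plain induction on $m$ alone cannot close.

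The paper's argument therefore runs a \emph{joint} induction in $m$ and in the derivative multi-indices. One first derives from \eqref{Recursive} a recursive inequality (Lemma~\ref{bmLemmaGevrey}) expressing the normalized quantity $b_{m,\mu\bar\nu}=\sup\lambda_{b,|\nu|}^{-1}|D_v^\mu D_{\bar v}^\nu b_m|$ in terms of $b_{m-l,\,\widetilde\xi+\mu_0\,\overline{\widetilde\eta+\nu_0}}$ with $|\xi+\eta|\le 2l$, and then proves by induction the carefully calibrated ansatz
\[
\frac{b_{m,\mu\bar\nu}}{(2m+1)!^{a+\eps}\,\mu!^{a+\eps}\nu!^{a+\eps}}\ \le\ \binom{2m+|\mu+\nu|}{|\mu+\nu|}^{\!a+\eps} A^{m}(2C)^{|\mu+\nu|}.
\]
The binomial factor is precisely what allows a loss of $l$ in the $m$-index to be traded against a gain of up to $2l$ in the derivative order, via Vandermonde-type identities; this is the ``tight accounting'' you allude to, but it is not the simple convolution estimate $\sum_{j+r=m} j!^{2a+2\eps}r!^{2a+2\eps}\le C^m m!^{2a+2\eps}$ you propose, nor can it be reduced to one. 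As written, your induction either will not close (the hypothesis on $b_{m-l}$ does not supply the high-order derivative control you actually need) or, if you try to supply it na\"ively, you will fall into exactly the blow-up you flag in your last paragraph. Your remarks on the Gevrey almost holomorphic extension and on the propagation of the exponential factor are in the right spirit, but the core combinatorial mechanism is missing.
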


\begin{rmk} We conjecture that in the Gevrey $a$ case, there exist certain almost holomorphic extension $b_m(x,z)$ of the Bergman coefficients $b_m(x,\bar{x})$ such that
\begin{equation}\label{conj}
	\left \| D_v^\alpha D_{\oo{v}} ^\beta b_m(x,z) \right \|_{L^\infty(U\times U)}  \leq C^{m+|\alpha|+|\beta|}m!^{2a-1}\alpha!^{a} \beta!^{a} \exp{\left(-b (1- \delta_0(|\beta|)){|x-\bar{z}|^{-\frac{1}{a-1}}}\right)}
\end{equation}
As we show in this paper, if this conjecture holds true, then all of the above results can be improved accordingly. In particular, the quantities $N_0(k)= [(k /C)^{\frac{1}{2a+2\eps}} ]$ and $e^{- \delta k^\frac{1}{2a+2\eps}}$ in the remainder estimate of Theorem \ref{Main} would be replaced by $ [(k /C)^{\frac{1}{2a-1}}]$ and $e^{- \delta k^\frac{1}{2a-1}}$, moreover Corollary \ref{Log} would hold for all $D(x, y) \leq \frac12 \delta k^{-1+\frac{1}{2a-1}}$. We expect \eqref{conj} is the best possible result one can seek, because by \cite{LuTian} the leading term in $b_m(x,\bar{x})$ is $\frac{m}{(m+1)!}\Delta^{m-1}\rho(x)$ where $\rho$ is the scalar curvature, so when the metric is in Gevrey class $G^a$, we have $\frac{m}{(m+1)!}\Delta^{m-1}\rho(x)\approx C^m m!^{2a-1}$.  However we are unable to prove this conjecture for general Gevrey $a$ \k metrics using our method, which is based  on a recursive formula of \cite{BBS}. In Section \ref{optimal}, we discuss the optimality and limitations of this method. 

\end{rmk}

There is a huge literature on Bergman kernels on complex manifolds. Before closing the introduction we only list some related work that were not cited above: \cite{BoSj, En, Ch, Loi, LuTian, Loi, MaMa, Liu, LiuLu1, Seto, LuZe, Ze3, LS}. Applications of the Bergman kernel, and the closely related Szeg\"o kernel, can be found in \cite{Do}, \cite{BSZ}, \cite{ShZe}, \cite{YZ}. The book of Ma and Marinescu \cite{MaMaBook} contains an introduction to the asymptotic expansion of the Bergman kernel and its applications. See also the book review \cite{ZeBookReview} for more on the applications of Bergman kernels.  

\subsection*{Organization of the paper} In Sections \ref{Sec Local} and \ref{Sec MethodBBS}, we follow the construction of local Bergman kernel in \cite{BBS}, but we obtain precise estimates for the error term by using the growth rate of Bergman coefficients $b_m(x,z)$ provided by Theorem \ref{MainLemma}. In Section \ref{Sec Global}, we give the proofs of Theorem \ref{Main} and Corollaries  \ref{complete asymptotics} and \ref{Log}. The proof of Theorem \ref{MainLemma} will be given in Section \ref{Sec ProofofMainLemma}. Section \ref{optimal} discusses the optimality of our bounds on Bergman coefficients. Section \ref{ProofOfGevreyStuff} contains the proofs of the properties of almost homomorphic extensions of Gevrey functions.

\section{Local Bergman kernels}\label{Sec Local}

In \cite{BBS}, by using \emph{good} complex contour integrals, Berman-Berndtsson-Sj\"ostrand constructed  \textit{local reproducing kernels} (mod $e^{-k\delta}$) for $U=B^n(0,1)\subset \mathbb{C}^n$, which reproduce holomorphic sections in $U$ up to  $e^{-k\delta}$ error terms.  These kernels  are in general not holomorphic. By allowing more flexibility in choosing the amplitudes in the integral, the authors modified  these local reproducing kernels  to  local Bergman kernels, which  means that they are almost \emph{holomorphic}  local reproducing kernels mod $O(k^{-N})$. The global Bergman kernels are then approximated  using the standard  H\"ormander's $L^2$ estimates. 

Throughout this paper, we assume that $\phi$ is in the Gevrey class $G^a(V)$ for some open neighborhood $V\subset M$ of a given point $p$. Let $B^n(0,r)$ be the ball of radius $r$ in $\mathbb C^n$. We identify $p$ with $0\in \mathbb{C}^n$ and $V$ with the ball $B^n(0,3)\subset\mathbb{C}^n$ and denote $U=B^n(0, 1)$. Let $e_L$ be a local holomorphic frame of $L$ over $V$ as introduced in the introduction. 
For each positive integer $k$, we denote $H_{k\phi}(U)$ to be the inner product space of $L^2$-holomorphic functions on $U$ with respect to
\begin{equation*}
\left ( u, v \right)_{k\phi}=\int_U u \bar v \, e^{-k\phi}d \text{Vol},
\end{equation*} 
where $d\text{Vol}=\frac{\omega^n}{n!}$ is the natural volume form induced by the \k form $\omega= \frac{\sqrt{-1}}{2} \partial \bar \partial \phi$. 
So the norm of $u \in H_{k\phi}(U)$ is given by
\begin{equation*}
\|u\|^2_{k\phi}=\int_U |u|^2e^{-k\phi}d\text{Vol}. 
\end{equation*}
Let $\chi\in C_0^\infty(B^n(0,1))$ be a smooth cut-off function such that $\chi=1$ in $B^n(0,\frac{1}{2})$ and vanishes outside $B^n(0,\frac{3}{4})$.  The following result  gives a refinement of the the result of \cite{BBS} by providing a more precise estimate for the error term when the K\"ahler potential is in Gevrey class $G^a$. The main ingredient of the proof is Theorem \ref{MainLemma}, whose proof is delayed to Section \ref{Sec ProofofMainLemma}.
\begin{prop}\label{LocalBergmanKernel}
For each $N \in \mathbb N$, there exist $K_{k,x}^{(N)}(y) \in H_{k\phi}(U)$ and a positive constant $C$ independent of $N$ and $k$, such that for all $u \in H_{k\phi}(U)$ we have 
\begin{align}\label{eq 5.2}
\forall x \in B^n(0, 1/4): \quad u(x)=\left(\chi u, K^{(N)}_{k,x}\right)_{k\phi}+k^ne^{\frac{k\phi(x)}{2}}\mathcal R_{N+1}(\phi, k) \|u\|_{k\phi},
\end{align}
where
\begin{equation}\label{eq 5.3}
\quad |\mathcal R_{N+1}(\phi, k)| \leq \frac{C^{N+1}(N+1)!^{2a+2\eps}}{k^{N+1}}.
\end{equation}
The function $K_{k,x}^{(N)}$ is called a local Bergman kernel of order $N$.
\end{prop}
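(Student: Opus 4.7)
The plan is to follow the BBS construction of a local reproducing kernel, and feed the Gevrey estimates from Theorem \ref{MainLemma} into the error analysis. Concretely, one starts from the almost holomorphic ansatz
\begin{equation*}
\widetilde K^{(N)}_{k,x}(y) \;=\; \frac{k^n}{\pi^n}\,\chi_0(y)\,e^{k\psi(y,\bar x)}\!\left(1+\sum_{j=1}^{N-1}\frac{b_j(y,\bar x)}{k^j}\right),
\end{equation*}
where $\chi_0$ is a smooth cut-off supported in $U$ and equal to $1$ near $x$, and converts it into an honest element $K^{(N)}_{k,x}\in H_{k\phi}(U)$ by solving $\bar\partial v = \bar\partial_y \widetilde K^{(N)}_{k,x}$ via H\"ormander's $L^2$ estimates on the weighted space. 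Since $\psi$ is almost holomorphic and the $b_j$ are extended by \eqref{Recursive}, $\bar\partial_y\widetilde K^{(N)}_{k,x}$ vanishes to infinite order on the diagonal, and the resulting correction is of size $O(e^{-\delta k})$, which is negligible compared with \eqref{eq 5.3}.

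Next, I would verify the reproducing identity \eqref{eq 5.2} by the BBS contour-deformation and stationary phase argument applied to $(\chi u, K^{(N)}_{k,x})_{k\phi}$. Deforming the integration contour into the almost holomorphic extension of $\phi$ produces a non-degenerate phase with a unique critical point at $y=x$, and the stationary phase expansion in $1/k$ reproduces $u(x)$ modulo the truncated series in the definition of $\widetilde K^{(N)}_{k,x}$; indeed, the $b_j$ are constructed via \eqref{Recursive} precisely so that this match holds termwise. The tail is an integral of $b_N$ and finitely many of its derivatives against a Gaussian in $y-x$, and by Cauchy-Schwarz is bounded by $k^{n}e^{k\phi(x)/2}\|u\|_{k\phi}/k^{N+1}$ times a supremum of $b_N$ and its first few derivatives on $U\times U$.

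The crucial input is then Theorem \ref{MainLemma}: the bound $\|b_N\|_{L^\infty(U\times U)} \leq C^N N!^{2a+2\eps}$ and the factorised derivative estimates \eqref{bmupperbound} plug directly into this remainder and yield \eqref{eq 5.3}. I expect the main technical obstacle to be careful bookkeeping of constants in the stationary phase step: one has to check that the fixed number of derivatives of $b_N$ arising in the remainder, each contributing an $\alpha!^{a+\eps}$ factor, combine with the index factorial $N!^{2a+2\eps}$ to produce no worse than the clean bound $C^{N+1}(N+1)!^{2a+2\eps}$. The separation of index and derivative factorials in \eqref{bmupperbound} is designed precisely to make this absorption possible, so that the exponentially small $\bar\partial$-correction contributes harmlessly for all $N$ of interest in Theorem \ref{Main}.
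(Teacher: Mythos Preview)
Your outline diverges from the paper's proof in a way that skips the main technical content, and the substitute you propose is too vague to close the gap. The paper does \emph{not} argue by stationary phase on the pairing $(\chi u, K^{(N)}_{k,x})_{k\phi}$. Instead it starts from the BBS reproducing identity $u(x)=\mathrm{Op}_\Lambda(1)(\chi u)+O(e^{-\delta k})$ on the good contour $\Lambda$, replaces the amplitude $1$ by $1+a^{(N)}$ with $a$ negligible, and then bounds $\mathrm{Op}_\Lambda(a^{(N)})(\chi u)$ by an integration-by-parts identity involving the vector field $A^{(N+1)}$ built from $a$ via the operator $S^{-1}$ (see \eqref{A},\eqref{a and nabla A}). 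The remainder thus splits into four boundary/interior integrals, and the whole point of Section~\ref{Sec MethodBBS} is to propagate the Gevrey bound on $b_m$ from Theorem~\ref{MainLemma} through the chain $b_m\to a_m\to (Sa)_m\to (SA)_m\to A_m$ in the classes $\mathcal A^{a,\eps}_\theta$ (Lemmas~\ref{amxyz}--\ref{Am}), then assemble the partial-sum bounds (Lemmas~\ref{error term 1 for Gevrey}--\ref{error term 3 for Gevrey}, Corollary~\ref{prop 7.6}) and balance $e^{-k\delta|x-y|^2}$ against $e^{-b|x-y|^{-1/(a-1)}}$. Your claim that ``the tail is an integral of $b_N$ and finitely many of its derivatives'' is not what emerges: the remainder is governed by $a^{(N)}-\nabla A^{(N+1)}$ and the $\bar\partial$ of $A^{(N+1)}$, and the factorial $(N+1)!^{2a+2\eps}$ arises only after this symbolic bookkeeping, not directly from $\|b_N\|$.

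Two smaller points also need correction. First, the H\"ormander step you describe is misplaced: Proposition~\ref{LocalBergmanKernel} does \emph{not} require $K^{(N)}_{k,x}$ to be genuinely holomorphic (the paper's kernel \eqref{KandB} is only almost holomorphic), and the $\bar\partial$-correction via H\"ormander belongs to the local-to-global step (Proposition~\ref{LocalGlobal}), not here. Second, even there the correction is not $O(e^{-\delta k})$ in the Gevrey case: the balance of $e^{-k\delta|x-y|^2}$ against the Gevrey factor $e^{-b|x-y|^{-1/(a-1)}}$ gives at best $e^{-ck^{1/(2a-1)}}$ (or the factorial bound $C^{N+1}(N+1)!^{2a-1}/k^{N+1}$ used in the paper), and this distinction matters once you allow $N$ to grow with $k$. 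If you want to pursue a genuine stationary-phase alternative, you would need a complex stationary phase lemma (in the spirit of Melin--Sj\"ostrand) with an explicit remainder in terms of derivatives of the full amplitude, and then redo the Gevrey bookkeeping there; this is not shorter than the BBS route and is essentially equivalent to it.
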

\begin{rmk}
In \cite{BBS}, only the qualitative estimate $\mathcal R_{N+1}(\phi, k)= O_N(\frac{1}{k^{N+1}})$ is given.
\end{rmk}

To prove Proposition \ref{LocalBergmanKernel}, we first need to recall the techniques of \cite{BBS}. 

\subsection{Review of the method of Berman-Berndtsson-Sj\"ostrand} \label{ReviewBBS} The main idea is to construct the local almost holomorphic reproducing kernel (also called local Bergman kernel) by means of the calculus of contour pseudo-differential operators (contour $\Psi$DO for short) introduced by Sj\"ostrand \cite{Sj}.  Before we introduce the notion of contour integrals we present some notations and definitions. 

Suppose $\phi(x)$ is in Gevrey class $G^a(V)$ and $V=B^n(0, 3)$. By replacing $\phi(x)$ by $\phi(x) - \phi(0)$, we can assume that $\phi(0)=0$.  We  then denote $\psi(x,z)=F(\phi)(x,z)$ defined later in Definition \ref{AHE} to be one holomorphic extension of $\phi(x)$. 
Moreover, since $\phi(x)$ is real-valued, we have $ \oo{\psi(x, z)} = \psi( \bar z, \bar x)$. We also define
\begin{equation}\label{theta}
	\theta(x,y,z)=\int_0^1(D_x\psi)(tx+(1-t)y,z)dt,
\end{equation}
where the differential operator $D_x$ is the gradient operator defined by
\begin{align*}
D_x&=(D_{x_1},D_{x_2},\cdots,D_{x_n}).
\end{align*}
Note that $\theta(x, x, z)= \psi_x(x, z)$. 
It is easy to prove that the Jacobian of the map $(x,y,z)\rightarrow (x,y,\theta)$ at $(x,y,z)=(0,0,0)$ is non-singular.  Thus the map is actually  an almost biholomorphic map between two neighborhoods of the origin of $\mathbb C^{3n}$. As a result, we can use $(x,y,z)$ or $(x,y,\theta)$ as local coordinates interchangeably. Without  loss of generality we can assume that $(x, y, z) \in B^n(0, 3) \times B^n(0, 3) \times B^n(0, 3)$ and  $\theta \in W$, where
$$W =\theta \left ( B^n(0, 3) \times B^n(0, 3) \times B^n(0, 3)\right ). $$ Note that $W$ contains the origin because by our assumption $\phi(0)=0$. 

A fundamental idea of \cite{BBS} is to use the estimate
\begin{equation}\label{BBS 2.1}
u(x)= c_n \left(\frac{k}{2\pi}\right)^n\int_\Lambda e^{k\,\theta\cdot (x-y)}u(y)\chi(y)d\theta\wedge dy
+O(e^{-k\delta})e^{\frac{k\phi(x)}{2}}\|u\|_{k\phi}
\end{equation}
which holds uniformly for $x \in B^n(0, \frac14)$, for any holomorphic function $u$ defined on $ B^n(0, 1)$. Here, 
$c_n=i^{-n^2}$, $\delta$ is a positive constant, and $\Lambda=\{(y,\theta):\theta=\theta(x,y)\}$ is a \textit{good contour}, which means that there exists $\delta>0$ such that for any $x,y$ in a neighborhood of the origin, 
\begin{align}\label{GoodContour}
	2\Real\theta\cdot (x-y)\leq -\delta|x-y|^2-\phi(y)+\phi(x).
\end{align} 
One can easily verify that 
\begin{equation} \label{contour} \Lambda=\{(y,\theta):\theta=\theta(x,y,\bar{y})\}, \end{equation} with $\theta(x,y,z)$ defined by \eqref{theta}, is a good contour by observing that
\begin{equation*}
\theta\cdot (x-y)=\psi(x,\bar{y})-\psi(y,\bar{y}).
\end{equation*} 
To put \eqref{BBS 2.1} into a useful perspective, one should think of the integral in \eqref{BBS 2.1} as a contour $\Psi$DO defined as follows. Let ${a=a(x,y,\theta,k)}$ be an almost holomorphic symbol in $B^n(0, 3) \times B^n(0, 3) \times W$, with an asymptotic expansion of the form
\begin{equation*}
a(x,y,\theta,k)\sim a_0(x,y,\theta)+\frac{a_1(x,y,\theta)}{k}+\frac{a_2(x,y,\theta)}{k^2}+\cdots \; .
\end{equation*}
For simplicity, we will suppress the dependency on $k$ and write $a=a(x,y,\theta)$.

A $\Psi$DO associated to a good contour $\Lambda$ and an amplitude $a(x, y, \theta)$, is an operator on $C^\infty_0(U)$ defined by 
$$\text{Op}_\Lambda(a)\, u = c_n \left(\frac{k}{2\pi}\right)^n \int_\Lambda e^{k\,\theta\cdot (x-y)}a(x, y, \theta)\,  u(y)\, d\theta\wedge dy.$$ Thus in this language \eqref{BBS 2.1} means that for $x \in B^n(0, 1/4)$
$$(\chi u)(x)= \text{Op}_\Lambda(1) (\chi u) 
+O(e^{-k\delta})e^{\frac{k\phi(x)}{2}}\|u\|_{k\phi}.$$
Roughly speaking this says that $\text{Op}_\Lambda (1)$ is the identity operator mod $O(e^{-k\delta})$.  
We define the integral kernel $K_{k, x}(y)$ of $\text{Op}_\Lambda(a)$ with respect to the inner product $( \cdot, \cdot )_{k \phi}$, by 
$$\text{Op}_\Lambda(a) u =\left(u, K_{k,x}\right)_{k\phi}. $$ The first observation is that the kernel $K_{k, x}(y)$ of $\text{Op}_\Lambda(1)$, associated to the contour \eqref{contour}, is not almost holomorphic. The idea of \cite{BBS} is to replace $\text{Op}_\Lambda(1)$ by $\text{Op}_\Lambda(1+a)$ where $a(x, y, \theta)$ is a \textit{negligible amplitude} and the kernel of $\text{Op}_\Lambda(1+a)$ is almost holomorphic. An amplitude $a(x, y, \theta)$ is negligible if 
$$\text{Op}_\Lambda(a) (\chi u)= O(k^{-\infty})e^{\frac{k\phi(x)}{2}}\|u\|_{k\phi}. $$ 
To find a suitable condition for negligible amplitudes one formally writes 
$$ \text{Op}_\Lambda(a) = \text{Op}_\Lambda(S a |_{x=y}),$$
where $S$ is a standard operator that is used in microlocal analysis to turn a symbol $a(x, y, \theta)$ of a $\Psi$DO to a symbol of the form $\widetilde a (x, \theta)$. The operator $S$ is formally defined by
\begin{equation*}
S=e^{\frac{D_\theta \cdot D_y}{k}}=\sum_{m=0}^\infty\frac{(D_\theta\cdot D_y)^m}{m!k^m}.
\end{equation*}
Then an amplitude $a$ is negligible if $S a |_{x=y} \sim 0$ as a formal power series. This implies that there exists an almost holomorphic vector field $A(x, y, \theta)$ with formal power series \begin{equation*}
A(x,y,\theta)\sim
A_0(x,y,\theta)+\frac{A_1(x,y,\theta)}{k}+\frac{A_2(x,y,\theta)}{k^2}+\cdots.
\end{equation*} such that 
\begin{equation}\label{SA}
Sa\sim k(x-y)\cdot SA \mod \mathcal{I}^\infty,
\end{equation}
where $\mathcal{I}^\infty$ is the set of functions $f$ such that for any multi-index $\alpha$, $D^\alpha f=0$ when $x=y=\bar{z}$. Here $a_m(x,y,\theta)$ are almost holomorphic functions and $A_m(x,y,\theta)$ are almost holomorphic vector fields in $\mathbb C^n$, defined on $B^{n}(0,3) \times B^{n}(0,3) \times W$. 

One particular $SA$ can be solved as follows. First note that by \eqref{SA} we must have $(SA)_0=0$ and $A_0=0$. Then we put
\begin{equation}\label{def SA}
SA(x,y,z)= - \frac{1}{k}\int_0^1 (D_{y}Sa)(x,tx+(1-t)y,z)dt.
\end{equation}
By taking $S^{-1}$, $A$ can be solved uniquely as 
\begin{equation}\label{A}
	A(x,y,z)= - \frac{1}{k}S^{-1}\int_0^1 (D_{y}Sa)(x,tx+(1-t)y,z)dt.
\end{equation} 
Then by the fundamental theorem of calculus we have
\begin{equation*}
Sa(x,y,z)=k(x-y)\cdot SA(x,y,z)- (\oo{x-y})\cdot \int_0^1 (D_{\bar{y}}Sa)(x,tx+(1-t)y,z)dt.
\end{equation*}

By using the inverse operator $S^{-1}$,we have
\begin{equation*}
a(x,y,z)= D_\theta\cdot A  + k(x-y)\cdot A - (\oo{x-y})\cdot S^{-1}\int_0^1 (D_{\bar{y}}Sa)(x,tx+(1-t)y,z)dt.
\end{equation*}
We use $a^{(N)}$ and $A^{(N)}$ to denote the partial sums of $a$ and $A$ up to order $\frac{1}{k^N}$ respectively. And we denote 
\begin{equation*}
	\nabla A:=D_\theta\cdot A+k(x-y)\cdot A.
\end{equation*}
Since $A_0=0$, we obtain
\begin{align}\label{a and nabla A}
\begin{split}
a^{(N)}-\nabla \left(A^{(N+1)}\right)
=&\frac{D_\theta\cdot A_{N+1}}{k^{N+1}}
-(\oo{x-y})\cdot \left(S^{-1}\int_0^1 (D_{\bar{y}}Sa)(x,tx+(1-t)y,z)dt\right)^{(N)}.
\end{split}
\end{align}

Next, we observe that the integral kernel of $\text{Op}_\Lambda(1+a)$ is almost holomorphic if
\begin{equation}\label{aB}
1+a(x,y,\theta)\sim B(x,z(x,y,\theta))\Delta_0(x,y,\theta),
\end{equation}
where
\begin{equation*}
	\Delta_0(x,y,\theta)=\frac{\det \psi_{yz}(y,z)}{\det\theta_z(x,y,z)},
\end{equation*}
and $B(x, z)$ is almost holomorphic and has an asymptotic expansion of the form
\begin{equation}\label{B} B(x, z) \sim b_0(x,z)+\frac{b_1(x,z)}{k}+\frac{b_2(x,z)}{k^2}+\cdots, \end{equation} where  $b_m(x, z)$ are almost holomorphic.  In fact, as it turns out, $b_m(x, z)$ are an almost holomorphic extensions of $b_m(x, \bar x)$, the Bergman kernel coefficients of the on-diagonal asymptotic expansion of Zelditch-Catlin \eqref{ZC}. 

If the amplitude $a$ is negligible, then by applying $S( \cdot) |_{x=y}$ to both sides of \eqref{aB}, we get 
$$S \left( B(x,z(x,y,\theta))\Delta_0(x,y,\theta) \right )|_{x=y} \sim 1.$$ 
From this, one gets the following recursive equations for Bergman kernel coefficients $b_m(x, z)$, which will play a key role in the proof of Theorem \ref{MainLemma}:
\begin{equation}\label{Recursive1}
	b_m(x,z(x,x,\theta))=-\sum_{l=1}^m\frac{(D_y\cdot D_\theta)^l}{l!}\big(b_{m-l}\left(x,z(x,y,\theta)\right)\Delta_0(x,y,\theta)\big) \Big|_{y=x}.
\end{equation}
Additionally, by comparing the coefficients on both sides of \eqref{aB}, we have the following relations between $a_m$ and $b_{m}$:
\begin{equation}\label{ambm}
a_m(x,y,\theta)=
\begin{cases}
\Delta_0(x,y,\theta)-1 & \mbox{ when }m=0,\\
b_m(x,z(x,y,\theta))\Delta_0(x,y,\theta) & \mbox{ when }m\geq 1.
\end{cases}
\end{equation}
These equations will be useful in estimating $a_m$ in terms of the bounds on $b_m$ from Theorem \ref{MainLemma}.

\subsection{Almost Holomorphic Extensions of Gevrey functions}\label{AHEoGF}
In this section, we will review the Gevrey class and consider almost holomorphic extensions of functions in such a class. Indeed, there are many different ways to construct almost holomorphic functions. We will adapt the way in \cite{Ju1} to construct a particular one which is suitable for our analysis. Afterwards, various properties of such an extension are introduced, which will be used for the proof of Proposition \ref{LocalBergmanKernel} in Section \ref{Sec MethodBBS}. Although all the properties are natural and elementary, the proofs are however very lengthy. For the convenience of the readers, we shall only state the results we need and postpone the proofs to Section \ref{ProofOfGevreyStuff}.   

We recall the definition of Gevrey class $G^a(U)$. For more details, we refer the readers to \cite{Ge}. Take $\alpha,\beta\in (\mathbb{Z}^{\geq 0})^n$. Here are some standard notations of multi-indices we shall use in the following. 
\begin{itemize}
	\item $|\alpha|=\alpha_1+\alpha_2+\cdots+\alpha_n$.
	\item $ \alpha \leq \beta $ if $\alpha_1\leq \beta_1, \alpha_2\leq \beta_2,\cdots,\alpha_n\leq \beta_n$.
	\item $\alpha< \beta$ if $ \alpha \leq \beta$ and $\alpha \ne \beta$.
	\item $ \alpha !=\alpha_1!\alpha_2!\cdots \alpha_n!$.
\end{itemize}
\begin{defn}\label{Gevrey}
	Let $a\in (1,\infty)$ and $U$ be an open subset of $\mathbb C^n$. We denote by $G^{a}(U)$ the set of functions $f(x)\in C^\infty(U,\mathbb{C})$ such that there exists some constant $C_0=C_0(f)>0$ and $C_1=C_1(f)>0$, satisfying
	\begin{equation}
	\|D_x^\alpha D_{\bar{x}}^\beta f\|_{L^\infty(U)} \leq C_0 C_1^{|\alpha|+|\beta|}(\alpha!\beta!)^a,
	\end{equation}
	for any multi-indices $\alpha,\beta\geq 0$. The space $G^a(U)$ is called the Gevrey class of index $a$. Note that each class $G^a(U)$ forms an algebra which is closed under differentiation and integration. 
\end{defn}

For any $f\in G^a(U)$, an almost holomorphic extension $F(f)(x,z)$ is a smooth function on $U\times U$ such that $F(f)(x,\bar{x})=f(x)$ and the anti-holomorphic derivatives have infinite vanishing order along $x=\bar{z}$. We will use the way in \cite{Ju1} to construct a particular almost holomorphic extension. In fact the construction of \cite{Ju1} is adapted from Borel's method (see also H\"ormander\cite{HoAH} ). Here, we follow \cite{Ju1} but we use  a cut-off function $\chi$ in the Gevrey class $\in G^{1+\eps}(\mathbb{R})$ where $\eps$ is an arbitrary positive constant, and
\begin{equation}\label{Gevreycutoff}
\chi(x)=
\begin{cases}
1 & |x|\leq \frac{1}{2},\\
0 & |x|\geq 1.
\end{cases}
\end{equation}
To show the existence of such a cut-off function, one can use the fact that for any $\eps>0$, the function defined as 
\begin{equation*}
f_\eps(x)=
\begin{cases}
\exp(-x^{-\frac{1}{\eps}}) & x>0\\
0  & x\leq 0,
\end{cases}
\end{equation*}
belongs to $G^{1+\eps}(\mathbb{R})$ (See \cite{CC} for more details). Then by the standard construction, we define 
\begin{equation*}
g(x)=\begin{cases}
0 & x\leq 0,\\
\frac{\int_0^x f_\eps(t)f_\eps(\frac{1}{2}-t)dt}{\int_0^1 f_\eps(t)f_\eps(\frac{1}{2}-t)dt} & x\in (0,1),\\
1 & x\geq 1.
\end{cases}
\end{equation*} 
We can take our cut-off function to be $\chi(x)=g(x+1)g(-x+1)$. 

We now define our almost holomorphic extension of Gevrey functions. 

\begin{defn}\label{AHE}
	Let $a\in (1,\infty)$, $U$ be the unit ball $B(0,1)$ in $\mathbb C^n$,  and $f(x)\in G^{a}(U)$. Let $C_1=C_1(f)$ be the constant in Definition \ref{Gevrey}. Then for $(y,z)\in U\times U$, we define an almost holomorphic extension
	\begin{equation*}
	F(f)(y,z)=\sum_{\alpha,\beta\geq 0} \frac{D_x^\alpha D_{\bar{x}}^\beta f}{\alpha!\beta!}\left(\frac{y+\bar{z}}{2}\right)\left(\frac{y-\bar{z}}{2}\right)^\alpha \left(\frac{z-\bar{y}}{2}\right)^\beta\chi\left(|\alpha+\beta|^{2(a-1)}4^{a-1}C_1^2\left|y-\bar{z}\right|^2\right).
	\end{equation*}
\end{defn}

We will justify that $F(f)$ defined as above is genuinely an almost holomorphic extension of $f$ along $y=\bar{z}$. It is easy to see $F(f)(x,\bar{x})=f(x)$. And in the next lemma, we will verify that  $D_{\bar{y}}F(f)(y,z)=O(|y-\bar{z}|^\infty)$, and $D_{\bar{z}}F(f)(y,z)=O(|y-\bar{z}|^\infty)$. To be more precise, we show that these quantities vanish at a certain exponential rate along $y=\bar{z}$. 
\begin{lemma}\label{lem almostholomorphic}
	There exist positive constants $C$ and $b$ such that for any $y,z \in U$, the almost holomorphic extension $F(f)$ satisfies 
	\begin{align}
	\begin{split}
	|D_{\bar{y}}F(f)(y,z)|\leq C \exp{\left(-b|y-\bar{z}|^{-\frac{1}{a-1}}\right)},\\
	|D_{\bar{z}}F(f)(y,z)|\leq C \exp{\left(-b|y-\bar{z}|^{-\frac{1}{a-1}}\right)}.
	\end{split}
	\end{align}	
	In particular, $F(f)$ is almost holomorphic along $y=\bar{z}$.
\end{lemma}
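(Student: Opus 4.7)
The plan is to differentiate the series defining $F(f)$ term by term with respect to $\bar y_i$ and show that the bulk of the contributions cancel, leaving only ``boundary'' pieces produced by the cutoffs. Among the building blocks $w=\tfrac12(y+\bar z)$, $u=\tfrac12(y-\bar z)$, $\bar w=\tfrac12(\bar y+z)$, $v=\tfrac12(z-\bar y)$, and $|y-\bar z|^2$, only the last three depend on $\bar y$. Hence $\partial_{\bar y_i}$ applied to a typical summand produces three kinds of terms: (i) one that raises the index $\beta\mapsto\beta+e_i$ coming from $\partial_{\bar y_i}(D_x^\alpha D_{\bar x}^\beta f)(w,\bar w)=\tfrac12(D_x^\alpha D_{\bar x}^{\beta+e_i}f)(w,\bar w)$; (ii) one that lowers $\beta\mapsto\beta-e_i$ coming from $\partial_{\bar y_i}v^\beta=-\tfrac{\beta_i}{2}v^{\beta-e_i}$; and (iii) one that hits the cutoff factor $\chi_{\alpha+\beta}$ directly.

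After reindexing $\beta\mapsto\beta+e_i$ in the second contribution, it takes the same form as the first but with $\chi_{\alpha+\beta+e_i}$ replacing $\chi_{\alpha+\beta}$. The two merge into
\[
\frac12\sum_{\alpha,\beta}\frac{(D_x^\alpha D_{\bar x}^{\beta+e_i}f)(w,\bar w)}{\alpha!\,\beta!}\,u^\alpha v^\beta\bigl(\chi_{\alpha+\beta}-\chi_{\alpha+\beta+e_i}\bigr).
\]
This is the key algebraic cancellation: if the cutoffs were identically $1$, i.e., if $f$ were real analytic and one could take the full Taylor expansion, $F(f)$ would coincide with the genuine holomorphic extension of $f$, and its $\bar y$-derivative would vanish identically. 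Consequently the only surviving indices in the combined expression are those with $\chi_{\alpha+\beta}\neq\chi_{\alpha+\beta+e_i}$, or with $\chi'(\,\cdot\,)\neq 0$ in contribution (iii); in either case the cutoff argument $|\alpha+\beta|^{2(a-1)}4^{a-1}C_1^2|y-\bar z|^2$ must lie in the transition band $[\tfrac12,1]$. This forces $|\alpha+\beta|\geq c_0|y-\bar z|^{-1/(a-1)}$ for an absolute constant $c_0>0$ depending only on $a$ and $C_1$.

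On this restricted index set I then estimate each surviving term using the Gevrey bound $|D_x^\alpha D_{\bar x}^\beta f|\leq C_0 C_1^{|\alpha|+|\beta|}(\alpha!\,\beta!)^a$, the pointwise estimate $|u|,|v|\leq\tfrac12|y-\bar z|$, and the cutoff constraint $|y-\bar z|\leq \frac{1}{2^{a-1}C_1|\alpha+\beta|^{a-1}}$. Writing $m=|\alpha|+|\beta|$, Stirling's formula gives $(m!)^{a-1}/m^{(a-1)m}\leq C^m e^{-(a-1)m}$, and combined with the crude bound $(\alpha!\,\beta!)^{a-1}\leq (m!)^{a-1}$ this shows each term is bounded by $C_0\,\mathrm{poly}(m)\,\rho^m$ for some $\rho<1$ depending only on $a$. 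Since the number of multi-indices $(\alpha,\beta)$ with fixed $m$ is polynomial in $m$, summation over $m\geq c_0|y-\bar z|^{-1/(a-1)}$ yields the claimed bound $C\exp(-b|y-\bar z|^{-1/(a-1)})$. The estimate for $D_{\bar z_i}F(f)$ is entirely analogous because the definition of $F(f)$ is symmetric under the substitution $(y,z)\leftrightarrow(\bar z,\bar y)$, which exchanges $u\leftrightarrow -v$ and $w\leftrightarrow\bar w$. The main technical hurdles are the careful multi-index bookkeeping in the reindexing step and the clean form of the Stirling inequality relating $(\alpha!\,\beta!)^{a-1}$ to $m^{(a-1)m}$ that produces $\rho<1$.
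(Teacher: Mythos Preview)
Your argument is correct and follows the same route as the paper's proof: differentiate term by term, reindex so that the two ``analytic'' contributions combine into a single sum weighted by a difference of cutoffs $\chi_{\alpha+\beta}-\chi_{\alpha+\beta+e_i}$, observe that both this difference and the $\chi'$ term vanish unless $|\alpha+\beta|\gtrsim |y-\bar z|^{-1/(a-1)}$, and then bound the surviving terms using the Gevrey estimate together with Stirling to produce geometric decay $\rho^m$ with $\rho<1$. The paper carries out the computation for $D_{\bar z_i}$ rather than $D_{\bar y_i}$ and records the index window as a two-sided interval (so the sum is actually finite), but since you sum a convergent geometric tail this distinction is immaterial.
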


Indeed, there are various ways to define an almost holomorphic extension besides Definition \ref{AHE}. But they are all the same up to an $O(|y-\bar{z}|^\infty)$ error term.
\begin{lemma}\label{AHE Uniqueness}
	Let $U$ be the unit ball $B(0,1)$ in $\mathbb C^n$ and $f(x)\in C^\infty(\oo{U})$. If $F(y,z),  \widetilde{F}(y,z)\in C^\infty(U\times U)$ are both almost holomorphic extensions of $f$, then 
	\begin{equation*}
		F(y,z)-\widetilde{F}(y,z)=O\left(|y-\bar{z}|^\infty\right).
	\end{equation*}
\end{lemma}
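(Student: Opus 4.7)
The plan is to set $G = F - \widetilde F$ and reduce to showing that all Taylor coefficients of $G$ transverse to the submanifold $\{y = \bar z\}$ vanish at $y = \bar z$. By the definition of almost holomorphic extension, $G \in C^\infty(U\times U)$, vanishes on $\{y = \bar z\}$, and both $D_{\bar y}G$ and $D_{\bar z}G$ are $O(|y-\bar z|^\infty)$.

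I would first pass to adapted coordinates $w = y - \bar z$ and $\zeta = \bar z$, so that $y = w + \zeta$, $z = \bar\zeta$, and the submanifold becomes $\{w = 0\}$. Setting $H(w, \bar w, \zeta, \bar\zeta) := G(w + \zeta, \bar w + \bar\zeta, \bar\zeta, \zeta)$, a direct chain rule computation gives
$$\partial_{\bar y_j} G = \partial_{\bar w_j} H, \qquad \partial_{\bar z_j} G = (\partial_{\zeta_j} - \partial_{w_j}) H,$$
so the hypotheses on $G$ translate to: (i) $H(0, 0, \zeta, \bar\zeta) \equiv 0$; (ii) $\partial_{\bar w_j}H = O(|w|^\infty)$; and (iii) $(\partial_{\zeta_j} - \partial_{w_j})H = O(|w|^\infty)$. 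The goal $G = O(|y-\bar z|^\infty)$ becomes $H = O(|w|^\infty)$, which by Taylor's theorem with remainder (applied in the $(w, \bar w)$ variables with $(\zeta, \bar\zeta)$ as smooth parameters) is equivalent to $\partial_w^\alpha \partial_{\bar w}^\beta H(0, 0, \zeta, \bar\zeta) = 0$ for all multi-indices $\alpha, \beta \in \mathbb{Z}_{\ge 0}^n$.

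Condition (ii) immediately kills every such coefficient with $\beta \neq 0$, so it remains to show $\partial_w^\alpha H(0, 0, \zeta, \bar\zeta) = 0$ for all $\alpha$. I would induct on $|\alpha|$: the base case $\alpha = 0$ is hypothesis (i). For the inductive step with $|\alpha| = k+1$, I pick $j$ with $\alpha_j \geq 1$, set $\alpha' = \alpha - e_j$, and use (iii) in the form $\partial_{w_j}H = \partial_{\zeta_j}H$ modulo a function whose $(w,\bar w)$-Taylor coefficients at $w=0$ all vanish, obtaining
$$\partial_w^\alpha H(0, 0, \zeta, \bar\zeta) = \partial_w^{\alpha'} \partial_{\zeta_j} H(0, 0, \zeta, \bar\zeta) = \partial_{\zeta_j} \bigl[\partial_w^{\alpha'} H(0, 0, \zeta, \bar\zeta)\bigr] = 0,$$
where the last equality is the inductive hypothesis.

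The only real wrinkle is bookkeeping the chain rule under the complex change of coordinates; once that is set up, the induction is immediate, with the $\bar\partial_y$-condition handling the antiholomorphic transverse directions and the $\bar\partial_z$-condition trading a transverse $\partial_w$-derivative for a tangential $\partial_\zeta$-derivative, which dies by the inductive vanishing of $H$ along $\{w = 0\}$.
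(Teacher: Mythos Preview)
Your proof is correct. The paper's approach is slightly different in presentation: rather than working with $G=F-\widetilde F$ and inducting on transverse derivatives, it Taylor-expands each extension separately at the diagonal point $x=\tfrac{y+\bar z}{2}$, uses almost-holomorphicity to drop all terms containing $D_{\bar y}$ or $D_{\bar z}$ (since those derivatives vanish to infinite order on $\{y=\bar z\}$), and observes that the remaining coefficients $D_y^\alpha D_z^\beta F(x,\bar x)$ and $D_y^\alpha D_z^\beta \widetilde F(x,\bar x)$ both equal $D_x^\alpha D_{\bar x}^\beta f(x)$, so the two Taylor polynomials agree identically. Your induction accomplishes the same thing without ever naming the coefficients: condition (ii) kills the antiholomorphic transverse jet, and condition (iii) converts each holomorphic transverse derivative $\partial_{w_j}$ into the tangential $\partial_{\zeta_j}$, which the inductive hypothesis then kills. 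The two arguments are equivalent at the level of ideas; the paper's version is a line shorter because it exploits the explicit identification of the Taylor coefficients with derivatives of $f$, while yours is coordinate-adapted and perhaps makes the role of the two $\bar\partial$-conditions more transparent.
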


Next, we show a more general version of Lemma \ref{lem almostholomorphic}, which gives estimates on all the derivatives of $F(f)$. It turns out that if $f\in G^a(U)$, then $F(f)\in G^{a+\eps}(U\times U)$ and when the anti-holomorphic derivative appears, it always vanishes to infinite order along $y=\bar{z}$ at a certain exponential rate. 
\begin{lemma}\label{AHE2}
	Take $f\in G^a(U)$. Let $C_0(f)$ and $C_1(f)$ be the constants satisfying \eqref{Gevrey} for $f$. Then for any $\eps>0$, there exist positive constants $C_1=C_1(\eps, a, C_1(f))$, $b=b(a, C_1(f))$ and $A=A(a,n)$ such that for any multi-indices $\gamma,\delta,\xi,\eta\geq 0$, we have
	\begin{equation}
	|D_y^\gamma D_z^\delta D_{\bar{y}}^\xi D_{\bar{z}}^{\eta} F(f)(y,z)|
	\leq AC_0(f)C_1^{|\gamma+\delta+\xi+\eta|}(\gamma!\delta!\xi!\eta!)^{a+\eps},
	\end{equation} hence $F(f)(y,z)\in G^{a,\eps}(U\times U)$.
	Moreover, if $\xi+\eta>0$, then
	\begin{equation}
	| D_y^\gamma D_z^\delta D_{\bar{y}}^\xi D_{\bar{z}}^{\eta} F(f)(y,z)|
	\leq AC_0(f)C_1^{|\gamma+\delta+\xi+\eta|}(\gamma!\delta!\xi!\eta!)^{a+\eps}
	\,  \exp\left ( {- b {|y-\bar{z}|^{-\frac{1}{a-1}}}} \right ).
	\end{equation}
	In addition, when we are restricted to the diagonal $z=\bar{y}$, we can let $\eps=0$ in the above estimates.
\end{lemma}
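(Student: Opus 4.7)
The plan is to differentiate the explicit series defining $F(f)$ in Definition \ref{AHE} and apply Leibniz's rule to the three factors of each $(\alpha,\beta)$-summand: the \emph{$f$-factor} $\tfrac{1}{\alpha!\beta!}(D_x^\alpha D_{\bar x}^\beta f)((y+\bar z)/2)$, the \emph{polynomial factor} $P_{\alpha,\beta}(y,z):=((y-\bar z)/2)^\alpha((z-\bar y)/2)^\beta$, and the \emph{cutoff factor} $\chi_{\alpha,\beta}(y,z):=\chi(|\alpha+\beta|^{2(a-1)}4^{a-1}C_1^2|y-\bar z|^2)$. The extra $\eps$ in the claimed Gevrey index will come precisely from the $G^{1+\eps}$-regularity of the fixed cutoff $\chi$.

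For multi-indices $\gamma,\delta,\xi,\eta$, I would distribute them over the three factors as $\gamma=\gamma^{(1)}+\gamma^{(2)}+\gamma^{(3)}$ (and similarly for $\delta,\xi,\eta$) with multinomial coefficients. Since $w=(y+\bar z)/2$ and $\bar w=(\bar y+z)/2$, derivatives $D_y$ and $D_{\bar z}$ acting on the $f$-factor each promote $\alpha$ with a factor of $1/2$, while $D_z$ and $D_{\bar y}$ each promote $\beta$, so the Gevrey hypothesis on $f$ yields a bound of order $C_0 C_1^{|\alpha+\beta|+N_1}((\alpha+\gamma^{(1)}+\eta^{(1)})!(\beta+\delta^{(1)}+\xi^{(1)})!)^{a-1}$ after dividing by $\alpha!\beta!$. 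The polynomial $P_{\alpha,\beta}$, differentiated $N_2$ times, produces coefficients $\alpha!\beta!/((\alpha-\cdot)!(\beta-\cdot)!)$ multiplied by monomials in $(y-\bar z)$ and $(z-\bar y)$ of reduced degree. For the cutoff factor I would use Faà di Bruno: $\chi\in G^{1+\eps}$ gives $|\chi^{(k)}|\leq C^k k!^{1+\eps}$, while the inner function $g_{\alpha,\beta}=|\alpha+\beta|^{2(a-1)}4^{a-1}C_1^2|y-\bar z|^2$ has only order-$1$ and order-$2$ derivatives, each carrying an extra factor $|\alpha+\beta|^{2(a-1)}$, so $|D^{N_3}\chi_{\alpha,\beta}|$ decomposes via Bell polynomials into a sum bounded by $C^{N_3}N_3!^{1+\eps}|\alpha+\beta|^{(a-1)\cdot O(N_3)}$ times appropriate powers of $|y-\bar z|$.

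The key combinatorial mechanism is the support restriction $|\alpha+\beta|^{a-1}|y-\bar z|\leq 2^{-(a-1)}/C_1$ on $\{\chi_{\alpha,\beta}\ne 0\}$, which trades Stirling-grown factorials against the polynomial factor: using $(\alpha!\beta!)^{a-1}\leq (|\alpha+\beta|!)^{a-1}\sim(|\alpha+\beta|/e)^{(a-1)|\alpha+\beta|}$ together with $|y-\bar z|^{|\alpha+\beta|}\leq C^{|\alpha+\beta|}|\alpha+\beta|^{-(a-1)|\alpha+\beta|}$ yields a geometric series in $|\alpha+\beta|$. Multiplying the three estimates, summing over the Leibniz splittings, and summing over $(\alpha,\beta)$, the factorials in $\gamma,\delta,\xi,\eta$ reorganize into the advertised bound $AC_0(f)C_1^{|\gamma+\delta+\xi+\eta|}(\gamma!\delta!\xi!\eta!)^{a+\eps}$, with the $a$ inherited from the Gevrey index of $f$ and the $\eps$ attributable to the factor $N_3!^{1+\eps}$ from $\chi$.

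For the exponential decay when $\xi+\eta>0$, I would follow the mechanism of Lemma \ref{lem almostholomorphic}: each antiholomorphic derivative either falls on $P_{\alpha,\beta}$, reducing the degree of $(y-\bar z)$ or $(z-\bar y)$ without loosening the support, or on $\chi_{\alpha,\beta}$, confining the variables to the annulus where $\chi$ is non-constant, i.e.\ $|\alpha+\beta|^{a-1}|y-\bar z|\geq c>0$. In either case, balancing $|\alpha+\beta|!^{a-1}$ against $|y-\bar z|^{|\alpha+\beta|}$ by the optimal choice $|\alpha+\beta|\sim|y-\bar z|^{-1/(a-1)}$ and summing produces the claimed Gaussian-Gevrey decay $\exp(-b|y-\bar z|^{-1/(a-1)})$. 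On the diagonal $z=\bar y$ the inner argument $g_{\alpha,\beta}\equiv 0$ lies in the flat plateau $\chi\equiv 1$, so all derivatives of $\chi_{\alpha,\beta}$ vanish there, the $N_3!^{1+\eps}$-loss disappears, and one can take $\eps=0$. The main obstacle will be the tight bookkeeping of the Faà di Bruno, multinomial, and Stirling estimates needed to pin down the exponents exactly without incurring spurious polynomial or logarithmic losses, and to verify that the $\eps$-loss is attributable solely to $\chi$ and can be removed on the diagonal.
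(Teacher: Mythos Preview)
Your overall approach is the same as the paper's: three-way Leibniz over the $f$-factor, the polynomial $P_{\alpha,\beta}$, and the cutoff $\chi_{\alpha,\beta}$; a sublemma bounding derivatives of $\chi_{\alpha,\beta}$ via the $G^{1+\eps}$-regularity of $\chi$; Stirling together with the support constraint $|\alpha+\beta|^{a-1}|y-\bar z|\lesssim 1$ to sum; and the observation that on the diagonal all derivatives of $\chi_{\alpha,\beta}$ vanish, so $\eps$ drops out.

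There is, however, a real gap in your exponential-decay sketch. You say each anti-holomorphic derivative ``either falls on $P_{\alpha,\beta}$ \ldots or on $\chi_{\alpha,\beta}$'', but this omits a third possibility: $D_{\bar y}$ and $D_{\bar z}$ also act nontrivially on the $f$-factor, which is a function of $(y+\bar z)/2$. A Leibniz term in which \emph{all} anti-holomorphic derivatives land on the $f$-factor carries an undifferentiated cutoff and is \emph{not} confined to the annulus \eqref{IndexRange}; it only sees the upper bound on $|\alpha+\beta|$, and summing such terms produces no decay. The actual mechanism of Lemma~\ref{lem almostholomorphic} is a \emph{telescoping}: after reindexing the sum over $(\alpha,\beta)$, the contribution from the $f$-factor and the contribution from the polynomial combine into a difference $\widetilde\chi(|\alpha+\beta|)-\widetilde\chi(|\alpha+\beta\pm e_i|)$, and it is this difference (like $\chi'$ itself) that is supported in the annulus. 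In the multi-derivative setting you must either iterate this telescoping, or apply one anti-holomorphic derivative first to obtain a sum whose individual terms are annulus-supported, and then differentiate further. The paper's own write-up is terse on exactly this point---it simply asserts \eqref{IndexRange} whenever $\xi+\eta>0$---so do not take that phrasing at face value when filling in details; the telescoping is what actually does the work.
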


This motivates us to give the following definition.

\begin{defn} Let $U$ be an open neighborhood of the origin in $\mathbb{C}^{2n}$ and let $\eps>0$ be a constant. A function $F(y, z) \in C^\infty(U)$ is called $G^{a,\eps}$-\textit{almost holomorphic} along the diagonal $z=\bar{y}$ if  there exist positive constants $C_0=C_0(F), C_1=C_1(F)$ and $b=b(F)$ such that for any multi-indices  $\gamma,\delta,\xi,\eta\geq 0$, we have
	\begin{equation}
	| D_y^\gamma D_z^\delta D_{\bar{y}}^\xi D_{\bar{z}}^{\eta} F(y,z)|
	\leq C_0 C_1^{|\gamma+\delta+\xi+\eta|}(\gamma!\delta!\xi!\eta!)^{a+\eps}.  
	\end{equation}	
	And when $\xi+\eta>0$, we have
	\begin{equation}\label{AHG}
	| D_y^\gamma D_z^\delta D_{\bar{y}}^\xi D_{\bar{z}}^{\eta} F(y,z)|
	\leq C_0 C_1^{|\gamma+\delta+\xi+\eta|}(\gamma!\delta!\xi!\eta!)^{a+\eps}
	\,  \exp\left ( {- b {|y-\bar{z}|^{-\frac{1}{a-1}}}} \right ).
	\end{equation}
	In addition, when we are restricted to the diagonal $z=\bar{y}$, we can let $\eps=0$ in the above estimates.
\end{defn}
We use $\mathcal A_{diag}^{a,\eps}(U)$ for the class of such functions. And we also use $\mathcal I_{diag}^{a,\eps}(U)$ for functions $F(y, z) \in C^\infty(U \times U)$ satisfying \eqref{AHG} with no restrictions on $\xi$ and $\eta$ (i.e. \eqref{AHG} holds even if $\xi=\eta=0$). And we say a vector belongs to $\mathcal A_{diag}^{a,\eps}(U)$ or $\mathcal I_{diag}^{a,\eps}(U)$ if each component function belongs to that class. Obviously, we have $\dbar \mathcal A_{diag}^{a,\eps}(U)\subset \mathcal I_{diag}^{a,\eps}(U)$.

Since the recursive formula for Bergman coefficients requires studying functions of three variables in $\mathbb C^n$, we also present the following definition for functions in $\mathbb C^{3n}$. 

\begin{defn}\label{A_theta}
	Let $\eps>0$ be a constant. Let $\theta(x, y, z)$ be a function on $U$ such that $ \Phi: (x, y, z) \to (x, y, \theta(x, y, z))$ is a diffeomorphism between $U$ and its image denoted by an open set $U'\subset \mathbb{C}^{3n}$. Take $f(x,y,\theta)\in C^\infty(U')$.  Denote $v'=(x,y, \theta)$. We say $f(x,y,\theta)$ is \textit{ $G^{a,\eps}$-almost holomorphic} along $x=y=\bar{z}$ under $\theta$, if there exist some positive constants $C_0=C_0(f), C_1=C_1(f)$, and $b=b(f)$, such that for any multi-indices $\alpha,\beta\geq 0$, we have
	\begin{equation}\label{eq 4.13}
	\left | \left ( D^\alpha_{v'}D^{\beta}_{\bar{v}'}f \right )(x, y, \theta(x, y, z)) \right |\leq C_0C_1^{|\alpha+\beta|}(\alpha!\beta!)^{a+\eps} \, \exp\left(- b \,  (1-\delta_0(|\beta|)) {\max \{ |x- \bar z|, |y-\bar{z} | \} ^{-\frac{1}{a-1}}}  \right)   ,
	\end{equation}
	where $\delta_0(\cdot)$ is the delta function whose value is $1$ at $0$ and it is zero elsewhere. In addition, when we are restricted to $x=y=\bar{z}$, we can let $\eps=0$ in the above estimate. 
	
	We use $\mathcal{A}^{a,\eps}_{\theta}(U')$ to denote the set of all  $G^{a,\eps}$-almost holomorphic functions along $x=y=\bar{z}$ under $\theta$ in the above sense. We will also use $\mathcal I_{\theta}^{a,\eps}(U)$ for smooth functions $f(x,y,\theta)$ such that for any multi-indices $\alpha,\beta\geq 0$, 
	\begin{equation*}
	\left | \left ( D^\alpha_{v'}D^{\beta}_{\bar{v}'}f \right )(x, y, \theta(x, y, z) \right |\leq C_0C_1^{|\alpha+\beta|}(\alpha!\beta!)^{a+\eps} \, \exp\left(- b \,  {\max \{ |x- \bar z|, |y-\bar{z} | \} ^{-\frac{1}{a-1}}}  \right)   ,
	\end{equation*}
	And we say a vector belongs to $\mathcal A_{\theta}^{a,\eps}(U)$ or $\mathcal I_{\theta}^{a,\eps}(U)$ if each component function belongs to that class.
\end{defn}

In the following, for simplicity we will use the notation 
\begin{align}\label{Lambda}
\lambda_{b, |\beta|}(x, y, z)= \exp\left(- b \,  (1-\delta_0(|\beta|)) {\max \{ |x- \bar z |, |y-\bar{z} | \} ^{-\frac{1}{a-1}}}  \right)
\end{align}

\begin{rmk}
	Note that by the above notation, $\mathcal{A}^{a,\eps}_{z}$ (or $\mathcal{I}^{a,\eps}_{z}$) means $\mathcal{A}^{a,\eps}_{\theta}$ (or $\mathcal{I}^{a,\eps}_{\theta}$) when $\theta(x, y, z)=z$, which corresponds to the case $\Phi =I$.  
\end{rmk}

The space $\mathcal{A}^{a,\eps}_\theta(U)$ is closed under algebraic operations and differentiations.
\begin{lemma}\label{AHE CLOSED}
	For each $\theta$ as described in the previous definition, $\mathcal{A}^{a,\eps}_\theta(U)$ is closed under summation, subtraction, multiplication and differentiation. It is also closed under division if the denominator is uniformly away from zero in $U$.
	
	In particular, suppose $f,g\in \mathcal{A}_\theta^{a,\eps}(U)$. Then we can choose the constants appearing in \eqref{eq 4.13} for the product $fg\in \mathcal{A}_\theta^{a,\eps}(U)$ as 
	\begin{align*}
	C_0(fg)=C_0(f)C_0(g),\quad C_1(fg)=2\max\{C_1(f),C_1(g)\}, \quad\mbox{and}\quad b(fg)=\min\{b(f),b(g)\}.
	\end{align*}
	And for the differentiation, we can choose the constants as 
	\begin{align*}
	C_0(D_{v'}^\alpha D_{\bar{v}'}^\beta f)=C_0(f)(2^aC_1(f))^{|\alpha+\beta|},\quad 
	C_1(D_{v'}^\alpha D_{\bar{v}'}^\beta f)=2^aC_1(f),\quad
	\mbox{and}\quad b(D_{v'}^\alpha D_{\bar{v}'}^\beta f)=b(f),
	\end{align*}
	where $v'=(x,y,\theta)$.	
\end{lemma}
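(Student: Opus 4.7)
The plan is to verify each of the four closure properties directly, keeping careful track of how the Gevrey factorial growth $(\alpha!\beta!)^{a+\eps}$, the exponential decay factor $\lambda_{b,|\beta|}$ defined in \eqref{Lambda}, and the constants $C_0,C_1,b$ propagate through the operations. The sum/difference case is immediate from the triangle inequality with $C_0(f\pm g)=C_0(f)+C_0(g)$, $C_1=\max\{C_1(f),C_1(g)\}$, $b=\min\{b(f),b(g)\}$; and whenever $|\beta|>0$ both summand estimates carry the exponential, so the sum does as well.

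For multiplication I apply the Leibniz rule
\[
D^\alpha_{v'}D^\beta_{\bar{v}'}(fg)=\sum_{\alpha_1+\alpha_2=\alpha}\sum_{\beta_1+\beta_2=\beta}\binom{\alpha}{\alpha_1}\binom{\beta}{\beta_1}(D^{\alpha_1}_{v'}D^{\beta_1}_{\bar{v}'}f)(D^{\alpha_2}_{v'}D^{\beta_2}_{\bar{v}'}g),
\]
and bound each summand using the Gevrey estimates for $f$ and $g$. The standard inequality $\alpha_1!\alpha_2!\le\alpha!$ yields $(\alpha_1!\alpha_2!)^{a+\eps}\le(\alpha!)^{a+\eps}$, and combining with $\sum_{\alpha_1}\binom{\alpha}{\alpha_1}=2^{|\alpha|}$ and the analogous identity for $\beta$ produces the constant $C_1(fg)=2\max\{C_1(f),C_1(g)\}$. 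The decay factor is handled by the key observation that whenever $|\beta|>0$ one must have $|\beta_1|+|\beta_2|>0$, so at least one of the two factors carries the exponential, and taking $b(fg)=\min\{b(f),b(g)\}$ therefore suffices.

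For differentiation, the identity $D^\gamma_{v'}D^\delta_{\bar{v}'}(D^\alpha_{v'}D^\beta_{\bar{v}'}f)=D^{\alpha+\gamma}_{v'}D^{\beta+\delta}_{\bar{v}'}f$ reduces the claim to applying the estimate for $f$ at the merged multi-index $(\alpha+\gamma,\beta+\delta)$. The combinatorial bound $(\alpha+\gamma)!\le 2^{|\alpha+\gamma|}\alpha!\gamma!$ (and likewise for $\beta+\delta$) splits the factorial factor as claimed; absorbing the factors $(\alpha!\beta!)^{a+\eps}$ and $2^{a|\alpha+\beta|}C_1(f)^{|\alpha+\beta|}$ into the prefactor gives $C_0(D^\alpha_{v'}D^\beta_{\bar{v}'}f)=C_0(f)(2^aC_1(f))^{|\alpha+\beta|}$ while the new growth constant becomes $2^aC_1(f)$. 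The decay $\lambda_{b(f),|\beta+\delta|}$ always dominates $\lambda_{b(f),|\delta|}$ (the only case that matters is $\delta\ne0$, where both are equal in magnitude of the exponential), so $b$ is unchanged.

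For division, assume $|g|\ge c>0$ uniformly on $U$, and set $h=1/g$. I proceed by induction on $|\alpha|+|\beta|$: differentiating $gh\equiv 1$ and isolating the top-order term produces
\[
g\cdot D^\alpha_{v'}D^\beta_{\bar{v}'}h=-\sum_{\substack{\alpha'\le\alpha,\,\beta'\le\beta\\(\alpha',\beta')\ne(0,0)}}\binom{\alpha}{\alpha'}\binom{\beta}{\beta'}(D^{\alpha'}_{v'}D^{\beta'}_{\bar{v}'}g)(D^{\alpha-\alpha'}_{v'}D^{\beta-\beta'}_{\bar{v}'}h),
\]
and combining the closure under multiplication with the inductive hypothesis delivers a bound of the required form. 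The main obstacle is pinning down a single $C_1(h)$, independent of $|\alpha|+|\beta|$, that closes the induction: one must absorb the factor of $c^{-1}$ arising from each inversion, together with the $2$-factor from the binomial sums and the Gevrey combinatorics, into one uniform constant. Choosing $C_1(h)$ sufficiently large (depending on $c$, $C_1(g)$, $a$, $\eps$) makes the recursion stable, and the decay $\lambda_{b(h),|\beta|}$ is carried along automatically, since $g$ itself has $\lambda_{b(g),0}=1$ and all anti-holomorphic derivatives of $g$ in the sum carry the exponential, which by the multiplication rule propagates to $h$.
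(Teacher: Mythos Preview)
Your treatment of summation, multiplication, and differentiation is correct and, for multiplication, essentially identical to the paper's argument: Leibniz rule, the inequality $\alpha_1!\,\alpha_2!\le\alpha!$ raised to the power $a+\eps$, and the observation that when $|\beta|>0$ at least one factor in every Leibniz term carries the exponential decay. (One small remark: the factor $(\alpha!\beta!)^{a+\eps}$ you say is ``absorbed'' into $C_0(D_{v'}^\alpha D_{\bar v'}^\beta f)$ cannot literally be absorbed into a purely exponential prefactor; the stated constant in the lemma appears to drop this factorial, but since $\alpha,\beta$ are fixed this only affects the value of $C_0$, not the validity of the closure statement.)

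For division you take a genuinely different route from the paper. The paper writes down the Fa\`a di Bruno--type expansion
\[
D_{v'}^\alpha D_{\bar v'}^\beta\Big(\tfrac{1}{f}\Big)
=\alpha!\beta!\sum_{k=0}^{|\alpha+\beta|}\frac{(-1)^k}{f^{k+1}}
\sum_{\substack{\alpha_1+\cdots+\alpha_k=\alpha\\ \beta_1+\cdots+\beta_k=\beta\\ \alpha_j+\beta_j>0}}
\prod_{j=1}^k\frac{D_{v'}^{\alpha_j}D_{\bar v'}^{\beta_j}f}{\alpha_j!\beta_j!}
\]
and bounds it directly, which yields explicit constants in one step. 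Your inductive scheme via $D_{v'}^\alpha D_{\bar v'}^\beta(gh)=0$ also works, but the sentence ``choosing $C_1(h)$ sufficiently large makes the recursion stable'' hides the only nontrivial point. After you apply $\binom{\alpha}{\alpha'}(\alpha'!(\alpha-\alpha')!)^{a+\eps}\le(\alpha!)^{a+\eps}$ the binomial coefficients disappear and the residual sum is
\[
\sum_{0<(\alpha',\beta')\le(\alpha,\beta)}\Big(\tfrac{C_1(g)}{A_1}\Big)^{|\alpha'+\beta'|}
\;\le\; \big(1-\tfrac{C_1(g)}{A_1}\big)^{-m}-1,
\]
which is bounded \emph{independently of} $(\alpha,\beta)$ (here $m$ is the number of multi-index components); taking $A_1$ large enough that $c^{-1}C_0(g)\big[(1-C_1(g)/A_1)^{-m}-1\big]\le 1$ then closes the induction. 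Without this observation one might worry the Leibniz sum produces a factor like $(1+r)^{|\alpha+\beta|}$, which would not be uniform. The paper's Fa\`a di Bruno approach sidesteps this bookkeeping and gives concrete constants; your approach is slightly more elementary but needs this extra line to be complete.
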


We shall use the following lemma that $\mathcal{A}^{a,\eps}_{z}$ is closed under certain integrals.
\begin{lemma}\label{AHE Integral}
	If $f(x,y,z)\in \mathcal{A}^{a,\eps}_z$, then $g(x,y,z)=\int_0^1 f(x,tx+(1-t)y,z)dt \in \mathcal{A}^{a,\eps}_z$. And we can choose the constants appearing in Definition \ref{A_theta} as
	\begin{align*}
	C_0(g)=C_0(f), \quad C_1(g)=2^{a+\eps+1}C_1(f), \quad b(g)=b(f).
	\end{align*} 
	Similarly, 	If $f(x,y,z)\in \mathcal{I}^{a,\eps}_z$, then $g(x,y,z)=\int_0^1 f(x,tx+(1-t)y,z)dt \in \mathcal{I}^{a,\eps}_z$. And we can choose 
	\begin{align*}
	C_0(g)=C_0(f),\quad C_1(g)=2^{a+\eps+1}C_1(f), \quad b(g)=b(f).
	\end{align*} 
\end{lemma}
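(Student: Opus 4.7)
The plan is to differentiate under the integral sign and apply the chain rule, then bound the resulting sum of terms using the $\mathcal{A}_z^{a,\eps}$ bound on $f$. Writing $u(t) = tx + (1-t)y$, the essential observation is that differentiating $f(x, u(t), z)$ in $x_i$ produces $(\partial_{x_i} + t\,\partial_{y_i})f$, in $y_i$ produces $(1-t)\partial_{y_i}f$, and in $z_i$ or $\bar z_i$ acts directly; the $\bar x_i, \bar y_i$ directions behave identically with $t,1-t$ the same. Iterating, for multi-indices $\alpha = (\alpha_x,\alpha_y,\alpha_z)$ and $\beta=(\beta_x,\beta_y,\beta_z)$ I would obtain
\begin{equation*}
D_{v'}^\alpha D_{\bar v'}^\beta g(x,y,z) = \sum_{j \leq \alpha_x,\, j' \leq \beta_x} \binom{\alpha_x}{j}\binom{\beta_x}{j'} \int_0^1 t^{|\alpha_x-j|+|\beta_x-j'|}(1-t)^{|\alpha_y|+|\beta_y|}\, (\partial_{v'}^{\tilde\alpha}\partial_{\bar v'}^{\tilde\beta} f)(x,u(t),z)\,dt,
\end{equation*}
where $\tilde\alpha = (j, \alpha_x-j+\alpha_y, \alpha_z)$ and $\tilde\beta = (j', \beta_x-j'+\beta_y, \beta_z)$, both satisfying $|\tilde\alpha|=|\alpha|$ and $|\tilde\beta|=|\beta|$.

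Next I would insert the Gevrey bound from Definition~\ref{A_theta} on each term. The factorial estimate $(p+q)! \leq 2^{p+q} p!\,q!$ applied componentwise yields $j!(\alpha_x-j+\alpha_y)! \leq 2^{|\alpha_x|+|\alpha_y|}\alpha_x!\alpha_y!$, and similarly for $\beta$; raising to the power $a+\eps$ gives the factor $2^{(a+\eps)(|\alpha_x|+|\alpha_y|+|\beta_x|+|\beta_y|)}(\alpha!\beta!)^{a+\eps}$. The sums of binomial coefficients contribute $\sum_j\binom{\alpha_x}{j}\sum_{j'}\binom{\beta_x}{j'} = 2^{|\alpha_x|+|\beta_x|}$. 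Combining gives an overall multiplicative factor bounded by $(2^{a+\eps+1}C_1(f))^{|\alpha_x|+|\beta_x|}(2^{a+\eps}C_1(f))^{|\alpha_y|+|\beta_y|}C_1(f)^{|\alpha_z|+|\beta_z|} \leq (2^{a+\eps+1}C_1(f))^{|\alpha|+|\beta|}$, matching the claimed $C_1(g) = 2^{a+\eps+1}C_1(f)$. The time factor $t^{|\alpha_x-j|+|\beta_x-j'|}(1-t)^{|\alpha_y|+|\beta_y|}$ integrates to at most $1$, so no further growth is incurred, and $C_0(g) = C_0(f)$ is preserved.

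For the exponential decay factor, the key observation is the convexity bound $|u(t) - \bar z| = |t(x-\bar z) + (1-t)(y-\bar z)| \leq \max\{|x-\bar z|, |y-\bar z|\}$. Since $s \mapsto \exp(-bs^{-1/(a-1)})$ is monotone increasing in $s$, this yields
\begin{equation*}
\lambda_{b,|\tilde\beta|}(x,u(t),z) = \exp\bigl(-b(1-\delta_0(|\tilde\beta|))\max\{|x-\bar z|,|u(t)-\bar z|\}^{-1/(a-1)}\bigr) \leq \lambda_{b,|\beta|}(x,y,z),
\end{equation*}
using that $|\tilde\beta|=|\beta|$ so the indicator matches exactly. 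This gives $b(g) = b(f)$ as claimed.

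The argument for $f \in \mathcal{I}_z^{a,\eps}$ is identical: the only change is that $\lambda$ carries the factor $1$ rather than $1-\delta_0(|\beta|)$, but the monotonicity step is unchanged since it doesn't use the indicator. I don't anticipate a genuine obstacle here --- the proof is essentially bookkeeping --- but the one step that requires care is tracking the transformed multi-indices $\tilde\alpha,\tilde\beta$ to verify that $|\tilde\beta|=|\beta|$, which is what ensures the indicator $1-\delta_0$ is preserved and hence the decay rate $b$ is not degraded.
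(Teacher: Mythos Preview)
Your proof is correct and follows essentially the same approach as the paper's: both differentiate under the integral, expand via the chain rule to redistribute $x$-derivatives between the first and second slots, bound the transformed factorials $\tilde\alpha!^{a+\eps}$ by $2^{(a+\eps)|\alpha|}\alpha!^{a+\eps}$, sum the binomial coefficients for the extra factor of $2^{|\alpha_x|+|\beta_x|}$, and use the convexity bound $|u(t)-\bar z|\leq\max\{|x-\bar z|,|y-\bar z|\}$ to control $\lambda$. The paper simply takes $\max_{t\in[0,1]}$ rather than tracking the $t^{\cdots}(1-t)^{\cdots}$ factors, but this is cosmetic.
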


The space $\mathcal{A}^{a,\eps}_{\theta}$ is also closed under composition in the following sense.
\begin{lemma}\label{AHE Composition}
	Let $f(x,y,z)\in \mathcal{A}^{a,\eps}_z(U)$ be a function defined on $U\subset \mathbb{C}^{3n}$. Let $\theta(x,y,z)$ be a map on $U$ such that $\Phi: (x,y,z)\rightarrow (x,y,\theta(x,y,z))\in $ is a diffeomorphism between $U$ and its image denoted by an open set $U'\subset \mathbb{C}^{3n}$. Let $\Phi^{-1}: (x,y, \theta)\rightarrow (x,y, z(x,y,\theta))$ be the inverse map of $\Phi$.  If $z=z(x,y,\theta)\in \mathcal{A}^{a,\eps}_\theta (U')$, then the composition function $\widetilde{f}(x,y,\theta)=f(x,y,z(x,y,\theta))\in \mathcal{A}^{a,\eps}_\theta(U')$.
	
	In particular, if we use $C_0(f),C_1(f)$ and $b(f)$ to denote the constants in \eqref{eq 4.13} for an the function $f$, then we can choose the constants for $\widetilde{f}$ as 
	\begin{align*}
	C_0(\widetilde{f})=C_0(f),\quad C_1(\widetilde{f})=2^{a+\eps+3m} m^{a+\eps-1}C_0(z(x,y,\theta))C_1(f)C(z(x,y,\theta)),
	\end{align*}
	and
	\begin{align*}
	b(\widetilde{f})=\min\{b(f),b(z(x,y,\theta))\},
	\end{align*}
	where $m=3n$, $C_0(z(x,y,\theta))=\max_{1\leq i\leq n}C_0(z_i(x,y,\theta))$, $C_1(z(x,y,\theta))=\max_{1\leq i\leq n}C_1(z_i(x,y,\theta))$,
	and $b(z(x,y,\theta))=\min_{1\leq i\leq n}b(z_i(x,y,\theta))$.
\end{lemma}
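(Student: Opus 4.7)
\textbf{Proof Proposal for Lemma \ref{AHE Composition}.}

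My plan is to compute $D^\alpha_{v'} D^\beta_{\bar v'} \widetilde f$ using the multivariate Faà di Bruno formula for the composition $\widetilde f(x,y,\theta) = f(x,y,z(x,y,\theta))$. Since we are working with Wirtinger derivatives on smooth complex functions, the chain rule produces \emph{two} groups of terms at each step: for any Wirtinger derivative $D_{v'_i}$ or $D_{\bar v'_i}$, the derivative either hits the explicit slot $(x,y)$ of $f$, or passes through $z(x,y,\theta)$ via factors $D_{v'_i}z$, $D_{\bar v'_i}z$, $D_{v'_i}\bar z = \overline{D_{\bar v'_i} z}$, and $D_{\bar v'_i} \bar z = \overline{D_{v'_i} z}$. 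Iterating, $D^\alpha_{v'} D^\beta_{\bar v'} \widetilde f$ is a finite sum (indexed by set partitions of $\alpha \cup \beta$, together with a choice of assigning each block to one of $z$ or $\bar z$ and one of the six slots of $f$) of products of the form $(D^{A_0}_{v} D^{B_0}_{\bar v} f)(x,y,z(x,y,\theta)) \cdot \prod_{i} D^{A_i}_{v'} D^{B_i}_{\bar v'}(z \text{ or } \bar z)$, where $|A_0|+\sum|A_i|=|\alpha|$ and $|B_0|+\sum|B_i|=|\beta|$ after accounting for the conjugation bookkeeping.

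Evaluating at $\theta = \theta(x,y,z)$ returns $z(x,y,\theta(x,y,z)) = z$, so the $f$-factor becomes a derivative of $f$ at the original point $(x,y,z)$ and thereby is estimated by the $\mathcal A^{a,\eps}_z$ bound for $f$, while each $z$- or $\bar z$-factor is estimated by the $\mathcal A^{a,\eps}_\theta$ bound for $z$ (evaluated at the same $(x,y,\theta(x,y,z))$). Both bounds are measured by the same quantity $\max\{|x-\bar z|, |y-\bar z|\}$ from Definition \ref{A_theta}, which is exactly what we want for $\widetilde f \in \mathcal A^{a,\eps}_\theta(U')$. The Gevrey factorials combine using the standard estimate $\prod_i (|A_i|!)^{a+\eps} \cdot |A_0|!^{a+\eps} \le |\alpha|!^{a+\eps}$, and the number of partitions contributes a purely combinatorial factor bounded by $C^{|\alpha|+|\beta|}$, together with a factor $m^{a+\eps-1}$ per differentiation level from the sum over the $m=3n$ components of $z$. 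This is exactly how the proposed constants $C_0(\widetilde f) = C_0(f)$ and $C_1(\widetilde f) = 2^{a+\eps+3m} m^{a+\eps-1}C_0(z(\cdot))C_1(f)C(z(\cdot))$ will arise; I would track the combinatorial constants by an induction on $|\alpha|+|\beta|$ using Lemma \ref{AHE CLOSED} (closure under products and differentiation) to bootstrap, which cleanly absorbs the Bell-number type blowup into powers of the claimed $C_1(\widetilde f)$.

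The decay statement is the decisive point. When $|\beta|>0$, at least one $D_{\bar v'_i}$ is applied; I will argue that in every term of the Faà di Bruno expansion, at least one factor carries the exponential weight $\lambda_{b,1}(x,y,z)=\exp(-b\max\{|x-\bar z|,|y-\bar z|\}^{-1/(a-1)})$. Indeed, any $D_{\bar v'_i}$ distributed in the chain rule either lands directly on $f$, contributing $B_0 \neq 0$ and hence an $f$-factor with the exponential decay from $f \in \mathcal A^{a,\eps}_z$; or it lands on a $z$-factor, producing $D_{\bar v'_i} z$ or $D_{\bar v'_i}\bar z = \overline{D_{v'_i} z}$ — the former decays because $z \in \mathcal A^{a,\eps}_\theta$, while in the latter case the $\bar z$-factor is fed into the $\bar z$-slot of $f$, forcing $B_0 \neq 0$ and reinstating decay through $f$. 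Choosing $b(\widetilde f) = \min\{b(f), b(z)\}$ handles every case uniformly, and multiple decay factors are absorbed into a single one.

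The main obstacle I anticipate is purely combinatorial: verifying that the explicit constants in the statement arise cleanly after summing over all partitions in Faà di Bruno, and in particular extracting the precise exponent $a+\eps$ on the $m^{a+\eps-1}$ factor (rather than a looser bound). I would handle this by a careful induction on $|\alpha|+|\beta|$, using the product and differentiation constants already spelled out in Lemma \ref{AHE CLOSED} as the induction step, which automatically yields the geometric-in-$C_1$ growth claimed, while the exponential decay is stable under products, differentiation, and evaluation and therefore needs no additional bookkeeping beyond the case analysis above. The final assertion that $\eps$ can be dropped on the diagonal $z=\bar y$ follows because both $f$ and $z$ enjoy the same improvement there, and the Faà di Bruno terms inherit it factor-by-factor.
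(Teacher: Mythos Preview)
Your approach is essentially the same as the paper's: a direct Fa\`a di Bruno expansion of $D_{v'}^\alpha D_{\bar v'}^\beta \widetilde f$, Gevrey bounds on the $f$- and $z$-factors, and the same case analysis for the exponential decay when $|\beta|>0$. The one place where the paper is more explicit is precisely the combinatorial obstacle you flag: rather than inducting via Lemma~\ref{AHE CLOSED}, the paper proves a dedicated estimate (Lemma~\ref{Combinatoric 2}, namely $\alpha_1!\cdots\alpha_k!\le n^k(\alpha_1+\cdots+\alpha_k)!/k!$) and applies it to bound $\xi!\eta!\prod\alpha_{ij}!\beta_{ij}!\prod\alpha'_{ik}!\beta'_{ik}!\le m^{|\xi+\eta|}(\alpha+\beta)!$, which is exactly what produces the $m^{a+\eps-1}$ in $C_1(\widetilde f)$ --- it comes from absorbing the extra $(\xi!\eta!)^{a+\eps-1}$ left over from the $f$-estimate, not merely from summing over the $m$ components as you suggest; your proposed induction through Lemma~\ref{AHE CLOSED} would establish membership in $\mathcal A^{a,\eps}_\theta$ but is unlikely to recover this sharp constant.
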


\begin{rmk}\label{I Composition}
	In Lemma \ref{AHE Composition}, if we further assume that $f(x,y,z)\in \mathcal{I}_z^{a,\eps}$, then the composition $\widetilde{f}$ belongs to $\mathcal{I}_\theta^{a,\eps}$ with the same choice of constants.
\end{rmk}

Now suppose $U=B(0,1)\subset \mathbb{C}^n$ and the \k potential $\phi$ belongs to $G^a(U)$ and let  $\psi=F(\phi)$ be the almost holomorphic extension of $\phi$ defined by \eqref{AHE}. Then it is easy to see that $\psi(y,z)\in \mathcal{A}^{a,\eps}_{z}(U)$. Further by using Lemma \ref{AHE Integral}, if we take $\theta(x,y,z)=\int_0^1(D_y\psi)(tx+(1-t)y,z)dt$, then $\theta\in \mathcal{A}^{a,\eps}_{z}$. The following lemma says that the implicit functions $z=z(x,y,\theta)$ belong to $\mathcal{A}^{a,\eps}_\theta$.

\begin{lemma}\label{AHE Inverse}
	Consider the following system of equations:
	\begin{align}\label{eq 4.32}
	\theta=\int_0^1(D_y\psi)(tx+(1-t)y,z)dt\, .
	\end{align}
	Then the implicit functions $z=z(x,y,\theta)$ determined by the above equations belong to $\mathcal{A}^{a,\eps}_\theta$.
\end{lemma}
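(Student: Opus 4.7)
The plan is to combine the classical implicit function theorem with an induction based on the defining identity. Let $\Theta(x,y,z):=\int_0^1(D_y\psi)(tx+(1-t)y,z)\,dt$. Since $\psi=F(\phi)$, one has $\Theta(0,0,0)=0$ and $\partial_z\Theta|_{(0,0,0)}=\psi_{yz}(0,0)=\phi_{y\bar y}(0)$, which is non-singular because $\phi$ is a Kähler potential. The classical implicit function theorem therefore furnishes a smooth inverse $z(x,y,\theta)$ on a neighborhood $U'$ of the origin in $(x,y,\theta)$-coordinates, and a direct check shows $z(x,x,\theta(x,x,\bar x))=\bar x$, so the diffeomorphism $\Phi$ carries the diagonal $\{x=y=\bar z\}$ onto $\{x=y=\overline{z(x,y,\theta)}\}$.

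By Lemma \ref{AHE2}, $\psi\in\mathcal A^{a,\eps}_z$, and Lemma \ref{AHE Integral} then gives $\Theta\in\mathcal A^{a,\eps}_z$. To obtain the Gevrey bound $|D^\alpha_{v'}D^\beta_{\bar v'}z(x,y,\theta)|\leq C_0 C_1^{|\alpha+\beta|}(\alpha!\beta!)^{a+\eps}$, I would differentiate the identity $\theta\equiv\Theta(x,y,z(x,y,\theta))$ by $D^\alpha_{v'}D^\beta_{\bar v'}$ via Faà di Bruno's formula, isolating the top-order term $(\partial_z\Theta)\cdot D^\alpha_{v'}D^\beta_{\bar v'} z$ plus a combinatorial sum of products of lower-order derivatives of $\Theta$ and of $z$. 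Inverting $\partial_z\Theta$ (valid on a small neighborhood) and inducting on $|\alpha|+|\beta|$ then yields the Gevrey bound; the extra $\eps$ built into $\mathcal A^{a,\eps}$ absorbs the Faà di Bruno combinatorial loss, exactly as in the standard Gevrey inverse function theorem.

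For the exponential decay factor $\lambda_{b,|\beta|}$ when $|\beta|\geq 1$, I would observe that in the Faà di Bruno recursion above, any $D_{\bar v'_j}$ applied to the identity must land either on an anti-holomorphic derivative of $\Theta$ or on a lower-order anti-holomorphic derivative of $z$. By the $\mathcal A^{a,\eps}_z$ property of $\Theta$ and the inductive hypothesis on $z$, each such factor already carries exponential decay $\exp(-b\max\{|x-\bar z|,|y-\bar z|\}^{-1/(a-1)})$. Since $\Phi$ is Gevrey, the two quantities $\max\{|x-\bar z|,|y-\bar z|\}$ (in source coordinates) and $\max\{|x-\overline{z(x,y,\theta)}|,|y-\overline{z(x,y,\theta)}|\}$ (in target coordinates) are comparable on a neighborhood of the diagonal, so the exponential factor survives the change of variables with only a modification of the constant $b$.

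The main obstacle is the simultaneous bookkeeping of three quantities through the recursion: the factorial growth $(\alpha!\beta!)^{a+\eps}$, the geometric constant $C_1^{|\alpha+\beta|}$, and the decay $\lambda_{b,|\beta|}$. Tracking factorial growth through Faà di Bruno is standard and is precisely the reason the Gevrey index closes under implicit inversion only up to an extra $\eps$; tracking the exponential decay requires, at each step of the induction, separating the terms into those where an anti-holomorphic derivative has already been applied (which enjoy the decay by the induction hypothesis or the $\mathcal A^{a,\eps}_z$ property of $\Theta$) and those where none has been applied (which only occur when $|\beta|=0$, hence need no decay). Packaging these together, absorbing the bi-Lipschitz distortion between the two coordinate systems into the constant $b$, and using Lemma \ref{AHE CLOSED} to manage sums, products, and the inversion of $\partial_z\Theta$, then completes the proof.
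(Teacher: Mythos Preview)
Your overall strategy---differentiate the defining identity, isolate the highest-order derivative of $z$, and induct on $|\alpha|+|\beta|$ using the Gevrey bounds on $\Theta$---is sound and is essentially the standard Gevrey implicit function argument. There is, however, one technical imprecision: the top-order contribution is not $(\partial_z\Theta)\cdot D^\alpha_{v'}D^\beta_{\bar v'}z$ alone, but rather
\[
(\partial_z\Theta)\,D^\alpha_{v'}D^\beta_{\bar v'}z \;+\; (\partial_{\bar z}\Theta)\,D^\alpha_{v'}D^\beta_{\bar v'}\bar z,
\]
since $\Theta$ is only \emph{almost} holomorphic. The second term couples the unknown at $(\alpha,\beta)$ with the one at $(\beta,\alpha)$ (via conjugation), so you cannot simply invert $\partial_z\Theta$; you must pair the identity with its conjugate $\bar\theta=\bar\Theta$ and invert the $2n\times 2n$ real Jacobian $\bigl(\begin{smallmatrix}\partial_z\Theta & \partial_{\bar z}\Theta\\ \partial_z\bar\Theta & \partial_{\bar z}\bar\Theta\end{smallmatrix}\bigr)$. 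Once you do that, your decay argument goes through: in every lower-order term of the Fa\`a di Bruno expansion, at least one anti-holomorphic derivative has landed on $\Theta$ (producing decay), on a lower-order $D_{\bar v'}$-derivative of $z$ (decay by induction), or on a factor of $\bar z$---but the last case forces a $\partial_{\bar z}\Theta$ factor in the outer derivative of $\Theta$, which again carries decay.

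The paper's proof follows a genuinely different route. Rather than inducting directly on derivative order, it expands $z_n$ in a Taylor series about each point, derives a recursion \eqref{eq 4.19} for the Taylor coefficients $b_{\gamma\bar\delta}$ (after coupling with the conjugate equation, exactly the $2n\times 2n$ system above), and then controls the recursion by the classical \emph{method of majorants}: it replaces the recursion by an explicit analytic implicit equation whose solution dominates all the $b_{\gamma\bar\delta}$ at once. The full $n$-component system is then handled by induction on the number of equations. The majorant method packages the uniformity of the Gevrey constants and the $(\gamma,\delta)\leftrightarrow(\delta,\gamma)$ coupling very cleanly, at the cost of being less transparent; your direct inductive approach is more hands-on but requires more careful bookkeeping to keep $C_0,C_1$ independent of $(\alpha,\beta)$, which is precisely the place where the extra $\eps$ is spent.
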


As we said at the beginning of this section, the proofs of all the above lemmas will be given in Section \ref{ProofOfGevreyStuff}.

We are now prepared to prove Proposition \ref{LocalBergmanKernel}.

\section{The remainder estimates and the proof of Proposition~\ref{LocalBergmanKernel}}\label{Sec MethodBBS}

Let $a_m$, $A_m$, and $b_m$ be given by \eqref{ambm}, \eqref{A}, and \eqref{B}. Remember that $a^{(N)}$, $A^{(N)}$, and $B^{(N)}$ are the partial sums of $a$, $A$, and $B$ up to order ${k^{-N}}$. When we apply the method of Berman-Berndtsson-Sj\"ostrand, the remainder term is closely related to the growth rate of $a_m$, $A_m$ and their derivatives as we will see soon. So we will first make a series of lemmas on estimating $a_m$ and $A_m$ preparing for the proof of Proposition \ref{LocalBergmanKernel}. 

Let's begin with estimating $a_m$. 

\begin{lemma}\label{amxyz}
	For each integer $m\geq 0$, we have $a_m(x,y,z)\in \mathcal{A}^{a,\eps}_z$. And we can choose the constants appearing in Definition \ref{A_theta} as
	\begin{align*}
	C_0(a_m)= C^{m+1}m!^{2a+2\eps}, && C_1(a_m)=C, &&b(a_m)=b,
	\end{align*}
	where $C$ and $b$ are some positive constants independent of $m$. 
\end{lemma}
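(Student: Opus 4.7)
The plan is to use the explicit formula \eqref{ambm}, which expresses $a_m$ as $b_m \cdot \Delta_0$ for $m \geq 1$ and as $\Delta_0 - 1$ for $m = 0$, and to combine the bounds on $b_m$ from Theorem \ref{MainLemma} with the closure properties of $\mathcal A^{a,\eps}_z$ recorded in Lemmas \ref{AHE CLOSED}--\ref{AHE Inverse}. Throughout, I interpret $a_m(x,y,z)$ as the reexpression of $a_m(x,y,\theta)$ under the change of coordinates $\theta = \theta(x,y,z)$; in these coordinates $b_m(x, z(x,y,\theta))$ simply becomes $b_m(x,z)$, and $\Delta_0$ becomes $\det\psi_{yz}(y,z)/\det\theta_z(x,y,z)$, each regarded as a function of $(x,y,z)$.

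The first step is to verify that $\Delta_0 \in \mathcal A^{a,\eps}_z$ on some neighborhood $U$ of the origin, with constants independent of $m$. Since $\phi \in G^a(V)$, Lemma \ref{AHE2} places its almost holomorphic extension $\psi = F(\phi)$ in the $G^{a,\eps}$-almost holomorphic class; viewing $\psi(y,z)$ as constant in $x$ then puts it in $\mathcal A^{a,\eps}_z$. Two derivatives plus a determinant yield $\det \psi_{yz} \in \mathcal A^{a,\eps}_z$ via Lemma \ref{AHE CLOSED}, and formula \eqref{theta} combined with Lemma \ref{AHE Integral} gives $\theta \in \mathcal A^{a,\eps}_z$, whence $\det\theta_z \in \mathcal A^{a,\eps}_z$. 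At the origin $\det \theta_z(0,0,0) = \det \psi_{yz}(0,0)$ is the nondegenerate complex Hessian of the K\"ahler potential at $p$, so by continuity $\det\theta_z$ is bounded away from zero on a fixed neighborhood $U$ independent of $m$. The division rule in Lemma \ref{AHE CLOSED} then furnishes $\Delta_0 \in \mathcal A^{a,\eps}_z$ with fixed constants $C_0(\Delta_0), C_1(\Delta_0), b(\Delta_0)$.

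The second step is to read Theorem \ref{MainLemma} as the statement $b_m \in \mathcal A^{a,\eps}_z$ (with $b_m$ regarded as $y$-independent) with $C_0(b_m) = C^{m+1}m!^{2a+2\eps}$, $C_1(b_m)=C$, $b(b_m)=b$ uniformly in $m$. One small point to check is that the decay bound in Theorem \ref{MainLemma} uses $|x-\bar z|^{-1/(a-1)}$, whereas Definition \ref{A_theta} requires $\max\{|x-\bar z|,|y-\bar z|\}^{-1/(a-1)}$; since the former dominates the latter in the exponential, the required bound is satisfied with no loss. For $m = 0$, the identity $a_0 = \Delta_0 - 1 \in \mathcal A^{a,\eps}_z$ is immediate. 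For $m \geq 1$, the product rule of Lemma \ref{AHE CLOSED} applied to $a_m = b_m \cdot \Delta_0$ yields $C_0(a_m) = C_0(b_m)C_0(\Delta_0) \leq \widetilde C^{m+1}m!^{2a+2\eps}$, $C_1(a_m) = 2\max\{C_1(b_m),C_1(\Delta_0)\}$, and $b(a_m) = \min\{b(b_m),b(\Delta_0)\}$; enlarging $C$ and shrinking $b$ once to absorb the $m$-independent contributions from $\Delta_0$ produces the claimed uniform constants.

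The only substantive input is Theorem \ref{MainLemma}, which supplies the factorial growth $m!^{2a+2\eps}$; the content of the present lemma is merely that multiplication by the $m$-independent factor $\Delta_0$ (and, when $m = 0$, subtraction of $1$) does not degrade the bound. Hence the main obstacle reduces to verifying once and for all that $U$ and the constants associated to $\Delta_0$ may be chosen independently of $m$, which follows from the nondegeneracy of $\psi_{yz}(0,0)$ and the algebra closure lemmas; no $m$-dependent iteration that could blow up appears in the argument.
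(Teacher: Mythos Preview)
Your proposal is correct and follows essentially the same route as the paper: both arguments invoke \eqref{ambm}, establish $\Delta_0\in\mathcal A^{a,\eps}_z$ via the closure lemmas (Lemma \ref{AHE CLOSED} and Lemma \ref{AHE Integral}) applied to $\psi$ and $\theta$, and then apply the product rule from Lemma \ref{AHE CLOSED} together with Theorem \ref{MainLemma} to read off the constants. Your treatment is in fact slightly more careful than the paper's in two places---you explicitly note the nondegeneracy of $\det\theta_z$ at the origin needed for the division step, and you check that the decay factor $\exp(-b|x-\bar z|^{-1/(a-1)})$ from Theorem \ref{MainLemma} is stronger than the $\exp(-b\max\{|x-\bar z|,|y-\bar z|\}^{-1/(a-1)})$ required by Definition \ref{A_theta}---but these refinements do not alter the strategy.
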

\begin{proof}
	Recall the relations between $a_m$ and $b_m$ from \eqref{ambm}:
	\begin{equation*}
	a_m(x,y,z)=
	\begin{cases}
	\Delta_0(x,y,z)-1 & \mbox{ when }m=0,\\
	b_m(x,z)\Delta_0(x,y,z) & \mbox{ when }m\geq 1.
	\end{cases}
	\end{equation*}
	Since $\phi \in G^a$ , the almost holomorphic extension $\psi(y,z)$ introduced as in definition \ref{AHE} belongs to $\mathcal A^{a,\eps}_{z}$. Recall that by Lemma \ref{AHE Integral}, $\theta(x,y,z)=\int_0^1 (D_y\psi)(tx+(1-t)y,z)dt\in \mathcal A^{a,\eps}_{z}$. By Lemma \ref{AHE CLOSED}, we know $\mathcal{A}^{a,\eps}_{z}$ is closed under certain algebraic operations and differentiation, $\Delta_0(x,y,z)=\frac{\det \psi_{yz}(y,z)}{\det \theta_z(x,y,z)}$ is therefore also contained in $\mathcal{A}^{a,\eps}_{z}$. Since $a_m(x,y,z)=b_m(x,z)\Delta_0(x,y,z)$ for $m\geq 1$, by our Lemma \ref{AHE CLOSED} on the multiplication, we can choose
	$$C_0(a_m)=C_0(b_m)C_0(\Delta_0)=C^{m+1}m!^{2a+2\eps},$$  $$C(a_m)=2\max\{C_1(b_m),C_1(\Delta_0)\},$$  and  $$b(a_m)=\min\{b(b_m),b(\Delta_0)\}.$$ 
	Thus, the result follows as $C_0(b_m)=C^mm!^{2a+2\eps}$ for some positive constant $C$ and $C_1(b_m), b(b_m)$ are both independent of $m$ by Theorem \ref{MainLemma}. In addition, it is easy to see that when we are restricted to $x=y=\bar{z}$, $\eps$ can be replaced by $0$.
\end{proof}
\begin{lemma}\label{amxytheta}
	Denote $\widetilde{a}_m=a_m(x,y,\theta)=a_m(x,y,z(x,y,\theta))$. Then $a_m(x,y,\theta)\in \mathcal{A}^{a,\eps}_\theta$ and we can choose
	\begin{align*}
	C_0(\widetilde{a}_m)= C^{m+1}m!^{2a+2\eps}, && C_1(\widetilde{a}_m)=C, &&b(\widetilde{a}_m)=b,
	\end{align*}
	where $C$ and $b$ are some positive constants independent of $m$.
\end{lemma}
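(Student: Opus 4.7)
The plan is to obtain this lemma as a direct application of the composition result Lemma \ref{AHE Composition} combined with the inverse function result Lemma \ref{AHE Inverse}, using as input Lemma \ref{amxyz} that was just established.

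First, I would recall from Lemma \ref{amxyz} that $a_m(x,y,z) \in \mathcal{A}^{a,\eps}_z(U)$ with constants $C_0(a_m) = C^{m+1} m!^{2a+2\eps}$, $C_1(a_m) = C$, and $b(a_m) = b$, where $C$ and $b$ are independent of $m$. Next, I would recall from Lemma \ref{AHE Inverse} that the implicit function $z = z(x,y,\theta)$ obtained from inverting $\theta = \int_0^1 (D_y \psi)(tx+(1-t)y, z)\, dt$ belongs to $\mathcal{A}^{a,\eps}_\theta(U')$, with some positive constants $C_0(z), C_1(z), b(z)$ that depend only on $\psi$ and the dimension $n$, not on $m$.

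With these two ingredients in hand, I would apply Lemma \ref{AHE Composition} to $f = a_m$ and the map $(x,y,\theta) \mapsto (x,y,z(x,y,\theta))$ to conclude that the composition $\widetilde{a}_m(x,y,\theta) = a_m(x,y,z(x,y,\theta))$ lies in $\mathcal{A}^{a,\eps}_\theta(U')$. Reading off the constants from that lemma (noting that the integer denoted $m$ there is the ambient dimension $3n$, not our index $m$), we obtain
\[
C_0(\widetilde{a}_m) = C_0(a_m) = C^{m+1} m!^{2a+2\eps},
\]
\[
C_1(\widetilde{a}_m) = 2^{a+\eps+3\cdot 3n} (3n)^{a+\eps-1} C_0(z)\, C_1(a_m)\, C_1(z),
\]
\[
b(\widetilde{a}_m) = \min\{b(a_m), b(z)\} = \min\{b, b(z)\}.
\]
Since $C_1(a_m) = C$ and $C_0(z), C_1(z), b(z)$ are all independent of $m$, the quantities $C_1(\widetilde a_m)$ and $b(\widetilde a_m)$ can be absorbed into a single constant $C$ and a single constant $b$, both independent of $m$, which matches the statement. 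Finally, when restricted to $x=y=\bar z$ (equivalently $\theta = \theta(x,x,\bar x)$), both Lemma \ref{amxyz} and Lemma \ref{AHE Inverse} allow $\eps = 0$, so the same reduction carries over to $\widetilde a_m$ on the diagonal.

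There is no real obstacle here: the lemma is essentially a bookkeeping consequence of the already-established closure of the class $\mathcal{A}^{a,\eps}_{\bullet}$ under composition, together with the fact that $z(x,y,\theta)$ itself lies in $\mathcal{A}^{a,\eps}_\theta$. The only mild subtlety is making sure that the constants produced by the composition lemma do not pick up a factor that grows with the index $m$ of the Bergman coefficient; this is guaranteed because $C_1(a_m)$ and $b(a_m)$ from Lemma \ref{amxyz} are themselves independent of $m$, so the $m$-dependence is confined entirely to the $C_0$ factor as required.
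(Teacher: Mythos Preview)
Your proposal is correct and follows essentially the same approach as the paper: invoke Lemma~\ref{AHE Inverse} to get $z(x,y,\theta)\in\mathcal{A}^{a,\eps}_\theta$, then apply the composition Lemma~\ref{AHE Composition} to $a_m(x,y,z)$ from Lemma~\ref{amxyz} and read off the constants, noting that the $m$-dependence stays confined to $C_0$.
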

\begin{proof}
	By Lemma \ref{AHE Inverse}, we have $z=z(x,y,\theta)\in \mathcal{A}^{a,\eps}_\theta$. Since	$\widetilde{a}_m$ is obtained from the composition of $a_m(x,y,z)$ and the map $z=z(x,y,\theta)$, by Lemma \ref{AHE Composition} 
	$$C_0(\widetilde{a}_m)=C_0(a_m),$$  $$C_1(\widetilde{a}_m)=2^{a+\eps+9n} (3n)^{a-1+\eps}C_0(z(x,y,\theta))C_1(a_m)C_1(z(x,y,\theta)), $$ $$b(\widetilde{a}_m)=\min\{b(a_m),b(z(x,y,\theta))\}.$$
	So the result follows directly from Lemma \ref{amxyz}.
\end{proof}

After we obtain the estimates on $a_m(x,y,z)$ and $a_m(x,y,\theta)$, now we proceed to $(Sa)_m(x,y,z)$ and $(Sa)_m(x,y,\theta)$.
\begin{lemma}\label{Sam}
	For each integer $m\geq 0$, $(Sa)_m(x,y,\theta)\in \mathcal{A}^{a,\eps}_\theta$ and $(Sa)_m(x,y,z)\in \mathcal{A}^{a,\eps}_z$. And we can choose 
	\begin{align*}
	C_0((Sa)_m(x,y,\theta))= C^{m+1}m!^{2a+2\eps}, && C_1((Sa)_m(x,y,\theta))=C, &&b((Sa)_m(x,y,\theta))=b,
	\end{align*}	
	\begin{align*}
	C_0((Sa)_m(x,y,z))= C^{m+1}m!^{2a+2\eps}, && C_1((Sa)_m(x,y,z))=C, &&b((Sa)_m(x,y,z))=b,
	\end{align*}	
	where $C$ and $b$ are some positive constants independent of $m$.
\end{lemma}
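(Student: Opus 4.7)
The operator $S = e^{D_\theta \cdot D_y / k}$ expands as the convergent series $\sum_{l\ge 0} \frac{1}{l!k^l}(D_\theta\cdot D_y)^l$, so on the level of coefficients
\[
(Sa)_m(x,y,\theta) = \sum_{l=0}^m \frac{1}{l!}(D_\theta\cdot D_y)^l a_{m-l}(x,y,\theta).
\]
My plan is to verify the defining inequality \eqref{eq 4.13} for $(Sa)_m$ term by term using Lemma \ref{amxytheta} together with the closure of $\mathcal A^{a,\eps}_\theta$ under differentiation and finite sums from Lemma \ref{AHE CLOSED}. Expanding $(D_\theta\cdot D_y)^l = \sum_{|\mu|=l}\binom{l}{\mu}D^\mu_\theta D^\mu_y$ by the multinomial theorem reduces matters to bounding $D^\alpha_{v'}D^\beta_{\bar v'}D^\mu_\theta D^\mu_y a_{m-l}$ for fixed multi-indices $\alpha,\beta$ and for each $\mu$ with $|\mu|=l$.

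Lemma \ref{amxytheta} supplies the pointwise bound on these derivatives, and the elementary estimate $(\gamma+\mu)!\le 2^{|\gamma|+|\mu|}\gamma!\mu!$ allows me to split the resulting factorial as $(\alpha!\beta!)^{a+\eps}$ — the shape required by \eqref{eq 4.13} — times a residual $(\mu!)^{2(a+\eps)}$. Summing over $\mu$ with $|\mu|=l$, using $\mu!\le l!$ (which follows from $\binom{l}{\mu}\in\mathbb Z_{>0}$) and $|\{|\mu|=l\}|\le C^l$ for a dimensional constant, gives
\[
\sum_{|\mu|=l}\frac{l!}{\mu!}(\mu!)^{2(a+\eps)} \le C^l(l!)^{2(a+\eps)}.
\]
After dividing by $l!$ (the factor coming from the expansion of $S$), the $l$-th term is controlled by $C^l(l!)^{2(a+\eps)-1}(m-l)!^{2a+2\eps}$ times powers of $C$ absorbable into $C^{|\alpha+\beta|}$.

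The arithmetic heart of the argument is then the inequality
\[
(m-l)!^{2a+2\eps}(l!)^{2a+2\eps-1} \le \frac{m!^{2a+2\eps}}{l!},
\]
which is immediate from $(m-l)!\,l!\le m!$. Substituting it and summing over $0\le l\le m$ produces a convergent series of the form $\sum_l C^l/l!$, whose total is an absolute constant. The net outcome is
\[
\left|D^\alpha_{v'}D^\beta_{\bar v'}(Sa)_m\right|\le C^{m+1}m!^{2a+2\eps}\,C^{|\alpha+\beta|}(\alpha!\beta!)^{a+\eps}\lambda_{b,|\beta|},
\]
for new $m$-independent constants; the decay factor $\lambda_{b,|\beta|}$ is preserved because $(D_\theta\cdot D_y)^l$ is purely holomorphic, so any $\bar v'$-derivative lands on $a_{m-l}$ itself, and the restriction to $x=y=\bar z$ with $\eps=0$ is inherited from the corresponding property of $a_{m-l}$. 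This proves $(Sa)_m(x,y,\theta)\in\mathcal A^{a,\eps}_\theta$ with the claimed constants.

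To transfer the conclusion from $\theta$-coordinates to $z$-coordinates, I invoke Lemma \ref{AHE Composition} applied to the diffeomorphism $\theta=\theta(x,y,z)$: since $\psi\in\mathcal A^{a,\eps}_z$ implies $\theta(x,y,z)=\int_0^1(D_y\psi)(tx+(1-t)y,z)\,dt\in \mathcal A^{a,\eps}_z$ by Lemma \ref{AHE Integral} (as already observed in the proof of Lemma \ref{amxyz}), the composition $(Sa)_m(x,y,z)=(Sa)_m(x,y,\theta(x,y,z))$ lies in $\mathcal A^{a,\eps}_z$ with constants of the same form, after possibly enlarging $C$. The principal obstacle throughout is the combinatorial bookkeeping: the potentially dangerous $(\mu!)^{2(a+\eps)}$ from differentiating inside $(D_\theta\cdot D_y)^l$ and the factorial $(l!)^{2(a+\eps)-1}$ it generates must not degrade the $m!^{2a+2\eps}$ bound, and this is achieved precisely by the inequality $(m-l)!\,l!\le m!$ raised to the power $2(a+\eps)$.
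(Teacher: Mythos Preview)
Your proof is correct and follows essentially the same route as the paper's: both expand $(Sa)_m=\sum_{i+j=m}\frac{1}{i!}\sum_{|\delta|=i}\binom{i}{\delta}D_y^\delta D_\theta^\delta a_j$, apply Lemma~\ref{amxytheta} to bound $D_{v'}^{\alpha+(0,\delta,\delta)}D_{\bar v'}^\beta a_j$, split the factorial via $(\alpha+(0,\delta,\delta))!\le 2^{|\alpha|+2i}\alpha!\,\delta!^2$ together with $\delta!\le i!$, use $i!\,j!\le m!$ to collapse the factorial growth to $m!^{2a+2\eps}$, and then pass to $z$-coordinates by composing with $\theta(x,y,z)\in\mathcal A^{a,\eps}_z$ via Lemma~\ref{AHE Composition}. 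Your explicit isolation of the inequality $(m-l)!^{2a+2\eps}(l!)^{2(a+\eps)-1}\le m!^{2a+2\eps}/l!$ makes the summability over $l$ more transparent than the paper's terser final step, but the argument is the same.
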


\begin{proof}
	Since
	\begin{equation*}
	Sa=\sum_{i=0}^\infty\frac{(D_\theta\cdot D_y)^i}{i!k^i}\sum_{j=0}^\infty \frac{a_j}{k^j}
	=\sum_{m=0}^\infty\sum_{i+j=m}\frac{(D_\theta\cdot D_y)^ia_j}{i!k^m},
	\end{equation*}
	we have
	\begin{equation*}
	(Sa)_m(x,y,\theta)=\sum_{i+j=m}\frac{(D_\theta\cdot D_y)^ia_j}{i!}(x,y,\theta)=\sum_{i+j=m}\sum_{|\delta|=l}\frac{1}{i!}\binom{l}{\delta}D^{\delta}_y D_\theta^\delta a_j(x,y,\theta).
	\end{equation*}
	Denote $v'=(x,y,\theta)$. Therefore, 
	\begin{align*}
	D_{v'}^{\alpha}D_{\bar{v}'}^{\beta}(Sa)_m(x,y,\theta)
	=\sum_{i+j=m}\sum_{|\delta|=i}\frac{1}{i!}\binom{i}{\delta}D_{v'}^{\alpha+(0,\delta,\delta)}D_{\bar{v}'}^{\beta} a_j(x,y,\theta)
	\end{align*}
	By Lemma \ref{amxytheta}, we have
	\begin{align*}
	&\left|D_{v'}^{\alpha}D_{\bar{v}'}^{\beta}(Sa)_m(x,y,\theta)\right|
	\\
	&\qquad
	\leq \sum_{i+j=m}\sum_{|\delta|=i}\frac{1}{i!}\binom{i}{\delta}C^{|\alpha+\beta|+2|\delta|+j+1} j!^{2a+2\eps}(\alpha+(0,\delta,\delta))!^{a+\eps}\beta!^{a+\eps}\lambda_{b,|\beta|}(x,y,z)
	\\
	&\qquad
	\leq \sum_{i+j=m}\frac{n^i}{i!}2^{(a+\eps)(|\alpha|+2i)}C^{|\alpha+\beta|+2i+j+1} j!^{2a+2\eps}i!^{2a+2\eps}\alpha!^{a+\eps}\beta!^{a+\eps}\lambda_{b,|\beta|}(x,y,z)
	\\		
	&\qquad\leq C^{m+1+|\alpha+\beta|} \alpha!^{a+\eps}\beta!^{a+\eps}m!^{2a+2\eps}\lambda_{b,|\beta|}(x,y,z),
	\end{align*}
	where $C$ is a constant independent to $m$, which may vary from line to line, and $b=b(a_m)$ is also independent to $m$. So we obtain the result on $(Sa)_m(x,y,\theta)$. Note $\theta(x,y,z)\in \mathcal{A}^{a,\eps}_z$. The result on $(Sa)_m(x,y,z)$ follows by Lemma \ref{AHE Composition} and keeping track of these constants.  
\end{proof}

Next, we will estimate the growth of $(SA)_m(x,y,z)$ and $(SA)_m(x,y,\theta)$.
\begin{lemma}\label{SAm}
	For each integer $m\geq 0$, $(SA)_m(x,y,\theta)\in \mathcal{A}^{a,\eps}_\theta$ and $(SA)_m(x,y,z)\in \mathcal{A}^{a,\eps}_z$. And we can choose
	\begin{align*}
	C_0((SA)_m(x,y,\theta))= C^{m+1}m!^{2a+2\eps}, && C_1((SA)_m(x,y,\theta))=C, &&b((SA)_m(x,y,\theta))=b,
	\end{align*}	
	\begin{align*}
	C_0((SA)_m(x,y,z))= C^{m+1}m!^{2a+2\eps}, && C_1((SA)_m(x,y,z))=C, &&b((SA)_m(x,y,z))=b,
	\end{align*}	
	where $C$ and $b$ are some positive constants independent of $m$.
\end{lemma}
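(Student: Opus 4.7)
The plan is to obtain the estimates for $(SA)_m(x,y,z)$ directly from the formula \eqref{def SA} combined with Lemma \ref{Sam}, and then transfer to $(SA)_m(x,y,\theta)$ via the composition Lemma \ref{AHE Composition} together with Lemma \ref{AHE Inverse}. Matching powers of $k^{-1}$ in \eqref{def SA} gives the recursion
\begin{equation*}
(SA)_{m}(x,y,z) = -\int_0^1 (D_y (Sa)_{m-1})(x,\, tx+(1-t)y,\, z)\, dt \quad (m\ge 1), \qquad (SA)_0 = 0,
\end{equation*}
so the problem reduces to controlling the right-hand side.

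First I would apply Lemma \ref{Sam} to put $(Sa)_{m-1}(x,y,z)$ in $\mathcal{A}^{a,\eps}_z$ with constants $C_0=C^{m}(m-1)!^{2a+2\eps}$, $C_1=C$, $b=b$. Then I would invoke the differentiation part of Lemma \ref{AHE CLOSED} to conclude $D_y(Sa)_{m-1}\in \mathcal{A}^{a,\eps}_z$ with constants $C_0' = 2^a C \cdot C^{m}(m-1)!^{2a+2\eps}$, $C_1'=2^aC$, $b'=b$. Next, Lemma \ref{AHE Integral} applied to the partial integration in $t$ preserves membership in $\mathcal{A}^{a,\eps}_z$, with the same $C_0$ and $b$, and $C_1$ inflated by a factor $2^{a+\eps+1}$. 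Absorbing all numerical factors and using $(m-1)!^{2a+2\eps}\le m!^{2a+2\eps}$, this produces
\begin{equation*}
C_0((SA)_m(x,y,z)) \le \widetilde{C}^{\,m+1}\, m!^{2a+2\eps}, \qquad C_1((SA)_m(x,y,z)) = \widetilde{C}, \qquad b((SA)_m(x,y,z)) = b,
\end{equation*}
for a constant $\widetilde C$ independent of $m$, which proves the second set of bounds.

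To pass to the $\theta$-variables, I would use Lemma \ref{AHE Inverse} to conclude $z=z(x,y,\theta) \in \mathcal{A}^{a,\eps}_\theta$, and then apply Lemma \ref{AHE Composition} to the composition $(SA)_m(x,y,z(x,y,\theta))$. This keeps $C_0$ unchanged and modifies $C_1$ by a multiplicative constant depending only on $n,a,\eps$ and on the fixed constants of $z(x,y,\theta)$, and replaces $b$ by $\min\{b((SA)_m(x,y,z)),\, b(z(x,y,\theta))\}$; all of these are independent of $m$. Redefining the final constant $C$ to absorb these factors yields the claimed bounds on $(SA)_m(x,y,\theta)$.

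There is no real obstacle beyond careful bookkeeping of constants: the only mildly delicate point is verifying that the increment $(m-1)!^{2a+2\eps} \to m!^{2a+2\eps}$ can be absorbed into the geometric prefactor, but this is immediate since $m!/(m-1)!=m \le 2^{m}$ and can be folded into an enlarged base constant $\widetilde C$. Since the recursion raises the index by one at the cost of one differentiation plus one $t$-integration, and each of these operations is controlled by the previously established closure lemmas, the induction base $(SA)_0=0$ together with the single-step argument above completes the proof.
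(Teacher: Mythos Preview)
Your proposal is correct and follows essentially the same approach as the paper: extract the componentwise formula $(SA)_m(x,y,z)=-\int_0^1 D_y(Sa)_{m-1}(x,tx+(1-t)y,z)\,dt$ from \eqref{def SA}, apply Lemma \ref{Sam} together with the differentiation part of Lemma \ref{AHE CLOSED} and then Lemma \ref{AHE Integral} to get the $z$-side estimates, and finally use Lemmas \ref{AHE Inverse} and \ref{AHE Composition} to pass to the $\theta$-side. The only superfluous remark is the closing appeal to ``induction'': the formula expresses $(SA)_m$ directly in terms of $(Sa)_{m-1}$, which is already controlled by Lemma \ref{Sam}, so a single application of the closure lemmas suffices and no inductive step on $(SA)_m$ is needed.
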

\begin{proof}
	Recall $(SA)_0(x,y,z)=0$.
	By \eqref{def SA}, for $m\geq 1$, we have
	\begin{align*}
	(SA)_m(x,y,z)=-\int_0^1 D_y(Sa)_{m-1}(x,tx+(1-t)y,z)dt.
	\end{align*}
	By Lemma \ref{Sam}, we have $D_y(Sa)_{m-1}(x,y,z)\in \mathcal{A}^{a,\eps}_z$. Then by Lemma \ref{AHE Integral} and Lemma \ref{AHE Composition}, we have $(SA)_m(x,y,z)\in \mathcal{A}^{a,\eps}_z$ and $(SA)_m(x,y,\theta)\in \mathcal{A}^{a,\eps}_\theta$. The remaining part follows straightforwardly by keeping track of the constants.
\end{proof}

We define
\begin{equation}
d_m(x,y,z)=\int_0^1 D_{\bar{y}}(Sa)_{m}(x,tx+(1-t)y,z)dt,
\end{equation}
and 
\begin{equation}
d_m(x,y,\theta)=d_m(x,y,z(x,y,\theta)).
\end{equation}

Since $(Sa)_m\in \mathcal{A}_z^{a,\eps}$, we have $D_{\bar{y}}(Sa)_m\in \mathcal{I}_z^{a,\eps}$. By using Lemma \ref{AHE Integral} and Remark \ref{I Composition}, we can also similarly prove the following estimates on $d_m(x,y,z)$ and $d_m(x,y,\theta)$.
\begin{lemma}\label{dm}
	For each integer $m\geq 0$, $d_m(x,y,\theta)\in \mathcal{I}^{a,\eps}_\theta$ and $d_m(x,y,z)\in \mathcal{I}^{a,\eps}_z$. And we can choose
	\begin{align*}
	C_0(d_m(x,y,\theta))= C^{m+1}m!^{2a+2\eps}, && C_1(d_m(x,y,\theta))=C, &&b(d_m(x,y,\theta))=b,
	\end{align*}	
	\begin{align*}
	C_0(d_m(x,y,z))= C^{m+1}m!^{2a+2\eps}, && C_1(d_m(x,y,z))=C, &&b(d_m(x,y,z))=b,
	\end{align*}	
	where $C$ and $b$ are some positive constants independent of $m$.	
\end{lemma}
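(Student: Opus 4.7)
The plan is to mirror the structure of the proof of Lemma \ref{SAm}, with the key observation that an anti-holomorphic derivative $D_{\bar y}$ on a function in $\mathcal{A}^{a,\eps}_z$ automatically pushes it into $\mathcal{I}^{a,\eps}_z$, since the decay factor $\lambda_{b,|\beta|}$ in \eqref{AHG} turns on as soon as $|\beta|\geq 1$. So I would start by invoking Lemma \ref{Sam} to write $(Sa)_m(x,y,z)\in \mathcal{A}^{a,\eps}_z$ with $C_0((Sa)_m)=C^{m+1}m!^{2a+2\eps}$ and $C_1,b$ independent of $m$.

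Next, I would differentiate: by Lemma \ref{AHE CLOSED} applied to the differentiation claim with $\alpha=0$ and $\beta=(0,e_j,0)$ (a single $\bar y_j$-derivative), the function $D_{\bar y}(Sa)_m$ lies in $\mathcal{I}^{a,\eps}_z$, with the new constants $C_0\to C_0((Sa)_m)\cdot 2^a C_1$, $C_1\to 2^a C_1$, and the same $b$. Since the extra factor $2^a C_1$ is absorbed into the constant $C$, the growth in $m$ is still $C^{m+1}m!^{2a+2\eps}$ up to renaming $C$.

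Then I would integrate along the segment. Lemma \ref{AHE Integral} (the $\mathcal{I}$-version, i.e.\ the second statement) applied to $f(x,y,z)=D_{\bar y}(Sa)_m(x,y,z)$ gives directly that $d_m(x,y,z)=\int_0^1 f(x,tx+(1-t)y,z)\,dt\in \mathcal{I}^{a,\eps}_z$, with $C_0(d_m)=C_0(f)$, $C_1(d_m)=2^{a+\eps+1}C_1(f)$, and $b(d_m)=b(f)$; again after renaming, one gets $C_0(d_m)=C^{m+1}m!^{2a+2\eps}$ and $C_1,b$ independent of $m$. Finally, to pass to the $\theta$-variable, recall from Lemma \ref{AHE Inverse} that $z=z(x,y,\theta)\in \mathcal{A}^{a,\eps}_\theta$, so Remark \ref{I Composition} (the $\mathcal{I}$-version of Lemma \ref{AHE Composition}) gives $d_m(x,y,\theta)=d_m(x,y,z(x,y,\theta))\in \mathcal{I}^{a,\eps}_\theta$, and the constants are tracked by the formulas in Lemma \ref{AHE Composition}, yielding again $C_0=C^{m+1}m!^{2a+2\eps}$ with $m$-independent $C_1,b$.

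There is no real obstacle here, since every structural fact needed—closure under differentiation, under integration on line segments, and under composition with the $z\leftrightarrow\theta$ change of coordinates—has been prepared in Section~\ref{AHEoGF} in both the $\mathcal{A}$- and $\mathcal{I}$-versions. The only thing requiring care is bookkeeping the constants so that the $m!^{2a+2\eps}$ growth on $C_0$ is preserved while $C_1$ and $b$ remain independent of $m$; since each of the three operations (differentiation, line-segment integration, composition with the fixed diffeomorphism $z(x,y,\theta)$) multiplies $C_1$ by a fixed factor depending only on $a$, $\eps$, $n$, and the intrinsic constants of $\psi$, the final constants can all be absorbed into one $m$-independent $C$.
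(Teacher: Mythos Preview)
Your proposal is correct and follows exactly the approach the paper sketches just before the lemma: use Lemma~\ref{Sam} to get $(Sa)_m\in\mathcal{A}^{a,\eps}_z$, observe that $D_{\bar y}(Sa)_m\in\mathcal{I}^{a,\eps}_z$, then apply the $\mathcal{I}$-version of Lemma~\ref{AHE Integral} and Remark~\ref{I Composition}. Your constant-tracking is slightly more explicit than what the paper writes, but the argument is the same.
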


Now that we have the estimates on $(SA)_m$ in Lemma \ref{SAm}, by using the operator $S^{-1}$, we obtain the following estimates on $A_m$.
\begin{lemma}\label{Am}
	For each integer $m\geq 0$, $A_m(x,y,\theta)\in \mathcal{A}^{a,\eps}_\theta$ and $A_m(x,y,z)\in \mathcal{A}^{a,\eps}_z$. And we can choose
	\begin{align*}
	C_0(A_m(x,y,\theta))= C^{m+1}m!^{2a+2\eps}, && C_1(A_m(x,y,\theta))=C, &&b(A_m(x,y,\theta))=b,
	\end{align*}	
	\begin{align*}
	C_0(A_m(x,y,z))= C^{m+1}m!^{2a+2\eps}, && C_1(A_m(x,y,z))=C, && b(A_m(x,y,z))=b,
	\end{align*}	
	where $C$ and $b$ are some positive constants independent of $m$.
\end{lemma}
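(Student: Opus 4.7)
The plan is to invert $S$ on the expansion of $SA$ so as to recover $A$ in the $(x,y,\theta)$ coordinates, then transfer to the $(x,y,z)$ coordinates by composition. Since $S^{-1} = e^{-D_\theta \cdot D_y / k}$, I would start from the formal identity $A = S^{-1}(SA)$, which when expanded in powers of $1/k$ gives
\begin{equation*}
A_m(x,y,\theta) = \sum_{i+j=m} \frac{(-1)^i}{i!}(D_\theta \cdot D_y)^i (SA)_j(x,y,\theta).
\end{equation*}
This is structurally identical to the formula $(Sa)_m = \sum_{i+j=m}\frac{1}{i!}(D_\theta\cdot D_y)^i a_j$ already estimated in Lemma \ref{Sam}, with $(SA)_j$ now playing the role of $a_j$; the case $m=0$ is trivial since $A_0 = (SA)_0 = 0$.

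Next, I would repeat the derivative estimate from the proof of Lemma \ref{Sam} verbatim. For multi-indices $\alpha, \beta$ and $v' = (x,y,\theta)$ one expands
\begin{equation*}
D_{v'}^{\alpha}D_{\bar{v}'}^{\beta} A_m(x,y,\theta) = \sum_{i+j=m}\sum_{|\delta|=i}\frac{(-1)^i}{i!}\binom{i}{\delta} D_{v'}^{\alpha+(0,\delta,\delta)} D_{\bar{v}'}^{\beta}(SA)_j(x,y,\theta),
\end{equation*}
inserts the bounds from Lemma \ref{SAm} (namely $C_0((SA)_j) = C^{j+1}j!^{2a+2\eps}$, $C_1 = C$, $b = b$, all independent of $j$), and then uses the elementary inequalities $(\alpha+(0,\delta,\delta))!\leq 2^{|\alpha|+2|\delta|}\alpha!(\delta!)^2$, $\sum_{|\delta|=i}\binom{i}{\delta} = n^i$, and $i!\,j!\leq m!$ for $i+j=m$. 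Collecting the resulting factors and absorbing polynomial growth into the base constant gives
\begin{equation*}
\left|D_{v'}^{\alpha}D_{\bar{v}'}^{\beta}A_m(x,y,\theta)\right|\leq C^{m+1+|\alpha+\beta|}\alpha!^{a+\eps}\beta!^{a+\eps} m!^{2a+2\eps}\lambda_{b,|\beta|}(x,y,z)
\end{equation*}
for a (possibly enlarged) constant $C$ independent of $m$, establishing $A_m(x,y,\theta)\in \mathcal{A}^{a,\eps}_\theta$ with the claimed constants.

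Finally, to pass to $(x,y,z)$ coordinates I would write $A_m(x,y,z) = A_m(x,y,\theta(x,y,z))$ and apply Lemma \ref{AHE Composition}. Since $\theta(x,y,z) \in \mathcal{A}^{a,\eps}_z$ with constants independent of $m$ (already used in the proof of Lemma \ref{amxyz}), the composition yields $A_m(x,y,z) \in \mathcal{A}^{a,\eps}_z$ with a (possibly enlarged) $m$-independent constant $C$ and the same exponent $b$ — exactly the claimed bounds. The whole argument is essentially the same combinatorial bookkeeping as in Lemmas \ref{Sam} and \ref{SAm}, so there is no substantive obstacle; the only point requiring real attention is verifying that every constant in the chain can be chosen uniformly in $m$, which is automatic from the $m$-independence of the constants furnished by Lemma \ref{SAm} and of the composition data $\theta(x,y,z)$.
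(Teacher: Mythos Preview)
Your proposal is correct and matches the paper's approach exactly: the paper states that the proof of Lemma~\ref{Am} ``follows in the same way as that of Lemma~\ref{Sam}'' and omits the details, and your argument---writing $A_m = \sum_{i+j=m}\frac{(-1)^i}{i!}(D_\theta\cdot D_y)^i (SA)_j$, repeating the combinatorial estimate from Lemma~\ref{Sam} with the bounds on $(SA)_j$ from Lemma~\ref{SAm}, then composing with $\theta(x,y,z)$ via Lemma~\ref{AHE Composition}---is precisely that route.
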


Formally, we write $d=\sum_{m=0}^\infty \frac{d_m}{k^m}$. Similarly, by using Lemma \ref{dm} and the operator $S^{-1}$, we obtain the estimates on $(S^{-1}d)_m$.
\begin{lemma}\label{S-1dm}
	For each integer $m\geq 0$, $(S^{-1}d)_m(x,y,\theta)\in \mathcal{I}^{a,\eps}_\theta$ and $(S^{-1}d)_m(x,y,z)\in \mathcal{I}^{a,\eps}_z$. And we can choose
	\begin{align*}
	C_0((S^{-1}d)_m(x,y,\theta))= C^{m+1}m!^{2a+2\eps}, && C_1((S^{-1}d)_m(x,y,\theta))=C, &&b((S^{-1}d)_m(x,y,\theta))=b,
	\end{align*}	
	\begin{align*}
	C_0((S^{-1}d)_m(x,y,z))= C^{m+1}m!^{2a+2\eps}, && C_1((S^{-1}d)_m(x,y,z))=C, && b((S^{-1}d)_m(x,y,z))=b,
	\end{align*}	
	where $C$ and $b$ are some positive constants independent of $m$.
\end{lemma}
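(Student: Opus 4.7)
The proof will follow the same pattern as Lemma \ref{Sam}, but with the inverse operator $S^{-1}$ applied to $d$ instead of $S$ applied to $a$, and crucially we will track that the exponential decay factor is already present at the $\beta = 0$ level because $d_j \in \mathcal{I}^{a,\eps}_z$. My plan is first to expand the formal operator
\begin{equation*}
S^{-1} = e^{-(D_\theta \cdot D_y)/k} = \sum_{i=0}^\infty \frac{(-1)^i (D_\theta \cdot D_y)^i}{i! \, k^i},
\end{equation*}
so that
\begin{equation*}
(S^{-1}d)_m(x,y,\theta) = \sum_{i+j=m} \frac{(-1)^i}{i!} (D_\theta \cdot D_y)^i d_j(x,y,\theta) = \sum_{i+j=m} \sum_{|\delta|=i} \frac{(-1)^i}{i!} \binom{i}{\delta} D_\theta^\delta D_y^\delta d_j(x,y,\theta).
\end{equation*}

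Next, I would apply an arbitrary derivative $D_{v'}^\alpha D_{\bar{v}'}^\beta$ (where $v' = (x,y,\theta)$) term by term and invoke Lemma~\ref{dm} to bound each derivative of $d_j$. Since $d_j \in \mathcal{I}^{a,\eps}_\theta$, we get
\begin{equation*}
\left| D_{v'}^{\alpha + (0,\delta,\delta)} D_{\bar{v}'}^\beta d_j(x,y,\theta) \right| \leq C^{j+1 + |\alpha+\beta| + 2|\delta|} \, j!^{2a+2\eps} (\alpha + (0,\delta,\delta))!^{a+\eps} \beta!^{a+\eps} \, \lambda_{b,0}(x,y,z),
\end{equation*}
where the key point is that the exponential factor $\lambda_{b,0}$ appears with no dependence on $|\beta|$, since $d_j$ already lies in the $\mathcal{I}$-class.

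Then the combinatorial bookkeeping proceeds exactly as in Lemma~\ref{Sam}: using $\binom{i}{\delta} \leq n^i$, $(\alpha + (0,\delta,\delta))!^{a+\eps} \leq 2^{(a+\eps)(|\alpha| + 2i)} \alpha!^{a+\eps} i!^{2(a+\eps)}$, and $i!^{2a+2\eps} j!^{2a+2\eps} \leq m!^{2a+2\eps}$ when $i + j = m$, one obtains
\begin{equation*}
\left| D_{v'}^\alpha D_{\bar{v}'}^\beta (S^{-1}d)_m(x,y,\theta) \right| \leq C^{m+1 + |\alpha+\beta|} \, m!^{2a+2\eps} \alpha!^{a+\eps} \beta!^{a+\eps} \, \lambda_{b,0}(x,y,z),
\end{equation*}
for a new constant $C$ independent of $m, \alpha, \beta$, giving $(S^{-1}d)_m(x,y,\theta) \in \mathcal{I}^{a,\eps}_\theta$ with the stated constants. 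Finally, the statement for $(S^{-1}d)_m(x,y,z)$ is obtained by composing with the map $z \mapsto \theta(x,y,z)$, which lies in $\mathcal{A}^{a,\eps}_z$, and invoking Lemma~\ref{AHE Composition} together with Remark~\ref{I Composition} (which preserves $\mathcal{I}$-class under such composition), keeping track of how the constants transform. No step is a genuine obstacle here; the only mild care needed is verifying that the combinatorial sum over $i + j = m$ can be absorbed into a single geometric constant $C^{m+1}$ without spoiling the $m!^{2a+2\eps}$ growth, which works because $2a+2\eps > 1$ gives plenty of room.
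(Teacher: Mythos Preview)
Your approach is correct and is exactly what the paper intends: it states that the proofs of Lemmas~\ref{Am} and~\ref{S-1dm} follow in the same way as Lemma~\ref{Sam} and omits the details, so your expansion of $S^{-1}=e^{-(D_\theta\cdot D_y)/k}$, the termwise derivative bound via Lemma~\ref{dm}, and the passage to the $z$-picture via Lemma~\ref{AHE Composition} and Remark~\ref{I Composition} are precisely the intended argument. One small notational slip: by the paper's definition $\lambda_{b,0}(x,y,z)=\exp\big(-b(1-\delta_0(0))\max\{\dots\}^{-1/(a-1)}\big)=1$, so writing $\lambda_{b,0}$ literally gives no decay; what you want (and clearly intend) is the full exponential factor $\exp\big(-b\max\{|x-\bar z|,|y-\bar z|\}^{-1/(a-1)}\big)$, e.g.\ $\lambda_{b,1}$, which is exactly the content of the $\mathcal{I}^{a,\eps}$ bound.
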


Since the proof of Lemma \ref{Am} and \ref{S-1dm} follow in the same way as that of Lemma \ref{Sam}, we omit them here. 

We are now ready to estimate $A_N$, $A^{(N)}$ and $D_\theta \cdot A_N$ on the good contour $\Lambda=\{(y,\theta(x,y,\bar{y})): x,y\in B^n(0,1)\}$. For any smooth function $f(x,y,\theta)$, we denote $$\|f(x,y,\theta)\|_{L^\infty(\Lambda)}:=\|f(x,y,\theta(x,y,\bar{y}))\|_{L^\infty (B^{n}(0,1) \times B^{n}(0,1))}.$$

\begin{lemma}\label{error term 1 for Gevrey}
	We have
	\begin{equation}
	\|A^{(N)}(x,y,\theta(x, y, \bar y))\|_{L^\infty(\Lambda)}\leq C k + \frac{C^{N} N!^{2a+2\eps}}{k^N},
	\end{equation}
	where $C$ is some constant independent of $N$ and $k$. 
\end{lemma}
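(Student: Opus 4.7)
The plan is to apply the pointwise bound from Lemma \ref{Am} term by term to the partial sum $A^{(N)}=\sum_{m=1}^N A_m/k^m$ (recall $A_0\equiv 0$) and then carefully sum a numerical series whose terms first decrease in $m$, attain their maximum near the critical scale $m\sim k^{1/(2a+2\eps)}$, and then grow factorially. The $Ck$ contribution will account for the bulk of the series up to the turning point, while $C^N N!^{2a+2\eps}/k^N$ will absorb the tail beyond it.

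First, applying Lemma \ref{Am} with $\alpha=\beta=0$ and noting that $(x,y,\theta(x,y,\bar y))\in\Lambda$ lies in the region where the $\mathcal A^{a,\eps}_\theta$-estimates hold, I obtain, uniformly on $\Lambda$,
$$|A^{(N)}(x,y,\theta(x,y,\bar y))| \;\leq\; \sum_{m=1}^{N}\frac{C_0^{\,m+1}\,m!^{\,2a+2\eps}}{k^m} \;=:\;\sum_{m=1}^N f(m),$$
for some constant $C_0$ independent of $m$ and $k$. The ratio $f(m+1)/f(m)=C_0(m+1)^{2a+2\eps}/k$ crosses $1$ near $m=N_0:=\bigl\lfloor (k/C_0)^{1/(2a+2\eps)}\bigr\rfloor$, so $f$ is essentially decreasing on $[1,N_0]$ and increasing on $[N_0,\infty)$.

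For the ``plateau'' $m\leq N_0$, the elementary inequality $m!\leq m^m$ gives
$$f(m)\;\leq\;C_0\left(\frac{C_0\,m^{2a+2\eps}}{k}\right)^{m}\;\leq\;C_0,$$
since $C_0 m^{2a+2\eps}/k\leq 1$ for $m\leq N_0$. Hence
$$\sum_{m=1}^{\min(N,N_0)} f(m) \;\leq\; C_0\,N_0 \;\leq\; C_0\,k^{1/(2a+2\eps)} \;\leq\; Ck,$$
using $1/(2a+2\eps)<1$ and, if necessary, enlarging $C$. If $N\leq N_0$ the proof is complete; otherwise, on the interval $[N_0+1,N]$ the sequence $f$ is monotone increasing, so each term is dominated by $f(N)$, giving
$$\sum_{m=N_0+1}^{N} f(m) \;\leq\; N\cdot\frac{C_0^{\,N+1}\,N!^{\,2a+2\eps}}{k^N} \;\leq\; \frac{C^{N}\,N!^{\,2a+2\eps}}{k^N},$$
where the last inequality follows from $N\leq 2^N$, allowing the prefactor to be absorbed into a slightly enlarged constant $C$. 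Adding the two contributions yields the claimed estimate.

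The only genuinely delicate point is the Stirling-type cancellation ensuring $f(m)\leq C_0$ throughout $m\leq N_0$; without it, the plateau would only be controlled by $N_0\cdot\max_m f(m)$, and one would have to make a more careful argument to avoid an extra factor of $k^{1/(2a+2\eps)}$ in the first summand. Everything else is bookkeeping built directly on Lemma \ref{Am}.
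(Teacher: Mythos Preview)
Your proof is correct and follows essentially the same strategy as the paper's: split the sum $\sum_{m=1}^N C_0^{m+1}m!^{2a+2\eps}/k^m$ at the turning point $N_0=\lfloor (k/C_0)^{1/(2a+2\eps)}\rfloor$, bound the plateau by a multiple of $N_0\leq k$, and bound the tail by $(N-N_0)$ times the last term. Your use of the crude inequality $m!\leq m^m$ in place of Stirling's formula is a minor simplification over the paper, which carries an extra $m^{a+\eps}$ from Stirling and hence bounds the plateau by $N_0^{a+\eps+1}$ rather than $N_0$; both are $\leq Ck$, so the difference is cosmetic.
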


\begin{proof} 
	Note that by the definition of $A^{(N)}$  and estimates on each $A_m$ in Lemma \ref{Am}, we have
	\begin{align*}
	\|A^{(N)}\|_{L^\infty (\Lambda  )} & \leq \frac{C1!^{2a+2\eps}}{k}+\frac{C^2 2! ^{2a+2\eps}}{k^2}+\cdots+\frac{C^N N! ^{2a+2\eps}}{k^N}.\\
	\end{align*}
	We need to study the function $ \frac{C^xx^{(2a+2\eps)x}}{e^{(2a+2\eps)x}k^x}$. To find the minimum of this function we consider
	\begin{equation*}
	f(x)=\log \frac{C^xx^{(2a+2\eps)x}}{e^{(2a+2\eps)x}k^x}=x\log C+(2a+2\eps)x\log x-(2a+2\eps)x-x\log k \hspace{12 pt}, \mbox{ for } x\in (0,\infty).
	\end{equation*}
	Since
	\begin{equation*}
	f'(x)=\log C+(2a+2\eps)\log x-\log k,
	\end{equation*}
	the only critical point of $f$ is $x_0=(\frac{k}{C})^{\frac{1}{2a+2\eps}}$, and the function $f$ is decreasing on the interval $(0, x_0]$ and increasing on the interval $[x_0, \infty)$. Hence if we take $N_0=[(\frac{k}{C})^{\frac{1}{2a+2\eps}}]$, then by using Stirling's formula twice
	\begin{align*}
	\sum_{m=1}^N \frac{C^m m! ^{2a+2\eps}}{k^m} 
	& \leq C'  \sum_{m=1}^N m^{a+\eps}\frac{C^m m^{m(2a+2\eps)}}{e^{m(2a+2\eps)}k^m} 
	\\
	& \leq C' \left ( N_0^{a+\eps+1} + N^{a+\eps}(N-N_0)\frac{C^N N^{(2a+2\eps)N}}{e^{(2a+2\eps)N}k^N} \right ) 
	\\ 
	& \leq C' \left (\left(\frac{k}{C}\right)^{\frac{a+\eps+1}{2a+2\eps}} + N^{a+\eps+1}\frac{C^N N^{(2a+2\eps)N}}{e^{(2a+2\eps)N}k^N} \right )
	\\
	&  \leq C' \left (k +  \frac{C''^N N!^{2a+2\eps}}{k^N} \right ).
	\end{align*}
	The result follows by replacing $C'$ and $C''$ by a larger constant $C$.
\end{proof}
We also need the estimates on  the anti-holomorphic derivatives of $A_N$ and $A^{(N)}$. 
\begin{lemma}\label{error term 4 for Gevrey}
	There exists positive constants $C$ and $b$ independent of $N$ and $k$ such that for any $(x,y) \in B^{n}(0,1)\times B^{n}(0,1)$, we have
	\begin{equation*}
	\left | \left ( D_{\bar{\theta}}\cdot A^{(N)} \right ) (x,y,\theta(x, y, \bar y) \right | \leq \left(C k + \frac{C^{N}N!^{2a+2\eps}}{k^N}\right)\exp\left(-b{|x-y|^{-\frac{1}{a-1}}}\right),
	\end{equation*}
	\begin{equation*}
	\left | \left ( D_{\bar{y}}\cdot A^{(N)} \right ) (x,y,\theta(x, y, \bar y) \right | \leq \left(C k + \frac{C^{N}N!^{2a+2\eps}}{k^N}\right)\exp\left(-b{|x-y|^{-\frac{1}{a-1}}}\right).
	\end{equation*}
\end{lemma}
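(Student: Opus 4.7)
The plan is to combine the bounds on each $A_m$ from Lemma \ref{Am} with the Stirling-type summation argument already used in Lemma \ref{error term 1 for Gevrey}, now applied to the anti-holomorphic derivatives of $A_m$. Since $A_m \in \mathcal{A}^{a,\eps}_\theta$ with constants $C_0(A_m) = C^{m+1} m!^{2a+2\eps}$, $C_1(A_m) = C$, and $b(A_m) = b$ all independent of $m$, the defining estimate in Definition \ref{A_theta} applied with multi-index $\beta$ of length $1$ activates the factor $1 - \delta_0(|\beta|) = 1$, and hence for each component $j = 1, \dots, n$ one obtains
\begin{equation*}
\left|D_{\bar\theta_j} A_m(x,y,\theta(x,y,z))\right|,\ \left|D_{\bar y_j} A_m(x,y,\theta(x,y,z))\right| \leq C^{m+2} m!^{2a+2\eps} \exp\!\left(-b \max\{|x-\bar z|, |y-\bar z|\}^{-\frac{1}{a-1}}\right).
\end{equation*}

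The key geometric observation is that the good contour $\Lambda$ is defined by $z = \bar y$, so on $\Lambda$ we have $|y - \bar z| = 0$ and $|x - \bar z| = |x - y|$; thus the maximum in the exponent collapses to $|x - y|$, producing the target decay factor $\exp(-b|x-y|^{-1/(a-1)})$. Summing the divergence $D_{\bar\theta} \cdot A_m = \sum_j \partial_{\bar\theta_j} A_{m,j}$ over the $n$ components, dividing by $k^m$, and summing $m$ from $1$ to $N$ via the triangle inequality yield
\begin{equation*}
\left|(D_{\bar\theta} \cdot A^{(N)})(x,y,\theta(x,y,\bar y))\right| \leq n\, \exp\!\left(-b|x-y|^{-\frac{1}{a-1}}\right) \sum_{m=1}^N \frac{C^{m+2} m!^{2a+2\eps}}{k^m},
\end{equation*}
and the analogous bound with $D_{\bar y}$ in place of $D_{\bar\theta}$.

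The final step is to control $\sum_{m=1}^N C^m m!^{2a+2\eps}/k^m$, which is exactly the series estimated in the proof of Lemma \ref{error term 1 for Gevrey}. Splitting the sum at the critical index $N_0 = \lfloor (k/C)^{1/(2a+2\eps)} \rfloor$ and invoking Stirling's formula gives an upper bound of the form $C'\bigl(k + C^N N!^{2a+2\eps}/k^N\bigr)$; after absorbing constants one recovers the claimed estimates. I do not anticipate any serious obstacle: the argument is essentially a routine extension of Lemma \ref{error term 1 for Gevrey}, and the only new ingredient is the systematic exploitation of the exponential factor built into Definition \ref{A_theta} whenever an anti-holomorphic derivative is present. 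The only mild bookkeeping concern is making sure the constants $C$ and $b$ remain independent of $N$ and $k$, which is automatic because the constants in Lemma \ref{Am} are uniform in $m$.
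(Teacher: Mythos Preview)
Your proposal is correct and follows exactly the approach the paper indicates (the paper omits the proof, stating only that it ``follows in a similar way as the previous lemma by using Lemma \ref{Am} and the only difference is the extra exponential factor that comes from the anti-holomorphic derivatives of $A_m$ since $A_m(x,y,\theta)\in \mathcal{A}_\theta^{a,\eps}$''). Your explicit identification of the contour relation $z=\bar y$ yielding $\max\{|x-\bar z|,|y-\bar z|\}=|x-y|$ and your reuse of the Stirling-splitting summation from Lemma \ref{error term 1 for Gevrey} are precisely the two ingredients the paper has in mind.
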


We omit the proof of this lemma here since it follows in a similar way as the previous lemma by using Lemma \ref{Am} and the only difference is the extra exponential factor that comes from the anti-holomorphic derivatives of $A_m$ since $A_m(x,y,\theta)\in \mathcal{A}_\theta^{a,\eps}$.

Another key lemma is:
\begin{lemma}\label{error term 3 for Gevrey} 
	There exists positive constants $C$ and $b$ independent of $N$ and $k$, such that for any $(x,y) \in B^{n}(0,1)\times B^{n}(0,1)$, we have
	\begin{equation*} 
	\left|\left(S^{-1}\int_0^1 (D_{\bar{y}}Sa)(x,tx+(1-t)y,z)dt\right)_m  (x,y,\bar y) \right|
	\leq C^{m+1}m!^{2a+2\eps}\exp\left(-b{|x-y|^{-\frac{1}{a-1}}}\right),
	\end{equation*}
	and
	\begin{align*}
	\left|\left(S^{-1}\int_0^1 (D_{\bar{y}}Sa)(x,tx+(1-t)y,z)dt\right)^{(N)} (x, y,\bar{y})\right| 
	\leq 
	\left(C k + \frac{C^{N}N!^{2a+2\eps}}{k^N}\right)\exp\left(-b{|x-y|^{-\frac{1}{a-1}}}\right).
	\end{align*}
\end{lemma}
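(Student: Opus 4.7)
The plan is to recognize that the expression in question is exactly $(S^{-1}d)_m(x,y,z)$ (respectively its partial sum up to order $N$) evaluated at $z = \bar y$, where $d(x,y,z) = \int_0^1(D_{\bar y}Sa)(x, tx+(1-t)y, z)\,dt$ is the formal series $\sum_{m\geq 0} d_m(x,y,z)/k^m$ introduced just before Lemma~\ref{dm}. Once this identification is made, Lemma~\ref{S-1dm} is directly applicable: it puts $(S^{-1}d)_m$ in the class $\mathcal{I}_z^{a,\eps}$ with constants $C_0 = C^{m+1}m!^{2a+2\eps}$, $C_1 = C$, and $b$ independent of $m$.

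To obtain the first bound I would apply the defining inequality \eqref{eq 4.13} for $\mathcal{I}_z^{a,\eps}$ with $\alpha=\beta=0$, which gives the pointwise estimate
$$|(S^{-1}d)_m(x,y,z)| \leq C^{m+1}m!^{2a+2\eps}\exp\bigl(-b\max\{|x-\bar z|, |y-\bar z|\}^{-\frac{1}{a-1}}\bigr),$$
and then specialize $z=\bar y$; since $\bar z = y$, the maximum collapses to $\max\{|x-y|,0\} = |x-y|$, producing exactly the first displayed estimate in the lemma.

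For the second bound the plan is to sum the first estimate weighted by $1/k^m$ from $m=0$ to $N$, pulling out the (now $m$-independent) exponential factor. The residual numerical series $\sum_{m=0}^N C^{m+1}m!^{2a+2\eps}/k^m$ is then controlled by the same critical-point argument already carried out in the proof of Lemma~\ref{error term 1 for Gevrey}: the logarithm of the summand is minimized at $m_0=(k/C)^{1/(2a+2\eps)}$, and splitting the sum at $N_0 = [m_0]$ and applying Stirling's formula to each piece yields a bound of the shape $C\bigl(k + C^N N!^{2a+2\eps}/k^N\bigr)$, as required.

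The main obstacle is in fact absent here, because all the substantive analytic work has already been absorbed into Lemma~\ref{S-1dm} and, further upstream, into the Gevrey growth bound for the Bergman coefficients in Theorem~\ref{MainLemma}. The only genuinely attentive steps are (i) the bookkeeping identification of the expression as the $m$-th (resp.\ partial) coefficient of $S^{-1}d$, and (ii) the observation that the two-variable maximum in the definition of $\mathcal{I}_z^{a,\eps}$ degenerates to the single distance $|x-y|$ under the specialization $z=\bar y$, so that the exponential decay survives rather than trivializing.
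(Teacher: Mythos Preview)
Your proposal is correct and matches the paper's own proof essentially verbatim: the paper states that the first inequality follows directly from Lemma~\ref{S-1dm} and that the second follows by repeating the summation/Stirling argument from the proof of Lemma~\ref{error term 1 for Gevrey}. Your additional remarks on the degeneration of $\max\{|x-\bar z|,|y-\bar z|\}$ to $|x-y|$ under $z=\bar y$ simply make explicit what the paper leaves implicit.
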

\begin{proof}
	The first inequality directly follows from Lemma \ref{S-1dm} and
	the second inequality follows by the same argument as in the proof of Lemma \ref{error term 1 for Gevrey}.
\end{proof}

Recalling \eqref{a and nabla A} and using Lemma \ref{Am} and \ref{error term 3 for Gevrey} together, we obtain the following corollary.
\begin{cor}\label{prop 7.6}
	There exists positive constants $C$ and $b$ independent of $N$, such that for any $(x,y) \in B^{n}(0,1)\times B^{n}(0,1)$, we have	
	\begin{align*}
	\left|a^{(N)}-\nabla \left(A^{(N+1)}\right)\right|(x,y,\bar{y})
	\leq \frac{C^{N+1}(N+1)!^{2a+2\eps}}{k^{N+1}}
	+|x-y|\ \left(C k + \frac{C^{N}N!^{2a+2\eps}}{k^N}\right)\exp\left(-b|x-y|^{-\frac{1}{a-1}}\right).
	\end{align*}
\end{cor}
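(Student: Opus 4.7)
The plan is to simply combine the exact decomposition \eqref{a and nabla A}, which expresses $a^{(N)}-\nabla(A^{(N+1)})$ as the sum of two explicit terms, with the two estimates we have already built up: Lemma~\ref{Am} for the first summand and Lemma~\ref{error term 3 for Gevrey} for the second summand. Once both individual bounds are in place, the triangle inequality gives the statement.

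More concretely, recall that \eqref{a and nabla A} reads
\begin{equation*}
a^{(N)}-\nabla(A^{(N+1)})
= \frac{D_\theta\cdot A_{N+1}}{k^{N+1}}
-(\overline{x-y})\cdot\Bigl(S^{-1}\!\!\int_0^1 (D_{\bar y}Sa)(x,tx+(1-t)y,z)\,dt\Bigr)^{(N)}.
\end{equation*}
For the first term on the right, Lemma~\ref{Am} gives $A_{N+1}\in \mathcal A^{a,\eps}_\theta$ with $C_0(A_{N+1})=C^{N+2}(N+1)!^{2a+2\eps}$ and with $C_1(A_{N+1}),b(A_{N+1})$ independent of $N$. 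Since applying a $D_\theta$ derivative keeps us in $\mathcal A^{a,\eps}_\theta$ (by Lemma~\ref{AHE CLOSED}) at the cost of multiplying the $C_0$ constant by a factor $2^aC_1(A_{N+1})$ that is uniform in $N$, we absorb this factor into the base constant $C$ and obtain
\begin{equation*}
\left|\frac{D_\theta\cdot A_{N+1}}{k^{N+1}}\right|(x,y,\bar y)
\leq \frac{C^{N+1}(N+1)!^{2a+2\eps}}{k^{N+1}},
\end{equation*}
uniformly on $B^n(0,1)\times B^n(0,1)$ (after enlarging $C$ once more if necessary).

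For the second term, the factor $|\overline{x-y}|=|x-y|$ is pulled outside, and the remaining factor is estimated by the second bound in Lemma~\ref{error term 3 for Gevrey}, giving
\begin{equation*}
\left|(\overline{x-y})\cdot\Bigl(S^{-1}\!\!\int_0^1 (D_{\bar y}Sa)(x,tx+(1-t)y,z)\,dt\Bigr)^{(N)}\right|(x,y,\bar y)
\leq |x-y|\left(Ck+\frac{C^N N!^{2a+2\eps}}{k^N}\right)e^{-b|x-y|^{-1/(a-1)}}.
\end{equation*}
Adding the two inequalities via the triangle inequality yields the claim, with $C$ chosen as the maximum of all the constants that appeared above and $b$ as the minimum of the corresponding exponential-decay constants (both uniform in $N$ and $k$).

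Since all of the genuinely hard analytic work has already been carried out in the preceding lemmas — in particular the Gevrey-type Bergman coefficient bound of Theorem~\ref{MainLemma} that feeds into Lemma~\ref{Am} and Lemma~\ref{error term 3 for Gevrey} — there is no real obstacle left at this stage; the only bookkeeping item is ensuring that the constants $C$ and $b$ can be chosen independently of $N$, which is immediate from the uniformity assertions in those lemmas.
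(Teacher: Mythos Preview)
Your proof is correct and follows exactly the paper's approach: the paper simply says the corollary follows by recalling \eqref{a and nabla A} and combining Lemma~\ref{Am} with Lemma~\ref{error term 3 for Gevrey}, which is precisely the decomposition and pair of estimates you invoke.
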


Now we are ready to prove Proposition \ref{LocalBergmanKernel}.

We claim that
\begin{equation}\label{LocalBergmanKernel2}
u(x)=\text{Op}_\Lambda \left (1+a^{(N)} \right ) ( \chi u)
+k^n\mathcal R_{N+1}( \phi, k) e^{\frac{k\phi(x)}{2}}\|u\|_{k\phi},
\end{equation} where uniformly for $x \in B(0, \frac14 )$ we have
\begin{equation}\label{R}
|\mathcal R_{N+1}( \phi, k)| \leq\frac{C^{N+1}(N+1)!^{2a+2\eps}}{k^{N+1}},
\end{equation}
and the integral kernel of $\text{Op}_\Lambda \left (1+a^{(N)} \right )$  is almost holomorphic. The complex conjugate of this kernel is given by 
\begin{equation}\label{KandB}
\oo{K^{(N)}_{k,x}(y)}=\left(\frac{k}{\pi}\right)^n  e^{k\psi(x,\bar{y})}\left(1+ a^{(N)}(x, y, \theta(x, y, \bar y)\right) \Delta_0(x, y, \theta(x, y, \bar y))^{-1},
\end{equation}
which by the relation \eqref{aB} is reduced to 
$$\oo{K^{(N)}_{k,x}(y)}=\left(\frac{k}{\pi}\right)^n  e^{k\psi(x,\bar{y})} B^{(N)}(x, \bar y). $$
Hence  ${K^{(N)}_{k,x}(y)}$ is almost holomorphic in $y$ because $B(x, z)$ is almost holomorphic. 

In the light of \eqref{BBS 2.1}, to prove \eqref{LocalBergmanKernel2} it suffices to show that
$$\forall x \in B(0, \frac14): \quad \left | \text{Op}_\Lambda \left (a^{(N)} \right )( \chi u)(x)  \right | \leq \frac{C^{N+1}(N+1)!^{2a+2\eps}}{k^{N+1-n}} e^{\frac{k\phi(x)}{2}}\|u\|_{k\phi}. $$
By definition,
$$\text{Op}_\Lambda \left (a^{(N)} \right )( \chi u)(x)= c_n \left(\frac{k}{2\pi}\right)^n\int_\Lambda e^{k\theta\cdot (x-y)}u(y)\chi(y)\, a^{(N)} \, d\theta\wedge dy . $$
It is easy to see that using integration by parts (see for example the proof of Proposition 2.2 in \cite{BBS}), we get
\begin{align*}\label{a^(N)}
\int_\Lambda &e^{k\theta\cdot (x-y)}u(y)\chi(y)a^{(N)}d\theta\wedge dy=
\\
&-\int_\Lambda d\chi\wedge u(y)e^{k\theta\cdot(x-y)}A^{(N+1)}\wedge dy
+\int_\Lambda e^{k\theta\cdot (x-y)}u(y)\chi(y)\left(a^{(N)}-\nabla \left(A^{(N+1)}\right) \right)d\theta\wedge dy
\\
&-\sum_{i,j}\int_{\Lambda} e^{k\theta\cdot (x-y)}u(y)\chi(y)\frac{\partial A_i}{\partial \oo{\theta}_j}d\oo{\theta}_j\wedge\widehat{d\theta_i}\wedge dy
-\sum_{i,j}\int_{\Lambda} e^{k\theta\cdot (x-y)}u(y)\chi(y)\frac{\partial A_i}{\partial \oo{y}_j}d\oo{y}_j\wedge\widehat{d\theta_i}\wedge dy
\end{align*}
In the first integral, we have  identified  the $n$-vector $A$ as an $(n-1, 0)$ form defined by $ A= \sum_{j=1}^n A_j \widehat{d\theta_j}$, where $\widehat{d\theta_j}$ is the wedge product of all $\{d \theta_k\}_{k \neq j}$ such that $d\theta_j \wedge \widehat{d\theta_j} = d\theta$. 

We now estimate the integrals on the right hand side of the above equality. For the first integral, as $d\chi(y)=0$ for $y\in B^{n}(0,\frac{1}{2})$, we have $|x-y|\geq \frac{1}{4}$ for $x\in B^n(0,\frac{1}{4})$ or otherwise the integrand vanishes. If we take $\theta^*(x,y,z)=\int_0^1( D_{\bar{y}}\psi)(tx+(1-t)y,z)dt$, then by Taylor expansion we have
\begin{equation*}
2\Real\left(\theta\cdot (x-y)\right)+2\Real\left(\theta^*\cdot (x-y)\right)=2\Real\left(\psi(x,\bar{y})-\psi(y,\bar{y})\right)\leq \phi(x)-\phi(y)-\delta|x-y|^2,
\end{equation*} 
where $	\theta^*=\theta^*(x,y,\bar{y})$ and $\delta$ is some positive constant. Note that $\theta^*(x,y,z)\in \mathcal{I}_z^{a,\eps}$ by Lemma \ref{AHE Integral}. We have $\theta^*(x,y,\bar{y})=O(|x-y|^{\infty})$. Thus by rescaling the unit ball, $\theta^*(x,y,\bar{y})$ can be absorbed by $\delta|x-y|^2$. Therefore, by changing $\delta$ to a smaller constant, the integrand of the first integral is bounded by some constant times
\begin{equation*}
|u(y)|e^{\frac{k\phi(x)}{2}-\frac{k\phi(y)}{2}-\delta k}\left|A^{(N+1)}\right|.
\end{equation*}
So by using Cauchy-Schwartz inequality, we obtain the first integral is bounded by some constant times
\begin{equation*}
\|A^{(N+1)}\|_{L^\infty(\Lambda)}e^{\frac{k\phi(x)}{2}} \|u\|_{k\phi}e^{-\delta k}.
\end{equation*}
By Lemma \ref{error term 1 for Gevrey} and $ke^{-\delta k}\leq \frac{(N+2)!}{k^{N+1}\delta^{N+2}}$, the first integral is bounded by
$$\frac{C^{N+1}(N+1)!^{2a+2\eps}}{k^{N+1}}e^{\frac{k\phi(x)}{2}} \|u\|_{k\phi}.$$

For the second term, the integrand is bounded by some constant times 
\begin{equation*}
|u(y)|e^{\frac{k\phi(x)}{2}-\frac{k\phi(y)}{2}-k\delta|x-y|^2}\left|a^{(N)}-\nabla \left(A^{(N+1)}\right)\right|.
\end{equation*}
By Corollary \ref{prop 7.6}, we have
\begin{align*}
e^{-k\delta|x-y|^2}&\left|a^{(N)}-\nabla \left(A^{(N+1)}\right) \right|
\\
\leq& \frac{C^{N+1}(N+1)!^{2a+2\eps}}{k^{N+1}}+e^{-k\delta|x-y|^2}  \left(C k + \frac{C^{N}N!^{2a+2\eps}}{k^N}\right)\exp\left(-b|x-y|^{-\frac{1}{a-1}}\right).
\end{align*}
Note  for any positive integer $M$, 
$$\exp\left(-b|x-y|^{-\frac{1}{a-1}}\right)\leq \left(\frac{a-1}{b}\right)^{M(a-1)}M!^{a-1}|x-y|^{M}.$$
Take $M=2N+4$. Since for any $x,y\in B^n(0,1)$,
$$e^{-k\delta|x-y|^2} |x-y|^{2N+4}\leq \frac{(N+2)!}{(\delta k)^{N+2}},$$
which implies
\begin{align*}
ke^{-k\delta|x-y|^2}\exp\left(-b|x-y|^{-\frac{1}{a-1}}\right)
\leq&  \left(\frac{(a-1)^{2a-2}}{b^{2a-2}\delta}\right)^{N+2}(2N+4)!^{a-1}\frac{(N+2)!}{k^{N+1}}\\
\leq&  \left(\frac{4^{a-1+\eps}(a-1)^{2a-2}}{b^{2a-2}\delta}\right)^{N+2}\frac{(N+2)!^{2a-1}}{k^{N+1}}\\
\leq& \frac{C^{N+1}(N+1)!^{2a-1}}{k^{N+1}},
\end{align*}
for some constant $C$ independent to $N$.  So it is easy to see that by choosing a larger constant $C$, we have
$$e^{-k\delta|x-y|^2}\left|a^{(N)}-\nabla \left(A^{(N+1)}\right) \right|
\leq \frac{C^{N+1}(N+1)!^{2a+2\eps}}{k^{N+1}}.$$
Therefore, the second integral is also bounded by 
$\frac{C^{N+1}(N+1)!^{2a+2\eps}}{k^{N+1}}e^{\frac{k\phi(x)}{2}} \|u\|_{k\phi}$.

For the third and fourth terms, the integrands are respectively bounded by some constant times 
\begin{align*}
|u(y)|e^{\frac{k\phi(x)}{2}-\frac{k\phi(y)}{2}-k\delta|x-y|^2}\left|D_{\bar{\theta}}\cdot A^{(N+1)}\right|,\quad
|u(y)|e^{\frac{k\phi(x)}{2}-\frac{k\phi(y)}{2}-k\delta|x-y|^2}\left|D_{\bar{y}}\cdot A^{(N+1)}\right|.
\end{align*}

By Lemma \ref{error term 4 for Gevrey}, we have
\begin{align*}
e^{-k\delta|x-y|^2}\left|D_{\bar{\theta}}\cdot A^{(N+1)}\right|
\leq e^{-k\delta|x-y|^2}  \left(C k + \frac{C^{N+1}(N+1)!^{2a+2\eps}}{k^{N+1}}\right)\exp\left(-b|x-y|^{-\frac{1}{a-1}}\right),
\end{align*}
and
\begin{align*}
e^{-k\delta|x-y|^2}\left|D_{\bar{y}}\cdot A^{(N+1)}\right|
\leq e^{-k\delta|x-y|^2}  \left(C k + \frac{C^{N+1}(N+1)!^{2a+2\eps}}{k^{N+1}}\right)\exp\left(-b|x-y|^{-\frac{1}{a-1}}\right),
\end{align*}
By the same argument as estimating the second term, we have
\begin{align*}
ke^{-k\delta|x-y|^2}\exp\left(-b|x-y|^{-\frac{1}{a-1}}\right)
\leq \frac{C^{N+1}(N+1)!^{2a-1}}{k^{N+1}}.
\end{align*}
By choosing a larger constant $C$,
$$e^{-k\delta|x-y|^2}\left|D_{\bar{\theta}}\cdot A^{(N+1)}\right|
\leq \frac{C^{N+1}(N+1)!^{2a+2\eps}}{k^{N+1}}.$$
Therefore, the third and fourth integral are also bounded by 
$\frac{C^{N+1}(N+1)!^{2a+2\eps}}{k^{N+1}}e^{\frac{k\phi(x)}{2}} \|u\|_{k\phi}$ and the result follows.

\section{From local to global and the proof of Theorem \ref{Main}}\label{Sec Global}
Let $K_k(x,y)$ be the Bergman kernel of $(M,L^k)$. As we noted before, we also write $K_k(x, y)$ for the representation of the Bergman kernel in the local frame $e^k_L \otimes \oo{e^k_L}$ and we denote $K_{k,y}(x):= K_k(x,y)$. In the last section, we constructed the local Bergman kernel of order $N$, which we denoted by $K_{k}^{(N)}(x,y)=K_{k,y}^{(N)}(x)$. In this section, we show that $K_k(x,y)$ is equal to $K_{k}^{(N)}(x,y)$ up to order $k^{-N}$ when $x,y$ are sufficiently close to each other. Moreover, we will give a precise upper bound for the error term.
\begin{prop}\label{LocalGlobal} There exists $ \delta >0$ such that whenever $d(x,y) < \delta$, we have
\begin{equation} \label{Cx}
K_k(x,y)=K_k^{(N)}(x,y)+ k^{\frac{3n}{2}} \widetilde{\mathcal R}_{N+1}(\phi, k) e^{\frac{k\phi(x)}{2}+\frac{k\phi(y)}{2}},
\end{equation}
where
\begin{equation}\label{C}
| \widetilde{\mathcal R}_{N+1}(\phi, k)| \leq \frac{C^{N+1}(N+1)!^{2a+2\eps}}{k^{N+1}}, 
\end{equation}
and the constant $C$ is independent of $N$, $x$, $y$, and $k$. 
\end{prop}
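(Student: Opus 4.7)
The plan is to combine the local reproducing identity (Proposition \ref{LocalBergmanKernel}) with Hörmander's $L^2$-estimate to bridge the local and global Bergman kernels, carefully tracking the Gevrey factorial growth throughout.

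\emph{Step 1 (applying local reproducing to the true kernel).} Fix $y$ with $d(x,y)<\delta$, chosen so that after the identification in Section \ref{Sec Local}, $y\in B^n(0,1/4)\subset U$. Since $z\mapsto K_k(z,y)$ is a global holomorphic section, its restriction to $U$ lies in $H_{k\phi}(U)$, so Proposition \ref{LocalBergmanKernel} with $u=K_{k,y}$ gives
\begin{equation*}
K_k(x,y)=\bigl(\chi K_{k,y},K_{k,x}^{(N)}\bigr)_{k\phi}+k^n\mathcal R_{N+1}(\phi,k)\,e^{k\phi(x)/2}\|K_{k,y}\|_{k\phi,U}.
\end{equation*}
The Zelditch--Catlin expansion \eqref{ZC} together with positivity of the Bergman function yields $\|K_{k,y}\|_{k\phi,U}^2\le K_k(y,y)\le Ck^n e^{k\phi(y)}$, so this remainder already has exactly the promised form $k^{3n/2}\cdot\tfrac{C^{N+1}(N+1)!^{2a+2\eps}}{k^{N+1}}e^{k\phi(x)/2+k\phi(y)/2}$.

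\emph{Step 2 (identifying the pairing with $K_k^{(N)}(x,y)$ via Hörmander).} View $\chi K_{k,x}^{(N)}$, extended by zero, as a smooth compactly supported section of $L^k$ on $M$. Writing $\bar\partial(\chi K_{k,x}^{(N)})=(\bar\partial\chi)K_{k,x}^{(N)}+\chi\,\bar\partial K_{k,x}^{(N)}$, the first term is supported where $|y|\ge 1/2$ and is exponentially small in $\sqrt k$ by the off-diagonal Agmon estimate \eqref{Agmon}, while the second is small by almost-holomorphicity of $\overline{K_{k,x}^{(N)}(y)}=(k/\pi)^n e^{k\psi(x,\bar y)}B^{(N)}(x,\bar y)$ in $\bar y$—Lemmas \ref{lem almostholomorphic}, \ref{AHE2} applied to $\psi$ and Theorem \ref{MainLemma} applied to the $b_m$ yield Gevrey-exponential bounds. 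Hörmander's $L^2$-estimate on $(M,L^k)$ then produces the minimal-norm solution $w_x$ of $\bar\partial w_x=\bar\partial(\chi K_{k,x}^{(N)})$ with $w_x\perp H^0(M,L^k)$ and $\|w_x\|_{k\phi}\le Ck^{-1}\|\bar\partial(\chi K_{k,x}^{(N)})\|_{k\phi}$. The corrected section $s_x:=\chi K_{k,x}^{(N)}-w_x$ is globally holomorphic and, using $\chi(y)=1$ and orthogonality of $w_x$ to $H^0$, its reproducing pairing with $K_{k,y}$ gives
\begin{equation*}
K_{k,x}^{(N)}(y)-w_x(y)=\overline{\bigl(\chi K_{k,y},K_{k,x}^{(N)}\bigr)_{k\phi}}.
\end{equation*}
Combining with Step 1, $K_k(x,y)=\overline{K_{k,x}^{(N)}(y)}-\overline{w_x(y)}+E_1$. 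A sub-mean-value / Bochner--Kodaira argument at the level $L^k$ promotes the $L^2$-bound on $w_x$ to a pointwise bound at $y$ with the correct prefactor. Finally, $\overline{K_{k,x}^{(N)}(y)}$ and $K_k^{(N)}(x,y)=K_{k,y}^{(N)}(x)$ are two almost holomorphic extensions (both via the formula \eqref{KandB} with the recursion \eqref{Recursive1}) of the same real diagonal function $K_k^{(N)}(x,x)$, so by the Gevrey refinement of Lemma \ref{AHE Uniqueness} they differ by a term of size $e^{k\psi(x,\bar y)}\,\exp(-b|x-y|^{-1/(a-1)})$, and the intrinsic geometric damping $|e^{k\psi(x,\bar y)}|=e^{k(\phi(x)+\phi(y))/2-kD(x,y)/2}$ with $D(x,y)\ge cd(x,y)^2$ absorbs this into the target remainder.

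\emph{Main obstacle.} The principal technical difficulty is making the second step quantitative at the Gevrey level: one must transfer the global $L^2$-bound on $w_x$ into a pointwise estimate at $y$ with precisely the $k^{3n/2}e^{k(\phi(x)+\phi(y))/2}$ prefactor, and verify that both small factors—the Hörmander correction (of order $e^{-c k^{1/(2a+2\eps)}}$ after accounting for the Gevrey bounds on $\bar\partial(\chi K_{k,x}^{(N)})$) and the extension-uniqueness discrepancy $\exp(-b|x-y|^{-1/(a-1)}-ckd(x,y)^2)$—are both dominated by $C^{N+1}(N+1)!^{2a+2\eps}/k^{N+1}$ uniformly for $N\le N_0(k)\sim k^{1/(2a+2\eps)}$. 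A case split on whether $d(x,y)$ is smaller or larger than $k^{-(a-1)/(2a+2\eps)}$ handles both regimes, and this is where the choice of a fixed (but sufficiently small) $\delta$ enters essentially.
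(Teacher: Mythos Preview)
Your strategy—apply the local reproducing identity to a column of the global Bergman kernel, then correct $\chi K_k^{(N)}$ globally via H\"ormander and pass from $L^2$ to pointwise control—is exactly the paper's.  But you have inverted the roles of $x$ and $y$ relative to the paper, and this is what creates your ``main obstacle.''  The paper applies Proposition~\ref{LocalBergmanKernel} at the point $y$ with $u=K_{k,x}$ (not at $x$ with $u=K_{k,y}$), and runs H\"ormander on $\chi K_{k,y}^{(N)}$ (not on $\chi K_{k,x}^{(N)}$).  With that choice the two inner products that appear, $(\chi K_{k,x},K_{k,y}^{(N)})_{k\phi}$ and $(\chi K_{k,y}^{(N)},K_{k,x})_{k\phi}$, are complex conjugates of each other, so one obtains directly
\[
K_k(x,y)=K_{k,y}^{(N)}(x)-u_{k,y}(x)+\overline{\text{remainder}},
\]
and $K_{k,y}^{(N)}(x)=K_k^{(N)}(x,y)$ \emph{by definition}.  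No symmetry argument between $\overline{K_{k,x}^{(N)}(y)}$ and $K_{k,y}^{(N)}(x)$ is needed at all.  Your extension-uniqueness step is plausible (both sides are $G^{a,\eps}$-almost holomorphic extensions of the same real diagonal function), but making it fully quantitative with the required $C^{N+1}(N+1)!^{2a+2\eps}/k^{N+1}$ control is genuine extra work that the paper simply sidesteps.

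Two smaller points.  First, the term $(\bar\partial\chi)K_{k,x}^{(N)}$ is not controlled by the Agmon estimate~\eqref{Agmon}, which is for the \emph{global} kernel $K_k$; instead one uses the explicit formula $\overline{K_{k,x}^{(N)}(y)}=(k/\pi)^n e^{k\psi(x,\bar y)}B^{(N)}(x,\bar y)$ together with $\Real\psi(x,\bar y)\le \tfrac{\phi(x)+\phi(y)}{2}-\delta|x-y|^2$ and the bound $\|B^{(N)}\|_{L^\infty}\le Ck+C^N N!^{2a+2\eps}/k^N$ from Theorem~\ref{MainLemma}.  Second, the $L^2$-to-pointwise step in the paper is done via the Bochner--Martinelli formula on a ball of radius $k^{-1/2}$ (yielding the factor $k^{n/2}$), which is more concrete than the sub-mean-value argument you sketch but to the same effect.
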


\begin{proof}
We fix $x \in M$ and assume that $\phi$ is in Gevrey $a$ class in $B^n(x, 3)$. Let $\chi$ be a  smooth cut-off function such that
\begin{equation*}
    \chi(z)=
    \begin{cases}
    1 & z\in B^{n}(x,\frac{1}{2})\\
    0 & z\notin B^{n}(x,\frac{3}{4})
    \end{cases}.
\end{equation*}
We assume $y\in B^n(x,\frac{1}{4})$.
We first observe that
\begin{equation}\label{S}
K_k(y,x)=\left(\chi K_{k,x},K^{(N)}_{k,y}\right)_{k\phi}+ \mathcal S_{N+1} (\phi, k) k^{\frac{3n}{2}}e^{k(\frac{\phi(x)}{2}+\frac{\phi(y)}{2})},
\end{equation}
where $|\mathcal S_{N+1} (\phi, k)|\leq \frac{C^{N+1}(N+1)!^{2a+2\eps}}{k^{N+1}}$.
This is because, by Proposition \ref{LocalBergmanKernel}, we have
\begin{align*}
K_{k,x}(y)=\left(\chi K_{k,x}, K^{(N)}_{k,y}\right)_{k\phi}+k^n \mathcal S_{N+1} (\phi, k)   e^{\frac{k\phi(y)}{2}}\|K_{k,x}\|_{k\phi},
\end{align*}
and by the reproducing property of Bergman kernel, we have
\begin{equation*}
\|K_{k,x}\|_{k\phi} e^{-k\frac{\phi(x)}{2}} \leq  \left |  \|K_{k,x}\|_{L^2(M,L^k)} \right |_{h^k}= \sqrt{| K_k(x,x)|_{h^k}}\leq Ck^{\frac{n}{2}}.
\end{equation*}
That why $| K_k(x,x)|_{h^k} \leq Ck^n$ follows from the extreme property of the Bergman function and also the sub-mean value inequality. For a simple proof see for example Lemma 4.1 of \cite{HKSX}.

Next, we define\begin{equation} \label{u}
u_{k,y}(z)= \chi(z) K_{k,y}^{(N)}(z)-\left(\chi K^{(N)}_{k,y}, K_{k,z}\right)_{k\phi}.
\end{equation} Our goal is to estimate $|u_{k, y}(x)|$. 
Since $\left(\chi K^{(N)}_{k,y}, K_{k,x}\right)_{k\phi}$ is the Bergman projection of $\chi K_{k,y}^{(N)}$, $u_{k,y}$ is the minimal $L^2$ solution to the equation
\begin{equation*}
    \dbar u=\dbar(\chi K_{k,y}^{(N)}).
\end{equation*}
So by using H\"ormander's $L^2$ estimates \cite{Ho} (see  \cite{B} for an exposition), we have
\begin{equation*}
\left\|u_{k,y}\right\|_{L^2}^2\leq \frac{C}{k}\left\|\dbar \left(\chi K_{k,y}^{(N)}\right)\right\|_{L^2}^2.
\end{equation*}
We have $\dbar(\chi K_{k,y}^{(N)})(z)=\dbar\chi(z) K_{k,y}^{(N)}(z)+\chi(z) \dbar K_{k,y}^{(N)}(z)$. Recall that by \eqref{KandB}
$${K^{(N)}_{k,y}(z)}=\left(\frac{k}{\pi}\right)^n e^{k\psi(z,\bar{y})}B^{(N)}(z,\bar{y}). $$ 
For the first term, since $\dbar\chi(z)$ is supported in $d(z,x)\geq \frac{1}{2}$, if we choose $d(x,y)\leq \frac{1}{4}$, then using
$$\Real \psi(z,\bar{y})\leq -\delta|z-y|^2+\frac{\phi(x)}{2}+\frac{\phi(y)}{2},$$
we have
$$\left|\dbar\chi(z) K_{k,y}^{(N)}(z)\right|\leq Ck^n e^{-k\delta+k\frac{\phi(y)}{2}+k\frac{\phi(z)}{2}}\|B^{(N)}\|_{L^{\infty}(U\times U)}.$$
We can estimate $\|B^{(N)}\|_{L^\infty(U \times U)}$ using our Theorem \ref{MainLemma}
\begin{align*}
\|B^{(N)}\|_{L^\infty ( U \times U)} 
& \leq 1+ \frac{1}{k} \|b_1\|_{L^\infty ( U\times U)} + \dots \frac{1}{k^N} \|b_N\|_{L^\infty (U\times U)} \\ 
& \leq 1+\frac{C1!^{2a+2\eps}}{k}+\frac{C^2 2! ^{2a+2\eps}}{k^2}+\cdots+\frac{C^N N! ^{2a+2\eps}}{k^N} \\
& \leq C\left(k+\frac{C^NN!^{2a+2\eps}}{k^N}\right) .
\end{align*}
Hence,
\begin{align*}
\left|\dbar\chi(z) K_{k,y}^{(N)}(z)\right|
\leq& k^ne^{-k\delta+k\frac{\phi(y)}{2}+k\frac{\phi(z)}{2}}\left(Ck+\frac{C^NN!^{2a+2\eps}}{k^N}\right)
\\
\leq& k^ne^{k\frac{\phi(y)}{2}+k\frac{\phi(z)}{2}}\frac{C^{N+1}(N+1)!^{2a+2\eps}}{k^{N+1}}.
\end{align*}
For the second term, we have
$$\left|\chi(z) \dbar K_{k,y}^{(N)}(z)\right|\leq Ck^n e^{-k|z-y|^2+k\frac{\phi(y)}{2}+k\frac{\phi(z)}{2}}\left(k\left|\dbar\psi(z,\bar{y})\right|\left|B^{(N)}(z,\bar{y})\right|+\left|\dbar B^{(N)}(z,\bar{y})\right|\right).$$
By using the fact that $\psi(y,z)\in \mathcal{A}^{a,\eps}_{diag}$ and Theorem \ref{MainLemma},
\begin{align*}
k\left|\dbar\psi(z,\bar{y})\right|\left|B^{(N)}(z,\bar{y})\right|+\left|\dbar B^{(N)}(z,\bar{y})\right|
\leq Ck\left(k+\frac{C^NN!^{2a+2\eps}}{k^N}\right)\exp(-b|z-y|^{-\frac{1}{a-1}}) .
\end{align*}
Then by the fact that
\begin{align*}
e^{-k\delta|x-y|^2}\exp\left(-b|x-y|^{-\frac{1}{a-1}}\right)
\leq \frac{C^{N+1}(N+1)!^{2a-1}}{k^{N+3}}, 
\end{align*}
we have
\begin{align*}
\left|\chi(z) \dbar K_{k,y}^{(N)}(z)\right|\leq k^ne^{k\frac{\phi(y)}{2}+k\frac{\phi(z)}{2}}\frac{C^{N+1}(N+1)!^{2a+2\eps}}{k^{N+1}}.
\end{align*}
So 
\begin{align}\label{dbarK}
\left|\dbar(\chi K_{k,y}^{(N)})(z)\right|
\leq k^ne^{k\frac{\phi(y)}{2}+k\frac{\phi(z)}{2}}\frac{C^{N+1}(N+1)!^{2a+2\eps}}{k^{N+1}},
\end{align}
and
\begin{align}\label{uL2norm}
\|u_{k,y}\|^2\leq \frac{C}{k} \|\dbar \left(\chi K_{k,y}^{(N)}\right)\|_{L^2}^2
\leq
C k^{2n}e^{k\phi(y)}\left(\frac{C^{N+1}(N+1)!^{2a+2\eps}}{k^{N+1}}\right)^2.
\end{align}
By using Bochner-Martinelli formula in a small Euclidean ball $B^n(x,r)$, we have
\begin{equation*}
r^{2n-1}|u_{k,y}(x)|\leq C\int_{\partial B^n(x,r)} |u(z)|dS+C\int_{B^n(x,r)}|\dbar u(z)|\frac{r^{2n-1}}{|z-x|^{2n-1}}dV_0,
\end{equation*}
where $dS$ and $dV_0$ are respectively the standard volume forms of $\partial B^n(0,1)$ and $B^n(0,1)$ in Euclidean space. If we use the Bochner coordinates at $x$, then $\phi(z)-\phi(x)=O(|z|^2)$, and thus
$$e^{k\left(\frac{\phi(z)}{2}-\frac{\phi(x)}{2}\right)}\leq C,\quad\mbox{for any } z\in B^n(x,\frac{1}{\sqrt{k}}).$$ 
By integrating the above inequality with respect to $r$ from $0$ to $\frac{1}{\sqrt{k}}$, we obtain
\begin{align*}
|u_{k,y}(x)|
\leq& Ck^n\int_{B^n(x,\frac{1}{\sqrt{k}})} |u(z)|dV_0+Ck^n\int_0^{\frac{1}{\sqrt{k}}}\int_{B^n(x,r)}|\dbar u(z)|\frac{r^{2n-1}}{|z-x|^{2n-1}}dV_0dr
\\
\leq& Ck^n\int_{B^n(x,\frac{1}{\sqrt{k}})} |u(z)|dV_0+C\|e^{-\frac{k\phi(z)}{2}}\dbar u(z)\|_{L^\infty}e^{\frac{k\phi(x)}{2}}\int_{B^n(x,\frac{1}{\sqrt{k}})}\frac{1}{|z-x|^{2n-1}}e^{k\left(\frac{\phi(z)}{2}-\frac{\phi(x)}{2}\right)}dV_0
\\
\leq &Ck^\frac{n}{2}e^{\frac{k\phi(x)}{2}}\left(\|u\|_{L^2}+\|e^{-\frac{k\phi(z)}{2}}\dbar u(z)\|_{L^\infty}\right).
\end{align*}

Therefore, by the estimates \eqref{dbarK} and \eqref{uL2norm}, it follows that
\begin{equation*}
|u_{k,y}(x)|\leq C k^{\frac{3n}{2}}e^{k\frac{\phi(y)}{2}+k\frac{\phi(x)}{2}}\left(\frac{C^{N+1}(N+1)!^{2a+2\eps}}{k^{N+1}}\right).
\end{equation*}
Combining this estimate with \eqref{S} and recalling the definition of $u_{k, y}$ in \eqref{u}, we get the result. 

We point out that we have renewed the constant $C$ at each step, but the final constant is independent of $k$ and $N$. We also note that the constant $C$ may depend on the point $x$, however by a simple compactness argument one can see that each such $C$ can be bounded by a uniform constant independent of $x$.
\end{proof}
Now we are ready to prove Theorem \ref{Main} and its corollaries. 

\subsection{Proof of Theorem \ref{Main}}
By Proposition \ref{LocalGlobal}, we just need to show that with $N=N_0-1= [ (k/C)^{\frac{1}{2a+2\eps}}]-1$, we have\footnote{For convenience, we use $N_0$ for $N_0(k)= {[(k/C)^{\frac{1}{2a+2\eps}}]}$. }
$$  k^{\frac{3n}{2}} \mathcal R_{N_0}(\phi, k) e^{\frac{k\phi(x)}{2}+\frac{k\phi(y)}{2}} = e^{k\left(\frac{\phi(x)}{2}+\frac{\phi(y)}{2}\right)}e^{-\delta k^{\frac{1}{2a+2\eps}}}O(1). $$
However, by the same proposition we know that 
$$ | R_{N_0}(\phi, k)| \leq \frac{C^{N_0}N_0!^{2a+2\eps}}{k^{N_0}}.$$ 
Hence it is enough to show that
$$ \frac{C^{N_0}N_0!^{2a+2\eps}}{k^{N_0}} =  e^{-\delta k^{\frac{1}{2a+2\eps}}} O(1).$$ 
By Stirling's formula,
\begin{equation*}
\frac{C^{N_0}N_0!^{2a+2\eps}}{k^{N_0}}
\leq C' N_0^{a+\eps}\frac{C^{N_0}{N_0}^{(2a+2\eps)N_0}}{e^{(2a+2\eps)N_0}k^{N_0}}
\leq C' N_0^{a+\eps}e^{-(2a+2\eps)N_0}
\leq C''k^{\frac{1}{2}}e^{- (2a+2\eps) (\frac{k}{C})^{\frac{1}{2a+2\eps}}}.
\end{equation*}
Since 
$$k^{\frac{3n}{2}+\frac12} e^{- (2a+2\eps) (\frac{k}{C})^{\frac{1}{2a+2\eps}}} \leq C''' e^{- (a+\eps) (\frac{k}{C})^{\frac{1}{2a+2\eps}}},$$
$\delta =\frac{a+\eps}{C^{\frac{1}{2a+2\eps}}}$ would do the job. 
\subsection{Proof of Corollary \ref{complete asymptotics}}
\begin{proof}
	By Theorem \ref{Main}, uniformly for any $x,y\in U$, we have 
	\begin{equation*}
	K_k(x,y)=e^{k\psi(x,\bar y)}\frac{k^n}{\pi^n}\left (1+\sum _{j=1}^{ N_0(k)-1}\frac{b_j(x, \bar y)}{k^j}\right )+e^{k\left(\frac{\phi(x)}{2}+\frac{\phi(y)}{2}\right)}e^{-\delta k^{\frac{1}{2a+2\eps}}}O(1).
	\end{equation*} 
For any given positive integer $N$, we rewrite the above formula as follows.
	\begin{equation*}
	K_k(x,y)=e^{-k\psi(x,\bar{y})}\frac{k^n}{\pi^n}\left(1+\sum_{j=1}^{N-1}\frac{b_j(x,\bar{y})}{k^j}+\sum_{j=N}^{N_0(k)-1}\frac{b_j(x,\bar{y})}{k^j}+e^{\frac{k}{2}\left(\phi(x)+\phi(y)-2\psi(x,\bar{y})\right)}e^{-\delta k^{\frac{1}{2a+2\eps}}}O(1)\right).
	\end{equation*}
	Our first observation is that, if $d(x,y)\leq \sqrt{\delta}k^{-\frac{1}{2}+\frac{1}{4a+4\eps}}$, then
	\begin{align*}
	\left|e^{\frac{k}{2}\left(\phi(x)+\phi(y)-2\psi(x,\bar{y})\right)}e^{-\delta k^{\frac{1}{2a+2\eps}}}\right|=e^{\frac{k}{2}D(x,y)-\delta k^{\frac{1}{2a+2\eps}}}
	\leq e^{-\frac{1}{4}\delta k^{\frac{1}{2a+2\eps}}}.
	\end{align*}
	Now we estimate the term $\sum_{j=N}^{N_0-1}\frac{b_j(x,\bar{y})}{k^j}$. By Stirling's formula, we have
	\begin{align*}
	\left|\frac{b_j(x,\bar{y})}{k^j}\right|\leq \frac{C^jj!^{2a+2\eps}}{k^j}\leq C'j^{a+\eps}\frac{C^jj^{(2a+2\eps)j}}{e^{(2a+2\eps)j}k^j}.
	\end{align*}
	Since $\frac{C^jj^{(2a+2\eps)j}}{e^{(2a+2\eps)j}k^j}$ is monotonically decreasing for $1\leq j\leq N_0(k)-1$ (with the help of Stirling's formula once more), we get
	\begin{align*}
	\left|\sum_{j=N}^{N_0-1}\frac{b_j(x,\bar{y})}{k^j}\right|
	\leq& \frac{C^NN!^{2a+2\eps}}{k^N}+\sum_{j=N+1}^{N_0-1}C'j^{a+\eps}\frac{C^jj^{(2a+2\eps)j}}{e^{(2a+2\eps)j}k^j}
	\\
	\leq &\frac{C^NN!^{2a+2\eps}}{k^N}+C'N_0^{a+\eps+1}\frac{C^{N+1}(N+1)^{(2a+2\eps)(N+1)}}{e^{(2a+2\eps)(N+1)}k^{N+1}}
	\\
	\leq &\frac{C''^{N}N!^{2a+2\eps}}{k^N}.
	\end{align*}
	Therefore,
	\begin{align*}
	K_k(x,y)
	=&e^{-k\psi(x,\bar{y})}\frac{k^n}{\pi^n}\left(1+\sum_{j=1}^{N-1}\frac{b_j(x,\bar{y})}{k^j}+\frac{C''^NN!^{2a+2\eps}}{k^N}+e^{-\frac{1}{4}\delta k^{\frac{1}{2a+2\eps}}}O(1)\right).
	\end{align*}
	By the fact that
	\begin{align*}
	e^{-\frac{1}{4}\delta k^{\frac{1}{2a+2\eps}}}
	\leq \left(\frac{4}{\delta}\right)^{(2a+2\eps)N}\frac{\left((2a+2\eps)N\right)^{(2a+2\eps)N}}{e^{(2a+2\eps)N}k^N}
	\leq \left(\frac{8a+8\eps}{\delta}\right)^{(2a+2\eps)N}\frac{N!^{2a+2\eps}}{k^N},
	\end{align*}
    the first part of our result follows.
    
    Now we prove the second part. Let $\widetilde{b}_m(x,z)$ be another almost holomorphic extension of $b_m(x,\bar{x})$. By Lemma \ref{AHE Uniqueness}, we have 
    \begin{equation*}
    	|b_m(x,\bar{y})-\widetilde{b}_m(x,\bar{y})|=O\left(|x-y|^\infty\right)=O\left(\frac{1}{k^\infty}\right).
    \end{equation*}
    The second equality follows from our assumption that $d(x,y)\leq \delta k^{-\frac{1}{2}+\frac{1}{4a+4\eps}}$. So the result follows.
\end{proof}

\begin{rmk}
	Let $\widetilde{b}_m$ be the almost holomorphic extension defined in Definition \ref{AHE}. If we take $\tilde{b}_m$ in Corollary \ref{complete asymptotics}, then \eqref{complete formula} and \eqref{complete error} hold. The reason is as follows.
	
	For any $M\in \mathbb{N}$, there exists $C_M$ such that
	\begin{equation*}
		\left|b_m(x,\bar{y})-\widetilde{b}_m(x,\bar{y})\right|\leq C_M|x-y|^{M+1}.
	\end{equation*}
	And $C_M$ depends on the sup norm of the all the $(M+1)$th derivatives of $b_m(x,\bar{y})$ and $\widetilde{b}_m(x,\bar{y})$. By Theorem \ref{MainLemma} and Lemma \ref{AHE2}, we have
	\begin{equation*}
		C_M\leq C^{m+M}m!^{2a+2\eps}M!^{a+\eps-1},
	\end{equation*}
	where $C$ is some positive constant independent of $m$. If we take $M=[(N-m)\frac{4a+4\eps}{2a+2\eps-1}]$, then $|x-y|^{M+1}\leq \frac{1}{k^{N-m}}$, whence by Stirling's formula we obtain
	\begin{align*}
		\frac{\left|b_m(x,\bar{y})-\widetilde{b}_m(x,\bar{y})\right|}{k^m}
		\leq& C^{m+M}m!^{2a+2\eps}M^{M(a+\eps-1)}k^{-N}\\
		\leq& C'^{N}m!^{2a+2\eps} (N-m)^{(N-m)(2a+2\eps)}k^{-N}\\
		\leq& C''^{N}m!^{2a+2\eps} (N-m)!^{2a+2\eps}k^{-N},
	\end{align*}
	where $C'$ and $C''$ are some positive constants independent of $m$. We rename $C''$ by $C$ and \eqref{complete formula}, \eqref{complete error} follows by
	\begin{align*}
		\sum_{j=1}^{N-1}\frac{\left|b_j(x,\bar{y})-\widetilde{b}_j(x,\bar{y})\right|}{k^j}
		\leq \sum_{j=1}^{N-1} \frac{C^NN!^{2a+2\eps}}{k^N}\leq \frac{(2C)^NN!^{2a+2\eps}}{k^N}.
	\end{align*} 
\end{rmk}

\subsection{Proof of Corollary \ref{Log}}
By Theorem \ref{Main}, we have
\begin{equation*}
K_k(x,y)=e^{k\psi(x, \bar y)}\frac{k^n}{\pi^n}\left(1+\sum _{j=1}^{N_0-1}\frac{b_j(x,\bar y)}{k^j}\right)+e^{\frac{k\phi(x)}{2}+\frac{k\phi(y)}{2}}e^{-\delta k^{\frac{1}{2a+2\eps}}} O(1).
\end{equation*}
Recall that $D(x,y)=\phi(x)+\phi(y)-\psi(x,\bar y)-\psi(y, \bar x)$.
Then
\begin{equation*}
\log|K_k(x,y)|_{h^k}=-\frac{kD(x,y)}{2}+n\log k -n\log \pi +\log\left|1+\sum _{j=1}^{N_0-1}\frac{b_j(x, \bar y)}{k^j}+e^{\frac{Q(x,y)}{2}k-\delta k^{\frac{1}{2a+2\eps}}}O(1)\right|,
\end{equation*}
where $Q(x,y)=\phi(x)+\phi(y)-2\psi(x,\bar{y})$.
So it is sufficient to prove
$$
\log\left|1+\sum _{j=1}^{N_0-1}\frac{b_j(x,\bar{y})}{k^j}+e^{\frac{Q(x,y)}{2}k-\delta k^{\frac{1}{2a+2\eps}}}O(1)\right|
=\log\left(1+O \left ( \frac{1}{k} \right )\right). $$
To do this we note that by our assumption $D(x, y) \leq  \frac{1}{2} \delta k^{-1+\frac{1}{2a+2\eps}}$, hence
\begin{equation*}
\left|e^{\frac{Q(x,y)}{2}k-\delta k^{\frac{1}{2a+2\eps}}}\right|=
e^{\frac{D(x,y)}{2}k-\delta k^{\frac{1}{2a+2\eps}}}
\leq e^{-\frac{3\delta}{4}k^{\frac{1}{2a+2\eps}}}.
\end{equation*}
It remains to show that
\begin{equation*}
\left|\sum _{j=1}^{N_0-1}\frac{b_j(x,\bar{y})}{k^j}\right|=O \left (\frac{1}{k} \right ).
\end{equation*}
By the estimates on $b_j(x,\bar{y})$ from Theorem \ref{MainLemma} and Stirling's formula, we have
$$\frac{|b_j(x,\bar{y})|}{k^j}\leq \frac{C^jj!^{2a+2\eps}}{k^j}\leq C'j^{a+\eps}\frac{C^j j^{(2a+2\eps)j}}{e^{2j}k^j}.$$
As shown in Lemma \ref{error term 1 for Gevrey}, the function $f(x)=\log \frac{C^xx^{(2a+2\eps)x}}{e^{2x}k^x}$ is decreasing on the interval $(0,(\frac{k}{C})^{\frac{1}{2a+2\eps}}]$, thus for $j\in [2,N_0-1]$,
$$\frac{|b_j(x,\bar{y})|}{k^j}\leq C'(N_0-1)^{a+\eps}\frac{C^2 2^{4a+4\eps}}{e^{4}k^2}\leq C''C^2\frac{N_0^{a+\eps}}{k^2}.$$
Therefore, 
\begin{equation*}
\left|\sum _{j=1}^{N_0-1}\frac{b_j(x,\bar{y})}{k^j}\right|\leq \frac{C}{k}+C''C^2\frac{N_0^{a+\eps+1}}{k^2}\leq \frac{C}{k}+C''C\frac{1}{k}=O \left (\frac{1}{k} \right ).
\end{equation*}

\section{Estimates on Bergman Kernel Coefficients}\label{Sec ProofofMainLemma}
As before, we assume the \k metric is in Gevrey class $G^a(U)$ for some neighborhood $U$ of $p$. We will estimate the growth rate of the Bergman kernel coefficients $b_m(x,z)$ as $m\rightarrow \infty$ for $x,z$ in $U$. Our goal is to prove Theorem \ref{MainLemma}. 

The key ingredient for the proof is the following recursive formula\footnote{We discussed its proof in \eqref{Recursive1}. } on $b_m(x,z)$ established in  \cite{BBS}.
\begin{equation}\label{Recursive}
	b_m(x,z(x,x,\theta))=-\sum_{l=1}^m\frac{(D_y\cdot D_\theta)^l}{l!}\big(b_{m-l}\left(x,z(x,y,\theta)\right)\Delta_0(x,y,\theta)\big)\Big|_{y=x}.
\end{equation}
We will break the proof of Theorem \ref{MainLemma} into two steps. The first step is to derive from the recursive formula \eqref{Recursive}, a recursive inequality on $\|D^\mu_zD^\nu_{\bar{z}}b_m(x,z)\|_{L^\infty(U\times U)}$ for some neighborhood $U$. The second step is to estimate $\|D^\mu_zD^\nu_{\bar{z}}b_m(x,z)\|_{L^\infty(U\times U)}$ by induction.

In the following we shall use the following standard notations for multi-indicies. 

\begin{itemize}
\item $ \mathbbm 1 = (1, 1, \cdots, 1)$.
\item $\binom{\alpha}{\beta}=\binom{\alpha_1}{\beta_1}\binom{\alpha_2}{\beta_2}\cdots \binom{\alpha_n}{\beta_n}$.
\item $\binom{l}{\delta_1,\delta_2,\cdots,\delta_n}=\frac{l!}{\delta_1!\delta_2!\cdots\delta_n!}$ for any non-negative integer $l$ and multi-index $\delta\geq 0$ such that $|\delta|=l$.
\end{itemize}

\begin{lemma}\label{bmLemmaGevrey}
	Suppose the \k potential $\phi\in G^a(U)$. Let $W=\{(x,z)\in U\times U: x\neq z\}$ and $$b=\min\{b(\Delta_0(x,y,\theta)),b(\psi_{x_i}(x,z)),b(z_i(x,y,\theta)): 1\leq i\leq n\}.$$ 
	If we denote $v=(x,z)$ and 
	\begin{equation*}
	b_{m,\mu\bar{\nu}}=\left\|\frac{1}{\lambda_{b,|\nu|}(x,x,z)}D_v^\mu D_{\oo{v}}^{\nu} b_m(x,z)\right\|_{L^\infty(W)},
	\end{equation*}
	then the exists some positive constant $C$ independent of $m,\mu,\nu$, such that
	\begin{align}\label{bmGevrey}
	\begin{split}
	b_{m,\mu\bar{\nu}}
	& \leq\sum_{l=1}^m
	\sum_{|\delta|=l}\delta!^{2a+2\eps-1}
	\sum_{\alpha,\beta\leq \delta}
	\sum_{|\xi+\eta|\leq |\alpha+\beta|}
	\sum_{\mu_0\leq \mu}\sum_{\nu_0\leq \nu} \frac{ b_{m-l,\widetilde{\xi}+\mu_0\oo{\widetilde{\eta}+\nu_0}}}{(\xi!\eta!)^{a+\eps}}
	C^{|\mu-\mu_0+\nu-\nu_0|+|\delta+\xi+\eta|}
	\\&
	\quad\quad \cdot 
	\binom{\mu}{\mu_0}\binom{\nu}{\nu_0}
	(\mu-\mu_0)!^{a+\eps}(\nu-\nu_0)!^{a+\eps},
	\end{split}
	\end{align}
	where $\xi,\eta\in (\mathbb{Z}^{\geq 0})^n$ and $\widetilde{\xi}=(0,\cdots 0,\xi), \widetilde{\eta}=(0, \cdots, 0,\eta)\in (\mathbb{Z}^{\geq 0})^{2n}$.
\end{lemma}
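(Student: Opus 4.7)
My starting point will be the recursive formula \eqref{Recursive}, which I will view as an identity of smooth functions of $(x,z)\in U\times U$ upon the substitution $\theta=\theta(x,x,z)$. The plan is to apply $D_v^\mu D_{\bar v}^\nu$ (with $v=(x,z)$) to both sides and reduce the right-hand side to a sum of lower-order quantities $b_{m-l,\,\widetilde{\xi}+\mu_0\oo{\widetilde{\eta}+\nu_0}}$ with explicit combinatorial weights, by systematically invoking the Gevrey bounds on $\Delta_0$, $\psi$, $\theta(x,y,z)$, and $z(x,y,\theta)$ from Section~\ref{AHEoGF}.

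The inner differential operator $\frac{(D_y\cdot D_\theta)^l}{l!}$ I will first expand as $\sum_{|\delta|=l}\frac{1}{\delta!}D_y^\delta D_\theta^\delta$; this accounts for the summation $\sum_{|\delta|=l}$ and, once the two Gevrey gains $\delta!^{a+\eps}$ coming from differentiating the auxiliary functions in $y$ and in $\theta$ are combined and a single $\delta!$ cancels, for the prefactor $\delta!^{2a+2\eps-1}$. Next, Leibniz' rule on $D_y^\delta D_\theta^\delta\bigl[b_{m-l}(x,z(x,y,\theta))\Delta_0(x,y,\theta)\bigr]$ produces the binomial sums $\sum_{\alpha,\beta\le\delta}\binom{\delta}{\alpha}\binom{\delta}{\beta}$; these binomials will be absorbed into the Gevrey factorial estimates for the $\Delta_0$-derivatives via Lemmas~\ref{AHE CLOSED}--\ref{AHE Integral}. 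The multivariate chain rule (Fa\`a di Bruno) then expands $D_y^\alpha D_\theta^\beta b_{m-l}(x,z(x,y,\theta))$ into a sum over $(\xi,\eta)$ with $|\xi+\eta|\le|\alpha+\beta|$ of terms $(D_z^\xi D_{\bar z}^\eta b_{m-l})(x,z(x,y,\theta))$ multiplied by products of derivatives of $z(x,y,\theta)$ and $\oo{z(x,y,\theta)}$. Using Lemma~\ref{AHE Inverse} (which gives $z(x,y,\theta)\in\mathcal{A}^{a,\eps}_\theta$) and closure under products (Lemma~\ref{AHE CLOSED}), these chain-rule coefficients can be bounded by $C^{|\delta+\xi+\eta|}(\alpha+\beta)!^{a+\eps}/(\xi!\eta!)^{a+\eps}$, which explains the $(\xi!\eta!)^{-(a+\eps)}$ in the denominator of \eqref{bmGevrey}.

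The outermost $D_v^\mu D_{\bar v}^\nu$ I will apply by a final Leibniz splitting: the pieces $(\mu_0,\nu_0)\le(\mu,\nu)$ that hit the $b_{m-l}$ factor enlarge its holomorphic derivative index from $\widetilde\xi$ to $\widetilde\xi+\mu_0$ and its antiholomorphic index from $\widetilde\eta$ to $\widetilde\eta+\nu_0$ (the tildes embed the $n$-tuples $\xi,\eta$ into $2n$-tuples acting only on the $z$-slot of $v=(x,z)$), while the complementary pieces $(\mu-\mu_0,\nu-\nu_0)$ fall on the smooth factors $\Delta_0$ and $\theta(x,x,z)$. Bounding those by Lemma~\ref{AHE2} yields the weights $\binom{\mu}{\mu_0}\binom{\nu}{\nu_0}(\mu-\mu_0)!^{a+\eps}(\nu-\nu_0)!^{a+\eps}C^{|\mu-\mu_0+\nu-\nu_0|}$; aggregating all constants from the auxiliary functions into a single $C$ gives the combined exponent $C^{|\mu-\mu_0+\nu-\nu_0|+|\delta+\xi+\eta|}$.

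The normalizing exponential $\lambda_{b,|\nu|}(x,x,z)$ then comes out naturally: every antiholomorphic derivative on an auxiliary function in $\mathcal{A}^{a,\eps}_z$ or $\mathcal{A}^{a,\eps}_\theta$ carries a decay factor of the form $\exp(-b'|y-\bar z|^{-1/(a-1)})$, which on the diagonal $y=x$ becomes $\exp(-b'|x-\bar z|^{-1/(a-1)})$. Choosing $b$ as in the hypothesis — the minimum of $b(\Delta_0)$, $b(\psi_{x_i})$, $b(z_i(x,y,\theta))$ — lets me extract a single $\lambda_{b,|\nu|}(x,x,z)$ from both sides and cancel, giving the stated bound on $b_{m,\mu\bar\nu}$. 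The hard part, I expect, is the combinatorial bookkeeping in the Fa\`a di Bruno step: one must verify the sharp constraint $|\xi+\eta|\le|\alpha+\beta|$, check that the resulting chain-rule coefficients are uniformly bounded in $m$ by $C^{|\delta+\xi+\eta|}$ with no $m$-dependent blow-up, and confirm that the exponential decay factors arising separately from the inner composition $z(x,y,\theta)$ and from the outer composition $\theta(x,x,z)$ combine, on $y=x$, into the single factor $\lambda_{b,|\nu|}(x,x,z)$.
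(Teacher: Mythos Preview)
Your proposal is correct and follows essentially the same route as the paper: expand $(D_y\cdot D_\theta)^l$ multinomially, apply Leibniz in $\alpha,\beta\le\delta$, then Fa\`a di Bruno through $z(x,y,\theta)$ to produce the sum over $(\xi,\eta)$, restrict to $y=x$ via $\theta=\psi_x(x,z)$, and finally apply $D_v^\mu D_{\bar v}^\nu$ with a Leibniz split into $(\mu_0,\nu_0)$ and a second chain rule through $\Phi(x,z)=(x,x,\psi_x(x,z))$. The paper makes this second chain rule more explicit than your sketch and controls the Fa\`a di Bruno products via the combinatorial Lemma~\ref{Combinatoric 2} rather than by appealing to Lemma~\ref{AHE CLOSED} as a black box, but the structure and the tracking of the exponential factor $\lambda_{b,|\nu|}$ are the same.
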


\begin{proof}
	We first work on $(D_y\cdot D_\theta)^l\Big(b_{m-l}\left(x,z\left(x,y,\theta\right)\right)\Delta_0\left(x,y,\theta\right)\Big)$. We expand $(D_y\cdot D_\theta)^l$ and obtain
	\begin{align}\label{DDl}
	\begin{split}
	(D_y&\cdot D_\theta)^l\big(b_{m-l}\left(x,z\left(x,y,\theta\right)\right)\Delta_0\left(x,y,\theta\right)\big)\\
	=&\sum_{|\delta|=l}\binom{l}{\delta_1,\delta_2,\cdots,\delta_n}
	D_y^\delta D_\theta^\delta
	\big(b_{m-l}(x,z(x,y,\theta))\Delta_0(x,y,\theta)\big)\\
	=&\sum_{|\delta|=l}\binom{l}{\delta_1,\delta_2,\cdots,\delta_n}
	\sum_{\alpha,\beta\leq \delta}\binom{\delta}{\alpha}\binom{\delta}{\beta}D_y^\alpha D_\theta^\beta
	\big(b_{m-l}(x,z(x,y,\theta))\big)D_y^{\delta-\alpha}D_\theta^{\delta-\beta}\Delta_0
	\\
	=& b_{m-l}(x,z(x,y,\theta))(D_y\cdot D_\theta)^l\Delta_0(x,y,\theta)
	\\&+
	\sum_{|\delta|=l}l!\delta!
	\sum_{\substack{\alpha,\beta\leq \delta\\ \alpha+\beta>0}}
	\sum_{1\leq|\xi+\eta|\leq |\alpha+\beta|} \frac{D_z^\xi D_{\bar{z}}^\eta b_{m-l}(x,z)}{\xi!\eta!}
	\frac{D_y^{\delta-\alpha}D_\theta^{\delta-\beta}\Delta_0}{(\delta-\alpha)!(\delta-\beta)!} 
	\sum_{A_{\alpha\beta\xi\eta}}
	\prod_{ij}\frac{D_y^{\alpha_{ij}}D_{\theta}^{\beta_{ij}}z_i}{\alpha_{ij}!\beta_{ij}!}
	\prod_{ik}\frac{D_y^{\alpha'_{ik}}D_{\theta}^{\beta'_{ik}}\oo{z_i}}{\alpha'_{ik}!\beta'_{ik}!},
	\end{split}
	\end{align}
	where the index set $A_{\alpha\beta\xi\eta}$ is defined by
	\begin{align}\label{Index Set}
	\begin{split}
	A_{\alpha \beta\xi\eta}& 
	\\
	=&\left \{ \{ \alpha_{ij},\beta_{ij} \}_{1\leq i \leq n, 1 \leq j \leq\xi_i},  \{ \alpha'_{ik}, \beta'_{ik}\}_{1\leq i \leq n, 1 \leq k \leq\eta_i}: \quad  \begin{array}{ll} \sum_{1\leq i \leq n, 1 \leq j \leq \xi_i} \alpha_{ij}+\sum_{1\leq i \leq n, 1 \leq k \leq \eta_i} \alpha'_{ik}=\alpha, \\ \sum_{1\leq i \leq n, 1 \leq j \leq \xi_i} \beta_{ij}+\sum_{1\leq i \leq n, 1 \leq k \leq \eta_i} \beta'_{ik}=\beta,  \\ \alpha_{ij}+\beta_{ij}>0,\quad \alpha'_{ik}+\beta'_{ik}>0  \end{array}  \right \}.
	\end{split}
	\end{align}
	
	We now substitute \eqref{DDl} into equation \eqref{Recursive} and obtain
	\begin{align*}
	b_m&(x,z(x,x,\theta))\\
	&=-\sum_{l=1}^m
	\Bigg(\frac{1}{l!}b_{m-l}(x,z(x,x,\theta))(D_y\cdot D_\theta)^l\Delta_0(x,x,\theta)+\sum_{|\delta|=l}\delta!
	\sum_{\substack{\alpha,\beta\leq \delta\\ \alpha+\beta>0}}\sum_{1\leq|\xi+\eta|\leq |\alpha+\beta|} \frac{D_z^\xi D_{\bar{z}}^\eta b_{m-l}(x,z(x,x,\theta))}{\xi!\eta!}
	\\&	
	\qquad \cdot\frac{D_y^{\delta-\alpha}D_\theta^{\delta-\beta}\Delta_0}{(\delta-\alpha)!(\delta-\beta)!} (x,x,\theta)
	\sum_{A_{\alpha\beta\xi\eta}}
	\prod_{ij}\frac{D_y^{\alpha_{ij}}D_{\theta}^{\beta_{ij}}z_i}{\alpha_{ij}!\beta_{ij}!}(x,x,\theta)
	\prod_{ik}\frac{D_y^{\alpha'_{ik}}D_{\theta}^{\beta'_{ik}}\oo{z_i}}{\alpha'_{ik}!\beta'_{ik}!}(x,x,\theta)\Bigg).
	\end{align*}
	
	The correspondence $(x,x,z)\leftrightarrow (x,x,\theta=\psi_x(x,z))$, turns this into
	\begin{align*}
	b_m&(x,z)\\
	&=-\sum_{l=1}^m
	\Bigg(\frac{1}{l!}b_{m-l}(x,z)(D_y\cdot D_\theta)^l\Delta_0(x,x,\psi_x(x,z))+\sum_{|\delta|=l}\delta!
	\sum_{\substack{\alpha,\beta\leq \delta\\ \alpha+\beta>0}}\sum_{1\leq|\xi+\eta|\leq |\alpha+\beta|} \frac{D_z^\xi D_{\bar{z}}^\eta b_{m-l}(x,z)}{\xi!\eta!}
	\\&	
	\qquad \cdot\frac{D_y^{\delta-\alpha}D_\theta^{\delta-\beta}\Delta_0}{(\delta-\alpha)!(\delta-\beta)!} (x,x,\psi_x(x,z))
	\sum_{A_{\alpha\beta\xi\eta}}
	\prod_{ij}\frac{D_y^{\alpha_{ij}}D_{\theta}^{\beta_{ij}}z_i}{\alpha_{ij}!\beta_{ij}!}(x,x,\psi_x(x,z))
	\prod_{ik}\frac{D_y^{\alpha'_{ik}}D_{\theta}^{\beta'_{ik}}\oo{z_i}}{\alpha'_{ik}!\beta'_{ik}!}(x,x,\psi_x(x,z))\Bigg).
	\end{align*}
	
	Denote $v=(x,z)$. Note that in this recursive formula, the coefficients $b_m$ depend on not only the previous coefficients $b_{m-l}$ but also derivatives of $b_{m-l}$. 
	Hence, we need to include derivatives of $b_m$ in our inductive argument. To do this we apply $D_v^{\mu}D_{\oo{v}}^{\nu}$ on both sides and obtain a recursive formula for the derivatives of $b_m$.
	\begin{align}\label{bmrecursive}
	\begin{split}
	D_v^{\mu}&D_{\oo{v}}^{\nu} b_m(x,z)
	\\
	=&-\sum_{l=1}^m\sum_{\mu_0\leq \mu}\sum_{\nu_0\leq \nu}\binom{\mu}{\mu_0}\binom{\nu}{\nu_0}
	\Bigg\{\frac{1}{l!}D_v^{\mu_0}
	D_{\oo{v}}^{\nu_0}b_{m-l}(x,z)D_v^{\mu-\mu_0} D_{\oo{v}}^{\nu-\nu_0}\left((D_y\cdot D_\theta)^l\Delta_0(x,x,\psi_x(x,z))\right)
	\\
	&+\sum_{|\delta|=l}\delta!
	\sum_{\substack{\alpha,\beta\leq \delta\\ \alpha+\beta>0}}\sum_{1\leq|\xi+\eta|\leq |\alpha+\beta|} \frac{D_v^{\mu_0+\widetilde{\xi}} D_{\oo{v}}^{\nu_0+\widetilde{\eta}} b_{m-l}(x,z)}{\xi!\eta!} 
	\\&	 \quad \cdot D_v^{\mu-\mu_0}D_{\oo{v}}^{\nu-\nu_0} \left(\frac{D_y^{\delta-\alpha}D_\theta^{\delta-\beta}\Delta_0}{(\delta-\alpha)!(\delta-\beta)!} (x,x,\psi_x(x,z))
	\sum_{A_{\alpha\beta\xi\eta}}
	\prod_{ij}\frac{D_y^{\alpha_{ij}}D_{\theta}^{\beta_{ij}}z_i}{\alpha_{ij}!\beta_{ij}!}
	\prod_{ik}\frac{D_y^{\alpha'_{ik}}D_{\theta}^{\beta'_{ik}}\oo{z_i}}{\alpha'_{ik}!\beta'_{ik}!}(x,x,\psi_x(x,z))\right)\Bigg\}.
	\end{split}
	\end{align}
	
	Now we will estimate the factor $$D_v^{\mu-\mu_0} D_{\oo{v}}^{\nu-\nu_0}\left(\frac{D_y^{\delta-\alpha}D_\theta^{\delta-\beta}\Delta_0}{(\delta-\alpha)!(\delta-\beta)!} (x,x,\psi_x(x,z))
	\sum_{A_{\alpha\beta\xi\eta}}
	\prod_{ij}\frac{D_y^{\alpha_{ij}}D_{\theta}^{\beta_{ij}}z_i}{\alpha_{ij}!\beta_{ij}!}
	\prod_{ik}\frac{D_y^{\alpha'_{ik}}D_{\theta}^{\beta'_{ik}}\oo{z_i}}{\alpha'_{ik}!\beta'_{ik}!}(x,x,\psi_x(x,z))\right).$$
	
	Denote $\Phi(x,z)=(\varphi_1,\varphi_2,\cdots,\varphi_{3n})=(x,x,\psi_x(x,z))$, and $w=(x,y,\theta)$. In general, for any smooth function $f(x,y,\theta)$ and any multi-indices $\mu,\nu\in (\mathbb{Z}^{\geq 0})^{2n}$,  we have
	\begin{align*}
	\frac{D_v^{\mu}D_{\oo{v}}^{\nu}}{\mu!\nu!}\left(f\left(\Phi(x,z)\right)\right)
	=\sum_{0\leq |\rho+\tau|\leq |\mu+\nu|}\frac{D_w^\rho D_{\oo{w}}^\tau f}{\rho!\tau!}\sum_{A_{\mu\nu\rho\tau}}
	\prod_{ij}\frac{D_v^{\mu_{ij}}D_{\oo{v}}^{\nu_{ij}}\varphi_i}{\mu_{ij}!\nu_{ij}!}
	\prod_{ik}\frac{D_v^{\mu'_{ik}}D_{\oo{v}}^{\nu'_{ik}}\oo{\varphi_i}}{\mu'_{ik}!\nu'_{ik}!},
	\end{align*}
	where the index set $A_{\mu\nu\rho\tau}$ is defined similar as in \eqref{Index Set} with a minor change that $1\leq i\leq 3n$.
	Applying this to our case,  we obtain
	\begin{align*}
	\frac{D_v^{\mu}D_{\oo{v}}^{\nu}}{\mu!\nu!}&\left(\frac{D_y^{\delta-\alpha}D_\theta^{\delta-\beta}\Delta_0}{(\delta-\alpha)!(\delta-\beta)!} (x,x,\psi_x(x,z))
	\sum_{A_{\alpha\beta\xi\eta}}
	\prod_{ij}\frac{D_y^{\alpha_{ij}}D_{\theta}^{\beta_{ij}}z_i}{\alpha_{ij}!\beta_{ij}!}
	\prod_{ik}\frac{D_y^{\alpha'_{ik}}D_{\theta}^{\beta'_{ik}}\oo{z_i}}{\alpha'_{ik}!\beta'_{ik}!}(x,x,\psi_x(x,z))\right)
	\\
	&
	=\sum_{0\leq |\rho+\tau|\leq |\mu+\nu|}\frac{D_w^\rho D_{\oo{w}}^\tau}{\rho!\tau!}
	\left(\frac{D_y^{\delta-\alpha}D_\theta^{\delta-\beta}\Delta_0}{(\delta-\alpha)!(\delta-\beta)!} 
	\sum_{A_{\alpha\beta\xi\eta}}
	\prod_{ij}\frac{D_y^{\alpha_{ij}}D_{\theta}^{\beta_{ij}}z_i}{\alpha_{ij}!\beta_{ij}!}
	\prod_{ik}\frac{D_y^{\alpha'_{ik}}D_{\theta}^{\beta'_{ik}}\oo{z_i}}{\alpha'_{ik}!\beta'_{ik}!}\right)
	\\
	&\qquad\cdot \sum_{A_{\mu\nu\rho\tau}}
	\prod_{ij}\frac{D_v^{\mu_{ij}}D_{\oo{v}}^{\nu_{ij}}\varphi_i}{\mu_{ij}!\nu_{ij}!}
	\prod_{ik}\frac{D_v^{\mu'_{ik}}D_{\oo{v}}^{\nu'_{ik}}\oo{\varphi_i}}{\mu'_{ik}!\nu'_{ik}!}
	\end{align*}
	
	We will use $C$ to denote a constant depending on constants $\eps,a,n$ and \k potential $\phi$ but independent with all the indices $m,\mu,\nu$, which may vary from line to line.
	Since $\Phi(x,z)\in \mathcal{A}^{a,\eps}_{diag}$ for each $1\leq i\leq n$, we have
	\begin{align}\label{diagonal 1.1}
	\left|\sum_{A_{\mu\nu\rho\tau}}
	\prod_{ij}\frac{D_v^{\mu_{ij}}D_{\oo{v}}^{\nu_{ij}}\varphi_i}{\mu_{ij}!\nu_{ij}!}
	\prod_{ik}\frac{D_v^{\mu'_{ik}}D_{\bar{v}}^{\nu'_{ik}}\oo{\varphi_i}}{\mu'_{ik}!\nu'_{ik}!}\right|
	\leq \sum_{A_{\mu\nu\rho\tau}} C^{|\mu+\nu|+|\rho+\tau|}\prod_{ij}\left(\mu_{ij}+\nu_{ij}\right)!^{a+\eps-1}
	\prod_{ik}\left(\mu'_{ik}+\nu'_{ik}\right)!^{a+\eps-1}.
	\end{align}
	Apply the combinatorial lemma \ref{Combinatoric 2} that we will prove later to the two products appearing above, we have
	\begin{align*}
	\prod_{ij}\left(\mu_{ij}+\nu_{ij}\right)!\leq \frac{(2n)^{|\rho|}}{\rho!}\left(\sum_{ij}(\mu_{ij}+\nu_{ij})\right)!,
	\quad
	\prod_{ik}\left(\mu'_{ik}+\nu'_{ik}\right)!\leq \frac{(2n)^{|\tau|}}{\tau!}\left(\sum_{ik}(\mu'_{ik}+\nu'_{ik})\right)!.
	\end{align*}
	Therefore,
	\begin{align*}
	\left|\sum_{A_{\mu\nu\rho\tau}}
	\prod_{ij}\frac{D_v^{\mu_{ij}}D_{\oo{v}}^{\nu_{ij}}\varphi_i}{\mu_{ij}!\nu_{ij}!}
	\prod_{ik}\frac{D_v^{\mu'_{ik}}D_{\oo{v}}^{\nu'_{ik}}\oo{\varphi_i}}{\mu'_{ik}!\nu'_{ik}!}\right|
	&\leq \sum_{A_{\mu\nu\rho\tau}} C^{|\mu+\nu|+|\rho+\tau|}\left(\frac{(\mu+\nu)!}{\rho!\tau!}\right)^{a+\eps-1}
	\\
	&\leq C^{|\mu+\nu|+|\rho+\tau|}\left(\frac{\mu!\nu!}{\rho!\tau!}\right)^{a+\eps-1}.
	\end{align*}
	The last inequality follows from
	\begin{align*}
	\#A_{\mu\nu\rho\tau}\leq \binom{\mu+|\rho+\tau|\mathbbm{1}}{|\rho+\tau|\mathbbm{1}} \binom{\nu+|\rho+\tau|\mathbbm{1}}{|\rho+\tau|\mathbbm{1}}
	\leq 2^{|\mu+\nu|+4n|\rho+\tau|}.
	\end{align*}

	As $z(x,y,\theta), \Delta_0(x,y,\theta)\in \mathcal{A}^{a,\eps}_{\theta}$ by Lemma \ref{AHE Composition} and Lemma \ref{AHE Inverse}, after a straightforward calculation, we have
	\begin{align}\label{diagonal 1.2}
	\begin{split}
	&\left|\frac{D_w^\rho D_{\oo{w}}^\tau}{\rho!\tau!}
	\left(\frac{D_y^{\delta-\alpha}D_\theta^{\delta-\beta}\Delta_0}{(\delta-\alpha)!(\delta-\beta)!} 
	\sum_{A_{\alpha\beta\xi\eta}}
	\prod_{ij}\frac{D_y^{\alpha_{ij}}D_{\theta}^{\beta_{ij}}z_i}{\alpha_{ij}!\beta_{ij}!}
	\prod_{ik}\frac{D_y^{\alpha'_{ik}}D_{\theta}^{\beta'_{ik}}\oo{z_i}}{\alpha'_{ik}!\beta'_{ik}!}\right)\right|
	\\
	&\qquad\leq C^{|\rho+\tau|+|\delta+\xi+\eta|+1}\left(\frac{\delta!^2\rho!\tau!}{\xi!\eta!}\right)^{a+\eps-1}
	\binom{\alpha+|\xi+\eta|\mathbbm{1}}{|\xi+\eta|\mathbbm{1}}
	\binom{\beta+|\xi+\eta|\mathbbm{1}}{|\xi+\eta|\mathbbm{1}}.
	\end{split}
	\end{align}
	Therefore, for any $\mu,\nu\geq 0$, we obtain
	\begin{align*}
	\begin{split}
	&\left|\frac{D_v^{\mu}D_{\oo{v}}^{\nu}}{\mu!\nu!}\left(\frac{D_y^{\delta-\alpha}D_\theta^{\delta-\beta}\Delta_0}{(\delta-\alpha)!(\delta-\beta)!} (x,x,\psi_x(x,z))
	\sum_{A_{\alpha\beta\xi\eta}}
	\prod_{ij}\frac{D_y^{\alpha_{ij}}D_{\theta}^{\beta_{ij}}z_i}{\alpha_{ij}!\beta_{ij}!}
	\prod_{ik}\frac{D_y^{\alpha'_{ik}}D_{\theta}^{\beta'_{ik}}\oo{z_i}}{\alpha'_{ik}!\beta'_{ik}!}(x,x,\psi_x(x,z))\right)\right|
	\\
	&
	\qquad\leq \sum_{0\leq |\rho+\tau|\leq |\mu+\nu|}C^{|\mu+\nu|+|\delta+\xi+\eta|+1}\left(\frac{\delta!^2\mu!\nu!}{\xi!\eta!}\right)^{a+\eps-1}
	\binom{\alpha+|\xi+\eta|\mathbbm{1}}{|\xi+\eta|\mathbbm{1}}
	\binom{\beta+|\xi+\eta|\mathbbm{1}}{|\xi+\eta|\mathbbm{1}}
	\\
	&
	\qquad
	\leq C^{|\mu+\nu|+|\delta+\xi+\eta|+1}\left(\frac{\delta!^2\mu!\nu!}{\xi!\eta!}\right)^{a+\eps-1}.
	\end{split}
	\end{align*}
	The last inequality follows from the fact that $\alpha,\beta\leq \delta$, 
	\begin{align*}
	\binom{\alpha+|\xi+\eta|\mathbbm{1}}{|\xi+\eta|\mathbbm{1}},\quad
	\binom{\beta+|\xi+\eta|\mathbbm{1}}{|\xi+\eta|\mathbbm{1}}
	\leq 2^{|\delta|+|\xi+\eta|n}.
	\end{align*} 
	Similarly, for any $\mu,\nu\geq 0$, we also have
	\begin{align*}
	\left| \frac{1}{l!}\frac{D_v^{\mu} D_{\oo{v}}^{\nu}}{\mu!\nu!}\left((D_y\cdot D_\theta)^l\Delta_0(x,x,\psi_x(x,z))\right)\right|
	\leq \sum_{|\delta|=l}\delta! C^{|\mu+\nu|+|\delta|+1}\left(\delta!^2\mu!\nu!\right)^{a+\eps-1}
	\end{align*}
	Then \eqref{bmrecursive} implies the following inequality
	\begin{align}\label{diagonal 2}
	\begin{split}
	\left|D_v^\mu D_{\oo{v}}^\nu b_m(x,z)\right|
	&\leq\sum_{l=1}^m
	\sum_{|\delta|=l}\delta!^{2a+2\eps-1}
	\sum_{\alpha,\beta\leq \delta}
	\sum_{|\xi+\eta|\leq |\alpha+\beta|}
	\sum_{\mu_0\leq \mu}\sum_{\nu_0\leq \nu} \frac{\left|D_v^{\widetilde{\xi}+\mu_0} D_{\oo{v}}^{\widetilde{\eta}+\nu_0} b_{m-l}(x,z)\right|}{(\xi!\eta!)^{a+\eps}}
	\\&
	\quad\quad \cdot
	C^{|\mu-\mu_0+\nu-\nu_0|+|\delta+\xi+\eta|}\binom{\mu}{\mu_0}\binom{\nu}{\nu_0}
	(\mu-\mu_0)!^{a+\eps}(\nu-\nu_0)!^{a+\eps}.
	\end{split}
	\end{align}
	
	Now we will change all the derivatives $\left|D_v^\mu D_{\oo{v}}^\nu b_m(x,z)\right|$ to the notation $b_{m,\mu\bar{\nu}}$ in the above inequality.
	Note that on the right hand side of \eqref{bmrecursive}, when $\nu\neq 0$, the anti-holomorphic derivative will hit on either $b_{m-l}(x,z)$ or at least one of the these functions $z_i(x,y,\theta), \Delta_0(x,y,\theta) \in \mathcal{A}^{a,\eps}_{\theta}$ and $\varphi_i(x,z)\in \mathcal{A}^{a,\eps}_{diag}$ for $1\leq i\leq n$. We will consider each case in the following. If the anti-holomorphic derivative hits on $z_i(x,y,\theta)$ or $\Delta_0(x,y,\theta)$, since the derivatives are evaluated at $(x,x,\psi_x(x,z))$, we will have the extra factor $\exp(-b|x-\bar{z}|^{-\frac{1}{a-1}})$ on the right hand side. If the anti-holomorphic derivative hits on $\varphi_i(x,z)$, we also have the extra factor $\exp(-b|x-\bar{z}|^{-\frac{1}{a-1}})$ since $\varphi_i(x,z)\in \mathcal{A}^{a,\eps}_{diag}$. The last case is that the anti-holomorphic derivative hits on $b_{m-l}(x,z)$, which means $|\eta+\nu_0|\neq 0$. We again have the extra factor $\exp(-b|x-\bar{z}|^{-\frac{1}{a-1}})$ when we change $D_z^{\xi+\mu_0}D_{\bar{z}}^{\eta+\nu_0}b_{m-l}(x,z)$ into $b_{m-l,\widetilde{\xi}+\mu_0\oo{\widetilde{\eta}+\nu_0}}$. So no matter in which case, at least one $\exp(-b|x-\bar{z}|^{-\frac{1}{a-1}})$ will appear on the right side when $\nu\neq 0$. And thus the desired result follows. 
\end{proof}

Next we use this lemma to prove Theorem \ref{MainLemma}.

\subsection{Proof of Theorem \ref{MainLemma}}

	For convenience, we define
	\begin{equation}
	a_{m,\mu\bar{\nu}}=\frac{b_{m,\mu\bar{\nu}}}{(2m+1)!^{a+\eps}\mu!^{a+\eps}\nu!^{a+\eps}}.
	\end{equation}
	Then by Lemma \ref{bmLemmaGevrey}
	\begin{align}\label{amrecursiveGevrey}
	\begin{split}
	&a_{m,\mu\bar{\nu}}
	\\
	&\leq\sum_{l=1}^m
	\sum_{|\delta|=l}
	\sum_{\alpha,\beta\leq \delta}
	\sum_{|\xi+\eta|\leq |\alpha+\beta|}
	\sum_{\mu_0\leq \mu}\sum_{\nu_0\leq \nu} \frac{ a_{m-l,\widetilde{\xi}+\mu_0\oo{\widetilde{\eta}+\nu_0}}}{\binom{2m+1}{2l}^{a+\eps}}
	\binom{\widetilde{\xi}+\mu_0}{\widetilde{\xi}}^{a+\eps}
	\binom{\widetilde{\eta}+\nu_0}{\widetilde{\eta}}^{a+\eps}
	C^{|\mu-\mu_0+\nu-\nu_0+|\delta+\xi+\eta|},
	\end{split}
	\end{align}
	where $\xi,\eta\in (\mathbb{Z}^{\geq 0})^n$ and $\widetilde{\xi}=(0,\cdots 0,\xi), \widetilde{\eta}=(0, \cdots, 0,\eta)\in (\mathbb{Z}^{\geq 0})^{2n}$.
	Since $b_0(x,z)=1$, we have
	\begin{equation}\label{a0Gevrey}
	a_{0,\mu\bar{\nu}}=
	\begin{cases}
	1 & \mu=\nu=(0,0,\cdots,0),\\
	0 & \mbox{otherwise}.
	\end{cases}
	\end{equation}
	We will argue by induction on $m$ to prove that for any integer $m\geq 0$ and multi-index $\mu,\nu\geq 0$,
	\begin{equation}\label{amupperboundGevrey}
	a_{m,\mu\bar{\nu}}\leq\binom{2m+|\mu+\nu|}{|\mu+\nu|}^{a+\eps} A^{m}(2C)^{|\mu+\nu|},
	\end{equation}
	where $C$ is the same constant which appears on the right hand side of \eqref{amrecursiveGevrey} and $A$ is a bigger constant to be selected later. Without losing of generality, we assume $C\geq 1$. Obviously \eqref{a0Gevrey} implies that \eqref{amupperboundGevrey} holds for $m=0$ and any $\mu,\nu\geq 0$. Assume that \eqref{amupperboundGevrey} holds up to $m-1$ and we proceed to $m$. By \eqref{amrecursiveGevrey}, we have
	\begin{align*}
	a_{m,\mu\bar{\nu}}
	&\leq\sum_{l=1}^m
	\sum_{|\delta|=l}
	\sum_{\alpha,\beta\leq \delta}
	\sum_{|\xi+\eta|\leq |\alpha+\beta|}
	\sum_{\mu_0\leq \mu}\sum_{\nu_0\leq \nu} 
	\frac{A^{m-l}}{\binom{2m+1}{2l}^{a+\eps}}
	\binom{\widetilde{\xi}+\mu_0}{\widetilde{\xi}}^{a+\eps}
	\binom{\widetilde{\eta}+\nu_0}{\widetilde{\eta}}^{a+\eps}
	\\&
	\qquad \cdot
	2^{|\xi+\eta|+|\mu_0+\nu_0|}C^{|\mu+\nu|+|\delta+2\xi+2\eta|}	\binom{2(m-l)+|\widetilde{\xi}+\mu_0+\widetilde{\eta}+\nu_0|}{|\widetilde{\xi}+\mu_0+\widetilde{\eta}+\nu_0|}^{a+\eps} 
	\\
	& \leq\sum_{l=1}^m
	\sum_{|\delta|=l}
	\sum_{|\xi+\eta|\leq 2l}
	\sum_{\mu_0\leq \mu}\sum_{\nu_0\leq \nu} 
	\frac{A^{m-l}}{\binom{2m+1}{2l}^{a+\eps}}
	\binom{\widetilde{\xi}+\mu_0}{\widetilde{\xi}}^{a+\eps}
	\binom{\widetilde{\eta}+\nu_0}{\widetilde{\eta}}^{a+\eps}
	\\&
	\qquad\cdot
	2^{|\xi+\eta|+|\mu_0+\nu_0|}C^{|\mu+\nu|+|\delta+2\xi+2\eta|}
	\binom{2(m-l)+|\widetilde{\xi}+\mu_0+\widetilde{\eta}+\nu_0|}{|\widetilde{\xi}+\mu_0+\widetilde{\eta}+\nu_0|}^{a+\eps} 
	\cdot\#\{\alpha\leq \delta\} \cdot	\#\{\beta\leq \delta\}.
	\end{align*}
	
	Due to the fact
	\begin{equation*}
	\#\{\alpha\leq \delta\}
	=\#\{\beta\leq \delta\}
	\leq 2^{|\delta|},
	\end{equation*}
	it follows that
	\begin{align*}
	a_{m,\mu\bar{\nu}}
	& \leq\sum_{l=1}^m
	\sum_{|\delta|=l}
	\sum_{|\xi+\eta|\leq 2l}
	\sum_{\mu_0\leq \mu}\sum_{\nu_0\leq \nu} 
	\frac{A^{m-l}}{\binom{2m+1}{2l}^{a+\eps}}
	\binom{\widetilde{\xi}+\mu_0}{\widetilde{\xi}}^{a+\eps}
	\binom{\widetilde{\eta}+\nu_0}{\widetilde{\eta}}^{a+\eps}
	\\&
	\qquad\cdot
	2^{|\xi+\eta|+|\mu_0+\nu_0|+2|\delta|}C^{|\mu+\nu|+|\delta+2\xi+2\eta|}
	\binom{2(m-l)+|\widetilde{\xi}+\mu_0+\widetilde{\eta}+\nu_0|}{|\widetilde{\xi}+\mu_0+\widetilde{\eta}+\nu_0|}^{a+\eps} 
	\\
	&\leq 
	A^m(2C)^{|\mu+\nu|}\sum_{l=1}^m
	\sum_{|\delta|=l}
	\sum_{|\xi+\eta|\leq 2l}
	\sum_{\mu_0\leq \mu}\sum_{\nu_0\leq \nu} 
	\frac{1}{\binom{2m+1}{2l}^{a+\eps}}
	\binom{\widetilde{\xi}+\mu_0}{\widetilde{\xi}}^{a+\eps}
	\binom{\widetilde{\eta}+\nu_0}{\widetilde{\eta}}^{a+\eps}A^{-l}
	\\&
	\qquad\cdot
	2^{4l+|\mu_0+\nu_0|-|\mu+\nu|}C^{5l}
	\binom{2(m-l)+|\widetilde{\xi}+\mu_0+\widetilde{\eta}+\nu_0|}{|\widetilde{\xi}+\mu_0+\widetilde{\eta}+\nu_0|}^{a+\eps} 
	\end{align*}
	
	Moreover, since
	\begin{align*}
	\#\{|\delta|=l\}=\binom{l+n-1}{n-1}\leq 2^{l+n-1}\leq 2^{nl},
	\end{align*}
	we have
	\begin{align}\label{am2Gevrey}
	\begin{split}
	a_{m,\mu\bar{\nu}}
	&\leq 
	A^m(2C)^{|\mu+\nu|}\sum_{l=1}^m
	\sum_{|\xi+\eta|\leq 2l}
	\sum_{\mu_0\leq \mu}\sum_{\nu_0\leq \nu} 
	\frac{1}{\binom{2m+1}{2l}^{a+\eps}}
	\binom{\widetilde{\xi}+\mu_0}{\widetilde{\xi}}^{a+\eps}
	\binom{\widetilde{\eta}+\nu_0}{\widetilde{\eta}}^{a+\eps}
	\\&
	\qquad\cdot
	\binom{2(m-l)+|\widetilde{\xi}+\mu_0+\widetilde{\eta}+\nu_0|}{|\widetilde{\xi}+\mu_0+\widetilde{\eta}+\nu_0|}^{a+\eps} 		2^{|\mu_0+\nu_0|-|\mu+\nu|}\left(\frac{2^{n+4}C^5}{A}\right)^l
	\end{split}
	\end{align}
	
	In the next step we apply the combinatorial inequality
	\begin{equation*}
	\binom{\widetilde{\xi}+\mu_0}{\mu_0}\binom{\widetilde{\eta}+\nu_0}{\nu_0}
	\leq \binom{\widetilde{\xi}+\widetilde{\eta}+\mu_0+\nu_0}{\mu_0+\nu_0}
	\leq \binom{|\xi+\eta|+|\mu_0+\nu_0|}{|\mu_0+\nu_0|}
	\end{equation*}
	and
	\begin{align*}
	\binom{2(m-l)+|\xi+\eta|+|\mu_0+\nu_0|}{|\xi+\eta|+|\mu_0+\nu_0|}\binom{|\xi+\eta|+|\mu_0+\nu_0|}{|\mu_0+\nu_0|}
	=\binom{2(m-l)+|\xi+\eta|+|\mu_0+\nu_0|}{|\mu_0+\nu_0|}\binom{2m-2l+|\xi+\eta|}{2m-2l}.
	\end{align*}
	Observe that, since $|\xi+\eta|\leq 2l, \mu_0\leq \mu,\nu_0\leq\nu$, we have
	\begin{align*}
	\binom{2(m-l)+|\xi+\eta|+|\mu_0+\nu_0|}{|\mu_0+\nu_0|}
	\leq \binom{2m+|\mu+\nu|}{|\mu+\nu|}.
	\end{align*}
	Plugging these into \eqref{am2Gevrey}, we obtain
	\begin{align*}
	\begin{split}
	a_{m,\mu\bar{\nu}}
	&\leq 
	A^m(2C)^{|\mu+\nu|}\sum_{l=1}^m
	\frac{1}{\binom{2m+1}{2l}^{a+\eps}}
	\binom{2m+|\mu+\nu|}{|\mu+\nu|}^{a+\eps}
	\sum_{|\xi+\eta|\leq 2l}
	\binom{2m-2l+|\xi+\eta|}{2m-2l}^{a+\eps}
	\\&
	\qquad\qquad\qquad\quad\cdot 		
	\sum_{\mu_0\leq \mu}\sum_{\nu_0\leq \nu} 
	2^{|\mu_0+\nu_0|-|\mu+\nu|}\left(\frac{2^{n+4}C^5}{A}\right)^l
	\end{split}
	\end{align*}
	
	Again since
	\begin{equation}
	\#\{|\xi+\eta|=k\}=\binom{k+2n-1}{2n-1}\leq 2^{k+2n-1},
	\end{equation}
	the sum over index $\xi,\eta$ on the right hand side can be estimated as
	\begin{align*}
	\sum_{|\xi+\eta|\leq 2l}     \binom{2m-2l+|\xi+\eta|}{2m-2l}^{a+\eps}
	=\sum_{k=0}^{2l}\sum_{|\xi+\eta|=k}     \binom{2m-2l+k}{2m-2l}^{a+\eps}
	\leq 2^{2l+2n-1}\binom{2m+1}{2m-2l+1}^{a+\eps}.
	\end{align*}
	Therefore,
	\begin{align*}
	a_{m,\mu\bar{\nu}}
	\leq& 
	A^m(2C)^{|\mu+\nu|} \binom{2m+|\mu+\nu|}{|\mu+\nu|}^{a+\eps}
	\sum_{l=1}^m
	\left(\frac{2^{3n+6}C^5}{A}\right)^l	 		
	\sum_{\mu_0\leq \mu}2^{|\mu_0|-|\mu|}\sum_{\nu_0\leq \nu} 
	2^{|\nu_0|-|\nu|}
	\\
	\leq& 
	A^m(2C)^{|\mu+\nu|} \binom{2m+|\mu+\nu|}{|\mu+\nu|}^{a+\eps}
	\sum_{l=1}^m
	\left(\frac{2^{3n+6}C^5}{A}\right)^l2^{4n}	 		
	\end{align*}	
	By taking $A= 2^{7n+7}C^5$, we surely have $\sum_{l=1}^m\left(\frac{2^{3n+6}C^5}{A}\right)^l2^{4n}<1$, which implies that $$a_{m,\mu\bar{\nu}}\leq A^m(2C)^{|\mu+\nu|}\binom{2m+|\mu+\nu|}{|\mu+\nu|}^{a+\eps}.$$ 
	So if we write $a_{m,\mu\bar{\nu}}$ in terms of $D_v^\mu D_{\oo{v}}^{\nu}b_m(x,z)$, then by the continuity of each $D_v^\mu D_{\oo{v}}^{\nu}b_m(x,z)$ for all $x,z\in U$, we have
	\begin{align*}
	\begin{split}
	|D_v^\mu D_{\oo{v}}^{\nu}b_m(x,z)|&\leq  (2m+1)!^{a+\eps}\mu!^{a+\eps}\nu!^{a+\eps} a_{m,\mu\bar{\nu}}
	\lambda_{b,|\nu|}(x,x,z)
	\\
	&\leq (64^{a+\eps}A)^{m+|\mu+\nu|}m!^{2a+2\eps}\mu!^{a+\eps}\nu!^{a+\eps}
	\lambda_{b,|\nu|}(x,x,z).
	\end{split}
	\end{align*}
	Thus \eqref{bmupperbound} follows by renaming $64^{a+\eps}A$ to $C$.
	
	In particular, when we are restricted to diagonal $z=\bar{x}$, 
	\begin{equation*}
	D_v^\mu D_{\oo{v}}^{\nu}b_m(x,\bar{x})=0, \qquad \mbox{ for any multi-indices $\mu\geq 0$ and $|\nu|\neq 0$}.
	\end{equation*}
	And thus when $z=\bar{x}$, the recursive inequality \eqref{diagonal 2} reduces to
	\begin{align*}
		\left|D_v^\mu b_m(x,\bar{x})\right|
		\leq\sum_{l=1}^m
		\sum_{|\delta|=l}\delta!^{2a+2\eps-1}
		\sum_{\alpha,\beta\leq \delta}
		\sum_{|\xi|\leq |\alpha+\beta|}
		\frac{\left|D_v^{\widetilde{\xi}+\mu_0} b_{m-l}(x,\bar{x})\right|}{(\xi!\eta!)^{a+\eps}}
		C^{|\mu-\mu_0|+|\delta+\xi|}\binom{\mu}{\mu_0}
		(\mu-\mu_0)!^{a+\eps}.
	\end{align*}
	Note that the constant $\eps$ only comes from \eqref{diagonal 1.1} and \eqref{diagonal 1.2} because of the derivatives of $\varphi_i(x,z)\in \mathcal{A}^{a,\eps}_{diag}$ for $1\leq i\leq n$ and $z(x,y,\theta), \Delta_0(x,y,\theta)\in \mathcal{A}^{a,\eps}_\theta$. By the definitions of $\mathcal{A}^{a,\eps}_{diag}$ and $\mathcal{A}^{a,\eps}_\theta$, $\eps$ can be replaced by $0$ when we are restricted to $x=y=\bar{z}$. Therefore, \eqref{diagonal 2} further reduces to
	\begin{align*}
	\left|D_v^\mu b_m(x,\bar{x})\right|
	\leq\sum_{l=1}^m
	\sum_{|\delta|=l}\delta!^{2a-1}
	\sum_{\alpha,\beta\leq \delta}
	\sum_{|\xi|\leq |\alpha+\beta|}
	\frac{\left|D_v^{\widetilde{\xi}+\mu_0} b_{m-l}(x,\bar{x})\right|}{(\xi!\eta!)^{a+\eps}}
	C^{|\mu-\mu_0|+|\delta+\xi|}\binom{\mu}{\mu_0}
	(\mu-\mu_0)!^{a+\eps}.
	\end{align*}
	By using a similar inductive argument as that of estimating $\left|D_v^\mu D_{\oo{v}}^\nu b_m(x,z)\right|$, we obtain for any $x\in U$,
	\begin{align*}
	\left|D_v^\mu b_m(x,\bar{x})\right|\leq C^{m+|\mu|}m!^{2a}\mu!^{a}.
	\end{align*}

\section{Optimality of the upper bounds on Bergman coefficients $b_m$} \label{optimal}
In this section, we will show that although it would be desirable to improve the estimate \eqref{bmupperbound} to
\begin{equation}\label{bmconjecture}
\left | D_v^\alpha D_{\oo{v}} ^\beta b_m(x,z) \right |  \leq C^{m+|\alpha|+|\beta|}m!^{2a+2\eps-1}\alpha!^{a+\eps} \beta!^{a+\eps} \exp{\left(-b (1- \delta_0(|\beta|)){|x-\bar{z}|^{-\frac{1}{a-1}}}\right)}\lambda_{b,|\eta|}(x,x,z),
\end{equation}
it is not possible to prove it by the recursive inequality \eqref{bmGevrey}. Here we provide an example which satisfies \eqref{bmGevrey} while fails \eqref{bmconjecture}. For simplicity, we assume $C=1$ in \eqref{bmGevrey}. Let's consider the worse case when the equality holds, i.e.
\begin{align}\label{bmoptimalGevrey}
\begin{split}
&b_{m,\mu\bar{\nu}}
\\
&=\sum_{l=1}^m
\sum_{|\delta|=l}\delta!^{2a+2\eps-1}
\sum_{\alpha,\beta\leq \delta}
\sum_{|\xi+\eta|\leq |\alpha+\beta|}
\sum_{\mu_0\leq \mu}\sum_{\nu_0\leq \nu} \frac{ b_{m-l,\widetilde{\xi}+\mu_0\oo{\widetilde{\eta}+\nu_0}}}{(\xi!\eta!)^{a+\eps}}
\binom{\mu}{\mu_0}\binom{\nu}{\nu_0}
(\mu-\mu_0)!^{a+\eps}(\nu-\nu_0)!^{a+\eps},
\end{split}
\end{align}
One can easily check that this recursive equation uniquely defines $\{b_{m,\mu\bar{\nu}} \}$ given an initial data $\{ b_{0, \mu\bar{\nu}} \}$.
We shall only focus on the terms $b_{m,k\widetilde{e_1}}$ where $e_1=(1,0,\cdots,0)\in \mathbb{R}^n$, $\widetilde{e}_1=(0,\cdots, 0, e_1)\in \mathbb{R}^{2n}$ and show by induction that
\begin{equation}\label{bmLB}
b_{m,k\widetilde{e_1}}\geq 2^{-(a+\eps)m}(2m-2+k)!^{a+\eps} \hspace{12pt} \mbox{ for any }m\geq 1,k\geq 0.
\end{equation}
First let's check $b_{1,k\widetilde{e_1}}$. Since we know
\begin{equation*}
b_{0,\mu\bar{\nu}}=
\begin{cases}
1 & \mu=\nu=(0,0,\cdots,0),\\
0 & \mbox{otherwise},
\end{cases}
\end{equation*}
by \eqref{bmoptimalGevrey} we have
\begin{align}\label{eq 3.15}
b_{1,\mu\bar{\nu}}
=
\sum_{|\delta|=1}
\sum_{\alpha,\beta\leq \delta}
\mu!^{a+\eps}\nu!^{a+\eps}
\geq \mu!^{a+\eps}\nu!^{a+\eps}.
\end{align}
Therefore \eqref{bmLB} holds for $b_{1,k\widetilde{e_1}}$.
Assume that \eqref{bmLB} holds for $b_{1,k\widetilde{e_1}},b_{2,k\widetilde{e_1}},\cdots b_{m-1,k\widetilde{e_1}}$. Then by only considering the terms with index $l=|\alpha|=|\beta|=1$, $\mu_0=\mu$ and $\xi=2e_1$, $\eta=0$ in \eqref{eq 3.15}, we obtain for $m\geq 2$
\begin{align*}
b_{m,k\widetilde{e}_1}
\geq 
\sum_{|\delta|=1}
\sum_{|\alpha|=|\beta|=1}
\frac{ b_{m-1,(k+2)\widetilde{e_1}}}{2^{a+\eps}}
\geq 
\frac{ b_{m-1,(k+2)\widetilde{e_1}}}{2^{a+\eps}}
\geq 
2^{-(a+\eps)m}(2m-2+k)!^{a+\eps}.	
\end{align*}
Note that if in particular we put $k=0$ into \eqref{bmLB}, then we get
\begin{align*}
b_{m,0}\geq \left(\frac{1}{8}\right)^{(a+\eps)m}m!^{2a+2\eps},
\end{align*} 
which show that up to an exponential factor $C^m$, $m!^{2a+2\eps}$ is the best upper bound one can hope from the recursive inequality \eqref{bmGevrey}.

\section{Proofs of main lemmas on almost holomorphic extensions of Gevrey functions}\label{ProofOfGevreyStuff}
In this section, we will complete all the proofs skipped in Section \ref{AHEoGF}. 

\begin{proof}[Proof of Lemma \ref{lem almostholomorphic}]
	We will prove the estimate on $ D_{\bar{z}}F(f)$. The other one follows in the same way. For simplicity, we denote 
	$$\widetilde{\chi}(|\alpha+\beta|)=\chi\big(|\alpha+\beta|^{2(a-1)}4^{a-1}C_1^2\left|y-\bar{z}\right|^2\big).$$
	For any $1\leq i\leq n$, we have
	\begin{align*}
	D_{\bar{z}_i}&F(f)\\
	&=\frac{1}{2}\sum_{\alpha,\beta\geq 0} \frac{ D_x^{\alpha+e_i} D_{\bar{x}}^{\beta} f}{\alpha!\beta!}\left(\frac{y-\bar{z}}{2}\right)^\alpha
	\left(\frac{z-\bar{y}}{2}\right)^\beta \Big(\widetilde{\chi}(|\alpha+\beta|)-\widetilde{\chi}\left(|\alpha+\beta+e_i|\right)\Big)
	\\&
	+2\sum_{\alpha,\beta\geq 0} \frac{ D_x^\alpha D_{\bar{x}}^\beta f}{\alpha!\beta!}\left(\frac{y-\bar{z}}{2}\right)^\alpha \left(\frac{z-\bar{y}}{2}\right)^{\beta+e_i}|\alpha+\beta|^{2(a-1)}4^{a-1}C_1^2\chi'\left(|\alpha+\beta|^{2(a-1)}4^{a-1}C_1^2\left|y-\bar{z}\right|^2\right).
	\\
	&:=I+II.
	\end{align*}
	We will use $C$ to denote a constant depend on $a, \eps, C_1(f)$ and the cut-off function $\chi$, which may change from line to line. Since $f\in G^a(U)$ and by Stirling's formula, each term in $I$ is bounded by
	\begin{align*}
	C_0C_1^{|\alpha+\beta|+1}(\alpha_i+1)^a&(\alpha!\beta!)^{a-1}\left|\frac{y-\bar{z}}{2}\right|^{|\alpha+\beta|}
	\\
	&\leq
	CC_0C_1^{|\alpha+\beta|}\left(\left|\alpha+\beta\right|+1\right)^{a+\frac{a-1}{2}}\left(\frac{|\alpha+\beta|}{e}\right)^{(a-1)|\alpha+\beta|}\left|\frac{y-\bar{z}}{2}\right|^{|\alpha+\beta|}.
	\end{align*}
	
	Note that the difference of cut-off functions in $I$ is zero unless
	\begin{equation}\label{IndexRange}
	\left|\alpha+\beta\right|\in \left[\frac{1}{2}\left(\sqrt{2}C_1|y-\bar{z}|\right)^{-\frac{1}{a-1}}-1,\frac{1}{2}\left(C_1|y-\bar{z}|\right)^{-\frac{1}{a-1}}\right].
	\end{equation}
	It implies that each term in $I$ is bounded by
	\begin{equation*}
	CC_0(|\alpha+\beta|+1)^{a+\frac{a-1}{2}}2^{-|\alpha+\beta|}e^{-(a-1)|\alpha+\beta|}
	\leq
	CC_0e^{-\frac{a-1}{2}(\sqrt{2}C_1|y-\bar{z}|)^{-\frac{1}{a-1}}}.
	\end{equation*}
	Since there are less than $\left(\frac{1}{2}(C_1|y-\bar{z}|)^{-\frac{1}{a-1}}+1\right)^{2n}$ many terms in $I$, we have
	\begin{equation}
	|I|\leq CC_0 e^{-b|y-\bar{z}|^{-\frac{1}{a-1}}},
	\end{equation}
	where $b$ is a positive constant depending on $a,C_1=C_1(f)$.
	
	For the second term $II$, similarly we have \eqref{IndexRange} or $\chi'$ vanishes otherwise. And thus each term is bounded by
	\begin{align*}
	CC_0 C_1&^{|\alpha+\beta|+2}\left(\frac{|\alpha+\beta|}{e}\right)^{(a-1)|\alpha+\beta|}\left|\frac{y-\bar{z}}{2}\right|^{|\alpha+\beta|+1}|\alpha+\beta|^{\frac{5}{2}(a-1)}
	\\
	\leq &
	CC_0 e^{-(a-1)|\alpha+\beta|}|\alpha+\beta|^{\frac{3}{2}(a-1)}
	\\
	\leq &
	CC_0 e^{-\frac{a-1}{4}(\sqrt{2}C_1|y-\bar{z}|)^{-\frac{1}{a-1}}}
	\\
	\leq &
	CC_0 e^{-b|y-\bar{z}|^{-\frac{1}{a-1}}}.
	\end{align*}
	So we have
	\begin{equation*}
	| D_{\bar{z}_i}F(f)(y,z)|\leq CC_0 \exp\left({-b|y-\bar{z}|^{-\frac{1}{a-1}}}\right) \qquad \mbox{for } 1\leq i\leq n. 
	\end{equation*}
	Thus the result follows.
\end{proof}

\begin{proof}[Proof of Lemma \ref{AHE Uniqueness}]
	Since $F$ is an almost holomorphic extension of $f$, by taking the Taylor expansion, for any $N\in \mathbb{N}$ we have
	\begin{align*}
	F(x+y,\bar{x}+z)&=\sum_{|\alpha+\beta|\leq N-1}\frac{D_y^{\alpha} D_z^{\beta}F}{\alpha!\beta!}\left(x,\bar{x}\right)y^\alpha z^\beta+O\left(|(y,z)|^{N}\right)\\
	&=\sum_{|\alpha+\beta|\leq N-1}\frac{D_x^\alpha D_{\bar{x}}^{\beta}f}{\alpha!\beta!}\left(x\right)y^\alpha z^\beta+O\left(|(y,z)|^{N}\right).
	\end{align*}
	If we take $x=\frac{y+\bar{z}}{2}$ and replace $(y,z)$ by $(\frac{y-\bar{z}}{2},\frac{z-\bar{y}}{2})$, then
	\begin{equation*}
	F(y,z)=\sum_{|\alpha+\beta|\leq N-1}\frac{D_x^\alpha D_{\bar{x}}^{\beta}f}{\alpha!\beta!}\left(\frac{y+\bar{z}}{2}\right)\left(\frac{y-\bar{z}}{2}\right)^\alpha \left(\frac{z-\bar{y}}{2}\right)^\beta+O\left(\left|y-\bar{z}\right|^{N}\right).
	\end{equation*} 
	Similarly, the same identity holds for $\widetilde{F}(y,z)$. Therefore, for any $N\in \mathbb{N}$, we have
	\begin{equation*}
	F(y,z)-\widetilde{F}(y,z)=O\left(|y-\bar{z}|^N\right).
	\end{equation*}
\end{proof}

\begin{proof}[Proof of Lemma \ref{AHE2}]
	To prove this lemma we first need to obtain some estimates on the derivatives of our cut-off function $\chi$.
	\begin{lemma}
		Let $\eps>0$ be a constant and $\chi\in G^{1+\eps}(\mathbb{R})$ be the cut-off function constructed in \eqref{Gevreycutoff}. Then there exists some positive constant $C=C(\chi)$ such that for any multi-indices $\gamma,\delta,\xi,\eta\geq 0$, we have
		\begin{align*}
		\| D_y^\gamma D_z^\delta D_{\bar{y}}^\xi D_{\bar{z}}^\eta &\left(\chi\left(|\alpha+\beta|^{2(a-1)}4^{a-1}C_1^2\left|y-\bar{z}\right|^2\right)\right)\|_{L^\infty(\mathbb{C}^{2n})}
		\\   
		& \leq \left(2^{a-1}CC_1|\alpha+\beta|^{a-1}\right)^{|\gamma+\eta+\xi+\delta|}
		(\gamma+\eta+\xi+\delta)!^{1+\eps}.
		\end{align*}
	\end{lemma}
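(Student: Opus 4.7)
My plan is to apply a multivariate Faà di Bruno formula to the composition $\chi \circ g$, where $g(y,z) := A|y-\bar z|^2 = A\sum_{i=1}^n (y_i-\bar z_i)(\bar y_i - z_i)$ and $A := |\alpha+\beta|^{2(a-1)}4^{a-1}C_1^2$. Three features of $g$ drive the estimate: (i) $g$ is a polynomial of degree $2$ in the underlying real variables, so $D^\kappa g \equiv 0$ for $|\kappa|\geq 3$, while $|D^\kappa g| \leq A|y-\bar z|$ for $|\kappa|=1$ and $|D^\kappa g|\leq A$ for $|\kappa|=2$; (ii) because $g = \sum_i A g_i$ with $g_i$ depending only on the $i$th coordinate's variables $(y_i,\bar y_i,z_i,\bar z_i)$, every nonzero second derivative of $g$ is ``monochromatic'' in a single coordinate index $i\in\{1,\dots,n\}$; (iii) on the support of $\chi\circ g$, the inequality $A|y-\bar z|^2\leq 1$ holds, so $A^{1/2}|y-\bar z|\leq 1$.

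Applying Faà di Bruno, one obtains
\begin{equation*}
D^\nu(\chi\circ g) \;=\; \sum_{m=1}^{|\nu|}\chi^{(m)}(g)\sum_{\pi\in\mathcal{P}_m(\nu)} c_\pi \prod_{B\in\pi} D^{e_B}g,
\end{equation*}
with $\nu:=(\gamma,\delta,\xi,\eta)$ and $\mathcal{P}_m(\nu)$ the set of partitions of the positions of $\nu$ into $m$ nonempty blocks. By (i), only partitions into blocks of size $1$ or $2$ survive; by (ii), every size-$2$ block is constrained to a single coordinate $i$. A partition with $k_1$ singletons and $k_2$ doubletons satisfies $k_1+2k_2=|\nu|$ and has $m=k_1+k_2$ blocks. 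Using (iii), each surviving product is bounded by
\begin{equation*}
\bigl(A|y-\bar z|\bigr)^{k_1}\, A^{k_2} \;=\; A^{|\nu|/2}\cdot\bigl(A^{1/2}|y-\bar z|\bigr)^{k_1} \;\leq\; A^{|\nu|/2} \;=\; \bigl(2^{a-1}C_1|\alpha+\beta|^{a-1}\bigr)^{|\nu|},
\end{equation*}
which is precisely the exponential factor in the target. Coupled with the Gevrey estimate $|\chi^{(m)}(g)|\leq C_0(\chi)C_1(\chi)^m m!^{1+\eps}$, this reduces the lemma to a combinatorial inequality.

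The technical heart of the proof is the combinatorial step: one must establish
\begin{equation*}
\sum_{m=1}^{|\nu|} C_1(\chi)^m\, m!^{1+\eps}\,\#\mathcal{P}_m^{\leq 2}(\nu) \;\leq\; C^{|\nu|}\,(\gamma+\delta+\xi+\eta)!^{1+\eps},
\end{equation*}
where the factorial on the right is the \emph{multi-index} factorial $\prod_i(\gamma_i+\delta_i+\xi_i+\eta_i)!$. The naive bound $m!\leq|\nu|!$ would instead give the weaker $|\nu|!^{1+\eps}$ and is insufficient. The monochromatic constraint (ii) is what saves the day: it decouples the partition problem coordinate by coordinate, giving a count of the form $\prod_i \mu_i!/[(\mu_i-2k_{2,i})!\,2^{k_{2,i}}k_{2,i}!]$ with $\mu_i:=\gamma_i+\delta_i+\xi_i+\eta_i$ and $k_{2,i}$ the number of doubleton blocks at coordinate $i$. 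The hard part will be to redistribute the factor $m!^{1+\eps} = (|\mu|-\sum_i k_{2,i})!^{1+\eps}$ across the $n$ coordinate-wise partition problems via a standard multinomial inequality; I expect the resulting excess to be absorbable into an overall constant $C^{|\nu|}$, after which summing over the $k_{2,i}$'s produces exactly $\prod_i \mu_i!^{1+\eps}$. Choosing $C$ large enough to absorb all $\chi,\eps,a,n$-dependent constants then gives the claimed bound.
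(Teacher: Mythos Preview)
Your Faà di Bruno approach is correct and will yield the stated bound, but it differs from the paper's route. The paper observes that the inner function $g=A|y-\bar z|^2$ is \emph{holomorphic} in $(y,\bar z)$ (each $\partial_{y_i}$ or $\partial_{\bar z_i}$ simply multiplies by $\pm A(\bar y_i-z_i)$ and raises the order of $\chi$ by one, with no cross terms), so one has the closed form
\[
D_y^\gamma D_{\bar z}^\eta\big(\chi(g)\big)=(-1)^{|\eta|}\big(A(\bar y-z)\big)^{\gamma+\eta}\chi^{(|\gamma+\eta|)}(g).
\]
The remaining derivatives $D_z^\delta D_{\bar y}^\xi$ are then handled by a single Leibniz expansion over $\xi_0\leq\xi,\ \delta_0\leq\delta$, and the bound follows from the Gevrey estimate on $\chi$ together with $A|y-\bar z|^2\leq 1$ on the support. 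This completely avoids the partition combinatorics.

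Your approach trades this structural shortcut for the general Faà di Bruno machinery; the price is exactly the combinatorial step you flag. That step does go through, but your sketch is slightly too optimistic where you write ``I expect the resulting excess to be absorbable.'' The point is that after the multinomial bound $(|\mu|-K)!\leq n^{|\mu|-K}\prod_i(\mu_i-k_{2,i})!$, the $i$th factor becomes $\frac{\mu_i!}{(\mu_i-2k_{2,i})!\,2^{k_{2,i}}k_{2,i}!}\,(\mu_i-k_{2,i})!^{1+\eps}$, and dividing by $\mu_i!^{1+\eps}$ gives
\[
\frac{(\mu_i-k_{2,i})!}{(\mu_i-2k_{2,i})!\,k_{2,i}!\,2^{k_{2,i}}}\cdot\Big(\tfrac{(\mu_i-k_{2,i})!}{\mu_i!}\Big)^{\eps}
=\frac{1}{2^{k_{2,i}}}\binom{\mu_i-k_{2,i}}{k_{2,i}}\cdot\Big(\tfrac{(\mu_i-k_{2,i})!}{\mu_i!}\Big)^{\eps}\leq 2^{\mu_i},
\]
so summing over $k_{2,i}$ costs only $C^{\mu_i}$. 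Had you instead crudely bounded $(\mu_i-k_{2,i})!\leq\mu_i!$ first, you would be left with the involution count $\sum_k\binom{\mu_i}{2k}(2k-1)!!\sim\mu_i^{\mu_i/2}$, which is \emph{not} absorbable into $C^{\mu_i}$; the cancellation between one power of $\mu_i!/(\mu_i-k_{2,i})!$ and the partition count is essential. With that caveat made explicit, your argument is complete and gives the same bound as the paper, just via heavier machinery.
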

	
	\begin{proof}
		By a straightforward calculation, we have
		\begin{align*}
		D_y^\gamma \left(\chi\left(|\alpha+\beta|^{2(a-1)}4^{a-1}C_1^2\left|y-\bar{z}\right|^2\right)\right)
		=\left(|\alpha+\beta|^{2(a-1)}4^{a-1}C_1^2(\bar{y}-z)\right)^\gamma\chi^{(|\gamma|)},
		\end{align*}
		\begin{align*}
		D_y^\gamma  D_{\bar{z}}^\eta \left(\chi\left(|\alpha+\beta|^{2(a-1)}4^{a-1}C_1^2\left|y-\bar{z}\right|^2\right)\right)
		=(-1)^{|\eta|}\left(|\alpha+\beta|^{2(a-1)}4^{a-1}C_1^2(\bar{y}-z)\right)^{\gamma+\eta}
		\chi^{(|\gamma+\eta|)}.
		\end{align*}
		Therefore,
		\begin{align*}
		& D_y^\gamma D_z^\delta D_{\bar{y}}^\xi D_{\bar{z}}^\eta \left(\chi((\alpha+\beta)^{2(a-1)}4^{a-1}C_1^2\left|y-\bar{z}\right|^2)\right)
		\\
		=&\sum_{\xi_0\leq \xi,\delta_0\leq \delta}\binom{\xi}{\xi_0}\binom{\delta}{\delta_0}(-1)^{|\eta+\xi_0+\delta-\delta_0|}\left(|\alpha+\beta|^{2(a-1)}4^{a-1}C_1^2\right)^{|\gamma+\eta+\xi_0+\delta_0|}
		\\&
		\frac{(\gamma+\eta)!}{(\gamma+\eta+\xi_0+\delta_0-\xi-\delta)!}
		(\bar{y}-z)^{\gamma+\eta-(\xi-\xi_0)-(\delta-\delta_0)}
		(\bar{z}-y)^{\xi_0+\delta_0}
		\chi^{(|\gamma+\eta+\xi_0+\delta_0|)}.
		\end{align*}
		Since the cut-off function $\chi$ is in $G^{1+\eps}(R)$, there exists a positive constant $C=C(\chi)$, which may vary from line to line, such that
		\begin{align*}
		&\left| D_y^\gamma D_z^\delta D_{\bar{y}}^\xi D_{\bar{z}}^\eta \left(\chi(|\alpha+\beta|^{2(a-1)}4^{a-1}C_1^2\left|y-\bar{z}\right|^2)\right)\right|
		\\
		\leq&\sum_{\xi_0\leq \xi,\delta_0\leq \delta}\binom{\xi}{\xi_0}\binom{\delta}{\delta_0}\left(|\alpha+\beta|^{2(a-1)}4^{a-1}C_1^2\right)^{|\gamma+\eta+\xi_0+\delta_0|}C^{|\gamma+\eta+\xi_0+\delta_0|}
		\\&
		\frac{(\gamma+\eta)!}{(\gamma+\eta+\xi_0+\delta_0-\xi-\delta)!}
		\left|\bar{y}-z\right|^{|\gamma+\eta-\xi+2\xi_0-\delta+2\delta_0|}
		|\gamma+\eta+\xi_0+\delta_0|!^{1+\eps}
		\\
		\leq&C^{|\gamma+\delta+\xi+\eta|}(\gamma+\eta+\xi+\delta)!^{1+\eps}\\
		&\sum_{\xi_0\leq \xi,\delta_0\leq \delta}\binom{\xi}{\xi_0}\binom{\delta}{\delta_0}\left(|\alpha+\beta|^{2(a-1)}4^{a-1}C_1^2\right)^{|\gamma+\eta+\xi_0+\delta_0|}
		\left|\bar{y}-z\right|^{|\gamma+\eta-\xi+2\xi_0-\delta+2\delta_0|}.
		\end{align*}
		Our result follows by using that for any $y,z\in\mathbb{C}^n$, $$|\alpha+\beta|^{2(a-1)}4^{a-1}C_1^2\left|y-\bar{z}\right|^2\leq 1.$$
	\end{proof}
	
	We now estimate the derivatives on $F(f)$. By a straightforward calculation, we have
	\begin{align*}
	& D_y^\gamma D_z^\delta  D_{\bar{y}}^{\xi}  D_{\bar{z}}^{\eta} F(f)(y,z)
	\\
	=&\sum_{\substack{\gamma_0+\gamma_1+\gamma_2=\gamma\\\delta_0+\delta_1+\delta_2=\delta}}
	\sum_{\substack{\xi_0+\xi_1+\xi_2=\xi\\\eta_0+\eta_1+\eta_2=\eta}}
	\sum_{\substack{\alpha\geq \gamma_1+\eta_1,\\ \beta\geq \xi_1+\delta_1}}
	\binom{\gamma}{\gamma_0,\gamma_1,\gamma_2}
	\binom{\delta}{\delta_0,\delta_1,\delta_2}
	\binom{\xi}{\xi_0,\xi_1,\xi_2}
	\binom{\eta}{\eta_0,\eta_1,\eta_2}
	\\&
	\cdot \left(\frac{1}{2}\right)^{|\gamma_0+\gamma_1+\delta_0+\delta_1+\xi_0+\xi_1+\eta_0+\eta_1|}
	(-1)^{|\xi_1+\eta_1|}
	\frac{ D_x^{\alpha+\gamma_0+\eta_0} D_{\bar{x}}^{\beta+\delta_0+\xi_0} f}{(\alpha-\gamma_1-\eta_1)!(\beta-\delta_1-\xi_1)!}  
	\\
	&
	\cdot
	\left(\frac{y-\bar{z}}{2}\right)^{\alpha-\gamma_1-\eta_1} \left(\frac{z-\bar{y}}{2}\right)^{\beta-\delta_1-\xi_1}
	D_y^{\gamma_2} D_z^{\delta_2} D_{\bar{y}}^{\xi_2}  D_{\bar{z}}^{\eta_2} \left(\chi\left(|\alpha+\beta|^{2(a-1)}4^{a-1}C_1^2\left|y-\bar{z}\right|^2\right)\right).
	\end{align*}
	Let $C_0=C_0(f)$ introduced in Definition \ref{Gevrey}. We use $C$ to denote a constant which depends on $a$, $C_1(f)$ introduced in Definition \ref{Gevrey} and the cut-off function $\chi$, which may vary from line to line. By the fact that $f\in G^a(U)$ and the previous lemma, it follows that
	\begin{align*}
	&\left| D_y^\gamma D_z^\delta  D_{\bar{y}}^{\xi}  D_{\bar{z}}^{\eta} F(f)(y,z)\right|
	\\
	&\leq 
	C_0C^{|\gamma+\delta+\xi+\eta|}
	\sum_{\substack{\gamma_0+\gamma_1+\gamma_2=\gamma\\\delta_0+\delta_1+\delta_2=\delta}}
	\sum_{\substack{\xi_0+\xi_1+\xi_2=\xi\\\eta_0+\eta_1+\eta_2=\eta}}
	\sum_{\substack{\alpha\geq \gamma_1+\eta_1\\\beta\geq \xi_1+\delta_1}}
	\binom{\gamma}{\gamma_0,\gamma_1,\gamma_2}
	\binom{\delta}{\delta_0,\delta_1,\delta_2}
	\binom{\xi}{\xi_0,\xi_1,\xi_2}
	\binom{\eta}{\eta_0,\eta_1,\eta_2}
	\\
	& \qquad\cdot  C_1^{|\alpha+\beta|}\frac{(\alpha+\gamma_0+\eta_0)!^a(\beta+\delta_0+\xi_0)!^a}{(\alpha-\gamma_1-\eta_1)!(\beta-\delta_1-\xi_1)!}\left|\frac{y-\bar{z}}{2}\right|^{|\alpha-\gamma_1-\eta_1+\beta-\delta_1-\xi_1|} 
	\\
	&\qquad \cdot
	(\gamma_2+\eta_2+\xi_2+\delta_2)!^{1+\eps}
	\left(2^{a-1}|\alpha+\beta|^{a-1}C_1\right)^{|\gamma_2+\eta_2+\xi_2+\delta_2|}.
	\end{align*}
	Using the fact $|\alpha+\beta|^{2(a-1)}4^{a-1}C_1^2\left|y-\bar{z}\right|^2\leq 1$ and Stirling's formula, it is bounded by
	\begin{align*}
	&
	C_0C^{|\gamma+\delta+\xi+\eta|}
	\sum_{\substack{\gamma_0+\gamma_1+\gamma_2=\gamma\\\delta_0+\delta_1+\delta_2=\delta}}
	\sum_{\substack{\xi_0+\xi_1+\xi_2=\xi\\\eta_0+\eta_1+\eta_2=\eta}}
	\sum_{\substack{\alpha\geq \gamma_1+\eta_1\\\beta\geq \xi_1+\delta_1}}
	\binom{\gamma}{\gamma_0,\gamma_1,\gamma_2}
	\binom{\delta}{\delta_0,\delta_1,\delta_2}
	\binom{\xi}{\xi_0,\xi_1,\xi_2}
	\binom{\eta}{\eta_0,\eta_1,\eta_2}
	\\
	&\cdot \frac{(\alpha+\gamma_0+\eta_0)!^a(\beta+\delta_0+\xi_0)!^a}{(\alpha-\gamma_1-\eta_1)!(\beta-\delta_1-\xi_1)!}\left(\frac{C_1}{2}\right)^{|\alpha+\beta|} (\gamma_2+\eta_2+\xi_2+\delta_2)!^{1+\eps}
	\\
	&\cdot
	\left(2^{a-1}|\alpha+\beta|^{a-1}C_1\right)^{|\gamma_2+\eta_2+\xi_2+\delta_2+\gamma_1+\delta_1+\xi_1+\eta_1|-|\alpha+\beta|}
	\\
	\leq&
	C_0C^{|\gamma+\delta+\xi+\eta|}
	\sum_{\substack{\gamma_0+\gamma_1+\gamma_2=\gamma\\\delta_0+\delta_1+\delta_2=\delta}}
	\sum_{\substack{\xi_0+\xi_1+\xi_2=\xi\\\eta_0+\eta_1+\eta_2=\eta}}
	\sum_{\substack{\alpha\geq \gamma_1+\eta_1\\\beta\geq \xi_1+\delta_1}}
	\binom{\gamma}{\gamma_0,\gamma_1,\gamma_2}
	\binom{\delta}{\delta_0,\delta_1,\delta_2}
	\binom{\xi}{\xi_0,\xi_1,\xi_2}
	\binom{\eta}{\eta_0,\eta_1,\eta_2}
	\\
	&\cdot
	\frac{(\alpha+\gamma_0+\eta_0)!^a(\beta+\delta_0+\xi_0)!^a}{(\alpha-\gamma_1-\eta_1)!(\beta-\delta_1-\xi_1)!}2^{-a(|\alpha+\beta|)} (\gamma_2+\eta_2+\xi_2+\delta_2)!^{1+\eps}
	\\
	&
	\cdot
	|\alpha+\beta|^{(a-1)|\gamma_2+\eta_2+\xi_2+\delta_2+\gamma_1+\delta_1+\xi_1+\eta_1|-(a-1)|\alpha+\beta|}
	\\
	\leq&
	C_0C^{|\gamma+\delta+\xi+\eta|}
	\sum_{\alpha,\beta\geq 0}\sum_{\substack{\gamma_0+\gamma_1+\gamma_2=\gamma\\\delta_0+\delta_1+\delta_2=\delta}}
	\sum_{\substack{\xi_0+\xi_1+\xi_2=\xi\\\eta_0+\eta_1+\eta_2=\eta}}
	\binom{\gamma}{\gamma_0,\gamma_1,\gamma_2}
	\binom{\delta}{\delta_0,\delta_1,\delta_2}
	\binom{\xi}{\xi_0,\xi_1,\xi_2}
	\binom{\eta}{\eta_0,\eta_1,\eta_2}
	\\&\cdot
	(\gamma_0+\eta_0)!^a(\delta_0+\xi_0)!^a(\gamma_1+\eta_1)!(\delta_1+\xi_1)! (\gamma_2+\eta_2+\xi_2+\delta_2)!^{1+\eps}
	\\&
	\cdot
	e^{-(a-1)|\alpha+\beta|}
	|\alpha+\beta|^{(a-1)(|\gamma_2+\eta_2+\xi_2+\delta_2+\gamma_1+\delta_1+\xi_1+\eta_1|+1)}
	\end{align*}
	
	For any $\alpha,\beta\geq 0$, we have
	\begin{align*}
	|\alpha+\beta|&^{(a-1)(|\gamma_1+\delta_1+\xi_1+\eta_1+\gamma_2+\delta_2+\xi_2+\eta_2|+1)}e^{-\frac{1}{2}(a-1)|\alpha+\beta|}
	\\
	&\leq 2^{(a-1)(|\gamma_1+\delta_1+\xi_1+\eta_1+\gamma_2+\delta_2+\xi_2+\eta_2|+1)}(|\gamma_1+\delta_1+\xi_1+\eta_1+\gamma_2+\delta_2+\xi_2+\eta_2|+1)!^{a-1}.
	\end{align*}
	Therefore,
	\begin{align*}
	| D_y^\gamma D_z^\delta D_{\bar{y}}^\xi D_{\bar{z}}^{\eta} F(f)(y,z)|
	\leq 2^{a-1}C_0C^{|\gamma+\delta+\xi+\eta|}(\gamma!\delta!\xi!\eta!)^{a+\eps}
	\sum_{\alpha,\beta\geq 0}
	e^{-\frac{1}{2}(a-1)|\alpha+\beta|}.
	\end{align*}
	Note that when $\xi+\eta>0$, we have \eqref{IndexRange}. So
	\begin{align*}
	| D_y^\gamma D_z^\delta D_{\bar{y}}^\xi D_{\bar{z}}^{\eta} F(f)(y,z)|
	\leq 2^{a-1}C_0C^{|\gamma+\delta+\xi+\eta|}(\gamma!\delta!\xi!\eta!)^{a+\eps}
	e^{-\frac{a-1}{8}(\sqrt{2}C_1|y-\bar{z}|)^{-\frac{1}{a-1}}}
	\sum_{\alpha,\beta\geq 0}
	e^{-\frac{1}{4}(a-1)|\alpha+\beta|}.
	\end{align*}
	The result follows as $\sum_{\alpha,\beta\geq 0}
	e^{-\frac{1}{4}(a-1)|\alpha+\beta|}\leq \left(\frac{4}{a-1}\right)^{2n}e^{\frac{(a-1)n}{2}}$.
	
	In addition, when $z=\bar{y}$, note all the derivatives of $\chi$ vanish and $|\chi|\leq 1$, whence we can replace $\eps$ by zero.
\end{proof}

\begin{proof}[Proof of Lemma \ref{AHE CLOSED}]
	
	It is easy to see that $\mathcal{A}^{a,\eps}_\theta$ is closed under summation, subtraction and differentiation. Now we consider multiplication. Take $f,g \in \mathcal{A}^{a,\eps}_{\theta}$. We will use $C_0(f),C_1(f),b(f)$ and $C_0(g),C_1(g),b(g)$ to denote the constants in \eqref{eq 4.13} corresponding to $f,g$ respectively. Take $C_1=\max\{C_1(f), C_1(g)\}$ and $b=\min\{b(f),b(g)\}$.
	Let $v'=(x,y,\theta)$. Then
	\begin{align*}
	&\left|D^{\alpha}_{v'}D^{\beta}_{\bar{v}'}(fg)(x,y,\theta(x,y,z))\right|
	\\
	\leq& \sum_{\alpha_0\leq \alpha,\beta_0\leq \beta} \binom{\alpha}{\alpha_0}\binom{\beta}{\beta_0} \left|D^{\alpha_0}_{v'}D^{\beta_0}_{\bar{v}'}f(x,y,\theta(x,y,z))\right| \left|D^{\alpha-\alpha_0}_{v}D^{\beta-\beta_0}_{\bar{v}'}g(x,y,\theta(x,y,z))\right|
	\\
	\leq &
	\sum_{\alpha_0\leq \alpha,\beta_0\leq \beta}C_0(f)C_0(g) \binom{\alpha}{\alpha_0}\binom{\beta}{\beta_0} C_1^{|\alpha+\beta|}
	(\alpha!\beta!)^{a+\eps}
	\lambda_{b,|\beta|}(x,y,z)
	\\
	= &
	C_0(f)C_0(g)(2C_1)^{|\alpha+\beta|}
	(\alpha!\beta!)^{a+\eps}
	\lambda_{b,|\beta|}(x,y,z).
	\end{align*}
	In addition, when we are restricted to $x=y=\bar{z}$, it is easy to see that we can replace $\eps$ by $0$.
	Therefore, $fg\in \mathcal{A}^{a,\eps}_{\theta}$. And we can choose $C_0(fg)=C_0(f)C_0(g)$, $C_1(fg)=2\max\{C_1(f),C_1(g)\}$ and $b(fg)=\min\{b(f),b(g)\}$.
	
	To prove $\mathcal{A}^{a,\eps}_{\theta}$ is closed under division, we will verify that if $f\in \mathcal{A}^{a,\eps}_{\theta}(U')$ and $\inf_{U'}|f|\geq C_2>0$, then the reciprocal $\frac{1}{f}\in \mathcal{A}^{a,\eps}_{\theta}(U')$. Define $h(w)=\frac{1}{w}$ for
	$w\in \mathbb{C}\setminus\{0\}$.  Then for any $v_0\in U'$, we have
	\begin{align*}
	D_{v'}^\alpha D_{\bar{v}'}^{\beta}(h\circ f)(v_0)=\alpha!\beta!\sum_{k=0}^{|\alpha+\beta|} \frac{(-1)^k}{f(v_0)^{k+1}}\sum_{\substack{\alpha_1+\alpha_2+\cdots+\alpha_k=\alpha\\\beta_1+\beta_2+\cdots+\beta_k=\beta\\\alpha_1+\beta_1>0,\cdots,\alpha_k+\beta_k>0}}\frac{D_{v'}^{\alpha_1}D_{\bar{v}'}^{\beta_1}f(v_0)}{\alpha_1!\beta_1!}\cdots \frac{D_{v'}^{\alpha_k}D_{\bar{v}'}^{\beta_k}f(v_0)}{\alpha_k!\beta_k!}.
	\end{align*}      
	We use $C_0=C_0(f), C_1=C_1(f)$ and $b=b(f)$ to denote the constants in \eqref{eq 4.13} for $f$. Without losing of generality, we can assume $C_0>1$ and $C_2<1$. Then
	\begin{align*}
	\Big|D_{v'}^\alpha D_{\bar{v}'}^{\beta}(h\circ f)(x,y,&\theta(x,y,z))\Big|
	\\
	\leq &\sum_{k=0}^{|\alpha+\beta|} \frac{1}{C_2^{k+1}}\sum_{\substack{\alpha_1+\alpha_2+\cdots+\alpha_k=\alpha\\\beta_1+\beta_2+\cdots+\beta_k=\beta\\\alpha_1+\beta_1>0,\cdots,\alpha_k+\beta_k>0}}
	(\alpha!\beta!)^{a+\eps}C_0^{k}C_1^{|\alpha+\beta|}\lambda_{b,|\beta|}(x,y,z)
	\\
	\leq &\sum_{k=0}^{|\alpha+\beta|}\binom{\alpha+k\mathbbm{1}}{k\mathbbm{1}} \binom{\beta+k\mathbbm{1}}{k\mathbbm{1}}
	\frac{1}{C_2}\left(\frac{C_0C_1}{C_2}\right)^{|\alpha+\beta|}(\alpha!\beta!)^{a+\eps}\lambda_{b,|\beta|}(x,y,z)
	\\
	\leq &\frac{1}{C_2} \left(\frac{2^{6n+2}C_0C_1}{C_2}\right)^{|\alpha+\beta|}(\alpha!\beta!)^{a+\eps}\lambda_{b,|\beta|}(x,y,z).
	\end{align*}
	In addition, when we are restricted to $x=y=\bar{z}$, it is easy to see that we can replace $\eps$ by $0$.
	Therefore, $\frac{1}{f}=h\circ f\in \mathcal{A}^{a,\eps}_{\theta}(U')$.
\end{proof}

\begin{proof}[Proof of Lemma \ref{AHE Integral}]
	Denote $v=(x,y,z)$ and $C_0=C_0(f), C_1=C_1(f), b=b(f)$. As $t\in [0,1]$, we have
	\begin{align*}
	\left|D_v^\alpha D_{\bar{v}}^{\beta} g(x,y,z)\right|
	\leq& \max_{t\in [0,1]} \left|D_v^\alpha D_{\bar{v}}^{\beta} \big(f(x,tx+(1-t)y,z)\big)\right|
	\end{align*}
	We write $D_v^\alpha=D_x^{\alpha_1}D_y^{\alpha_2}D_z^{\alpha_3}$ and $D_{\bar{v}}^\beta=D_{\bar{x}}^{\beta_1}D_{\bar{y}}^{\beta_2}D_{\bar{z}}^{\beta_3}$. Then
	\begin{align*}
	\Big|D_v^\alpha D_{\bar{v}}^{\beta} &g(x,y,z)\Big|
	\\
	\leq& \max_{t\in [0,1]} \left|\sum_{\alpha'_1\leq\alpha_1}\sum_{\alpha'_1\leq\alpha_1} \binom{\alpha_1}{\alpha_1'}\binom{\beta_1}{\beta_1'}
	D_x^{\alpha_1'}D_y^{\alpha_2+\alpha_1-\alpha_1'}D_z^{\alpha_3}
	D_{\bar{x}}^{\beta_1'}D_{\bar{y}}^{\beta_2+\beta_1-\beta_1'}D_{\bar{z}}^{\beta_3}
	f(x,tx+(1-t)y,z)\right|
	\\
	\leq&  
	\sum_{\alpha'_1\leq\alpha_1}\sum_{\alpha'_1\leq\alpha_1} \binom{\alpha_1}{\alpha_1'}\binom{\beta_1}{\beta_1'}
	C_0\left(2^{a+\eps}C_1\right)^{|\alpha+\beta|}\alpha!^{a+\eps}\beta!^{a+\eps}
	\max_{t\in [0,1]}\lambda_{b,|\beta|}(x,tx+(1-t)y,z) 
	\\
	\leq& C_0\left(2^{a+\eps+1}C_1\right)^{|\alpha+\beta|}\alpha!^{a+\eps}\beta!^{a+\eps}\lambda_{b,|\beta|}(x,y,z). 
	\end{align*}
	The last inequality follows from $|tx+(1-t)y-\bar{z}|\leq \max\{|x-\bar{z}|,|y-\bar{z}|\}$ for any $t\in [0,1]$. In addition, as $f\in \mathcal{A}^{a,\eps}_\theta$, when restricted to $x=y=\bar{z}$, we can replace $\eps$ by $0$. So we get the first part of the lemma. The second part follows by the same argument.
\end{proof}

\begin{proof}[Proof of Lemma \ref{AHE Composition}]
	Let $m=3n$. We denote $v=(x,y,z)$, $v'=(x,y,\theta)$.
	By a straightforward calculation, we have
	\begin{align*}
	\frac{D_{v'}^\alpha D_{\bar{v}'}^\beta}{\alpha!\beta!}&\widetilde{f}
	\\
	=&
	\sum_{0\leq |\xi+\eta|\leq |\alpha+\beta|}
	\frac{D_{v}^\xi D_{\bar{v}}^\eta f}{\xi!\eta!}
	\sum_{A_{\alpha\beta\xi\eta}} \frac{D_{v'}^{\alpha_{11}}D_{\bar{v}'}^{\beta_{11}}v_1}{\alpha_{11}!\beta_{11}!}
	\frac{D_{v'}^{\alpha'_{11}}D_{\bar{v}'}^{\beta'_{11}}\oo{v_1}}{\alpha'_{11}!\beta'_{11}!}
	\cdots
	\frac{D_{v'}^{\alpha_{m\xi_m}}D_{\bar{v}'}^{\beta_{m\xi_m}}v_m}{\alpha_{m\xi_m}!\beta_{m\xi_m}!}
	\frac{D_{v'}^{\alpha'_{m\eta_m}}D_{\bar{v}'}^{\beta'_{m\eta_m}}\oo{v_m}}{\alpha'_{m\eta_m}!\beta'_{m\eta_m}!},
	\end{align*}
	where $A_{\alpha\beta\xi\eta}$ is defined in \eqref{Index Set}.
	Then since $f\in \mathcal{A}^{a,\eps}_z$ and $v=(x,y,z(x,y,\theta))\in \mathcal{A}^{a,\eps}_\theta$, by taking $b=\min\{b(f),b(v)\}$, $C_0(v)=\max_{i}C_0(v_i)$ and $C_1(v)=\max_{i}C_1(v_i)$, we have
	\begin{align*}
	&\left|\frac{D_{v'}^\alpha D_{\bar{v}'}^\beta}{\alpha!\beta!}\widetilde{f}(x,y,\theta(x,y,z))\right|
	\\
	\leq& 
	\sum_{0\leq |\xi+\eta|\leq |\alpha+\beta|}
	C_0(f)C_1(f)^{|\xi+\eta|}
	\sum_{A_{\alpha\beta\xi\eta}}C_0(v)^{|\xi+\eta|}C_1(v)^{|\alpha+\beta|}\left(\xi!\eta!\prod_{i,j}\alpha_{ij}!\beta_{ij}!\prod_{ik}\alpha'_{ik}!\beta'_{ik}!\right)^{a-1+\eps}\lambda_{b,|\beta|}(x,y,z).
	\end{align*}
	
	Now we prove two combinatorial lemmas to estimate of $\xi!\eta!\prod_{i,j}\alpha_{ij}!\beta_{ij}!\prod_{i,k}\alpha'_{ik}!\beta'_{ik}!$ appearing in the above inequality.
	\begin{lemma}\label{lem 4.6}
		For any integers $k,i_1,i_2\cdots i_k\in \mathbb{Z}^+$, we have
		\begin{equation}
		k!i_1!i_2!\cdots i_k!\leq (i_1+i_2+\cdots+i_k)!.
		\end{equation}
	\end{lemma}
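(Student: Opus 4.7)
The plan is to prove Lemma \ref{lem 4.6} by induction on $k$, using the multinomial coefficient interpretation of the claim. Rewriting the desired inequality as
\[
\binom{i_1+i_2+\cdots+i_k}{i_1,i_2,\ldots,i_k}=\frac{(i_1+\cdots+i_k)!}{i_1!\,i_2!\cdots i_k!}\geq k!,
\]
it suffices to show that the multinomial coefficient on the left is bounded below by $k!$ whenever every $i_j\geq 1$.

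For the base case $k=1$, the claim reduces to $1\cdot i_1!\leq i_1!$, which holds trivially. For the inductive step, suppose the statement is known for $k-1$, so that $(k-1)!\,i_1!\cdots i_{k-1}!\leq(i_1+\cdots+i_{k-1})!$. Multiplying by $k\cdot i_k!$ gives
\[
k!\,i_1!\cdots i_k!\;\leq\;k\cdot(i_1+\cdots+i_{k-1})!\cdot i_k!,
\]
so the inductive step will be complete once we establish
\[
k\cdot(i_1+\cdots+i_{k-1})!\cdot i_k!\;\leq\;(i_1+\cdots+i_k)!,
\]
which is equivalent to $\binom{i_1+\cdots+i_k}{i_k}\geq k$. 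Setting $m=i_1+\cdots+i_{k-1}$, the hypothesis that each $i_j\geq 1$ forces $m\geq k-1$ and $i_k\geq 1$. Hence
\[
\binom{m+i_k}{i_k}\geq\binom{m+1}{1}=m+1\geq k,
\]
which finishes the induction.

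The argument is elementary and I do not expect any genuine obstacle; the only choice to be made is how to split off the factor of $k$ from $k!$ in the inductive step so that the remaining combinatorial inequality reduces cleanly to the trivial bound $\binom{m+i_k}{i_k}\geq m+1$ valid for $i_k\geq 1$. A purely combinatorial proof (injecting $k!$ orderings of block labels into the set of ordered set-partitions of $\{1,\ldots,n\}$ into blocks of sizes $i_1,\ldots,i_k$) is also possible, but the inductive route is shorter and requires no bookkeeping of repeated block sizes.
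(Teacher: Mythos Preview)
Your proof is correct and follows essentially the same inductive argument as the paper: both induct on $k$, reduce the inductive step to the inequality $\binom{i_1+\cdots+i_k}{i_k}\geq k$, and verify it via the elementary lower bound on the binomial coefficient (the paper uses $\binom{i}{i_k}\geq i\geq k$, you use $\binom{m+i_k}{i_k}\geq m+1\geq k$, which are equivalent observations).
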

	\begin{proof}
		We will do induction on $k$. When $k=1$, the result follows trivially. Assume it is true for $k-1$ and we proceed to the case $k$. For simplicity, we denote $i=i_1+i_2+\cdots+i_k$. Then by using the result by induction, we have
		\begin{equation}
		k!i_1!i_2!\cdots i_k!\leq k(i_1+i_2+\cdots+i_{k-1})!i_k!=\frac{k}{\binom{i}{i_k}}i!.
		\end{equation}
		Note that $\binom{i}{i_k}\geq i\geq k$ and thus the result follows.
	\end{proof}

	\begin{lemma}\label{Combinatoric 2}
		For any multi-indices, $\alpha_1,\alpha_2,\cdots, \alpha_k\in (\mathbb{Z}^{\geq 0})^n$, if $|\alpha_i|>0$ for each $1\leq i\leq k$, then we have
		\begin{equation}
		\alpha_1!\alpha_2!\cdots\alpha_k!\leq \frac{(\alpha_1+\alpha_2+\cdots+\alpha_k)!}{k!}n^k.
		\end{equation}
	\end{lemma}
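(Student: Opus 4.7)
The plan is to reduce the vector inequality to a product of one-dimensional inequalities of the form proved in Lemma \ref{lem 4.6}, with a bookkeeping trick to pay for the factor $k!/n^k$. Write $\alpha_i = (\alpha_{i,1}, \ldots, \alpha_{i,n})$, set $S = \alpha_1 + \cdots + \alpha_k$ and $S_j = \alpha_{1,j} + \cdots + \alpha_{k,j}$, so that $\alpha_i! = \prod_j \alpha_{i,j}!$ and $S! = \prod_j S_j!$.

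First, since $|\alpha_i| > 0$ for every $i$, pick for each $i \in \{1,\ldots,k\}$ some coordinate $j_i \in \{1,\ldots,n\}$ with $\alpha_{i,j_i} > 0$, and form the partition $\{1,\ldots,k\} = I_1 \sqcup \cdots \sqcup I_n$ where $I_j = \{i : j_i = j\}$. For each fixed $j$, all entries $\alpha_{i,j}$ with $i \in I_j$ are strictly positive, so Lemma \ref{lem 4.6} (applied to the positive integers $\{\alpha_{i,j}\}_{i \in I_j}$, possibly padded with the remaining positive $\alpha_{i,j}$ for $i \notin I_j$) gives
\begin{equation*}
|I_j|! \prod_{i=1}^{k} \alpha_{i,j}! \leq \Bigl(\sum_{i=1}^{k} \alpha_{i,j}\Bigr)! = S_j!,
\end{equation*}
using that the vanishing $\alpha_{i,j} = 0$ contribute trivial factors $0!=1$ and do not affect the bound. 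Hence $\prod_i \alpha_{i,j}! \leq S_j!/|I_j|!$ for every $j$.

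Multiplying these inequalities over $j = 1, \ldots, n$ yields
\begin{equation*}
\prod_{i=1}^{k} \alpha_i! = \prod_{j=1}^{n} \prod_{i=1}^{k} \alpha_{i,j}! \leq \prod_{j=1}^{n} \frac{S_j!}{|I_j|!} = \frac{S!}{\prod_{j=1}^{n} |I_j|!}.
\end{equation*}
Finally, because $|I_1| + \cdots + |I_n| = k$, the multinomial coefficient $\binom{k}{|I_1|,\ldots,|I_n|} = k!/\prod_j |I_j|!$ is a single term in the expansion of $(1+1+\cdots+1)^k = n^k$, so it is bounded by $n^k$. This gives $1/\prod_j |I_j|! \leq n^k/k!$, and combining with the previous display produces exactly
\begin{equation*}
\alpha_1! \alpha_2! \cdots \alpha_k! \leq \frac{(\alpha_1 + \cdots + \alpha_k)!}{k!} n^k,
\end{equation*}
as desired. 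No step is a real obstacle; the only subtle point is the choice of the auxiliary partition $\{I_j\}$, which is what allows us to invoke Lemma \ref{lem 4.6} coordinate-by-coordinate and simultaneously extract the factor $k!/n^k$ from the multinomial identity.
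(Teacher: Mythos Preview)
Your proof is correct and follows essentially the same approach as the paper: apply Lemma \ref{lem 4.6} coordinate by coordinate and then bound a multinomial coefficient by $n^k$. The only difference is bookkeeping: the paper uses $l_j=\#\{i:\alpha_{i,j}>0\}$, obtains $\prod_j l_j!$ in the denominator, and then reduces to $k_j\leq l_j$ with $\sum k_j=k$; you instead choose one witness coordinate $j_i$ per index to form an exact partition $\{I_j\}$ with $\sum|I_j|=k$, which lets you invoke the multinomial bound directly and skips that reduction step.
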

	\begin{proof}
		We denote $\alpha_i=(\alpha_{i1},\alpha_{i2},\cdots,\alpha_{in})$ and define $l_j=\#\{1\leq i\leq k, \alpha_{ij}\neq 0\}$ for $1\leq j\leq n$. Then by applying Lemma \ref{lem 4.6} to the $j$-th component of each $\alpha_i$ for $1\leq i\leq k$, we have
		\begin{align*}
		\alpha_{1j}!\alpha_{2j}!\cdots\alpha_{kj}!\leq \frac{(\alpha_{1j}+\alpha_{2j}+\cdots+\alpha_{kj})!}{l_j!}.
		\end{align*}
		Therefore,
		\begin{align}\label{eq 4.16}
		\alpha_1!\alpha_2!\cdots\alpha_k!\leq \frac{(\alpha_1+\alpha_2+\cdots+\alpha_k)!}{l_1!l_2!\cdots l_n!}.
		\end{align}
		Since $|\alpha_i|\neq 0$ for each $i$, we have $l:=l_1+l_2+\cdots l_n\geq k$. Then we can find nonnegative integers $k_1,k_2,\cdots,k_n$ such that $k_1+k_2+\cdots+k_n=k$ and $k_j\leq l_j$ for $1\leq j\leq n$. Therefore
		\begin{equation}
		l_1!l_2!\cdots l_n!\geq k_1!k_2!\cdots k_n!=\frac{k!}{\binom{k}{k_1,k_2,\cdots,k_n}}\geq \frac{k!}{n^k}.
		\end{equation}
		Plug this back into \eqref{eq 4.16}, we have the result.
	\end{proof}

	By using Lemma \ref{Combinatoric 2}, we have the upper bound for the factorials on the right hand side as
	\begin{align*}
	\xi!\eta!\prod_{i,j}\alpha_{ij}!\beta_{ij}!\prod_{ik}\alpha'_{ik}!\beta'_{ik}!
	\leq m^{|\xi+\eta|}\left(\sum_{ij}\alpha_{ij}+\beta_{ij}\right)! \left(\sum_{ik}\alpha'_{ik}+\beta'_{ik}\right)!
	\leq m^{|\xi+\eta|}(\alpha+\beta)!.
	\end{align*}
	
	Therefore, 
	\begin{align*}
	&\left|\frac{D_{v'}^\alpha D_{\bar{v}'}^\beta}{\alpha!\beta!}\widetilde{f}(x,y,\theta(x,y,z))\right|
	\\
	&\quad\leq 
	\sum_{0\leq |\xi+\eta|\leq |\alpha+\beta|}
	C_0(f)C_1(f)^{|\xi+\eta|}
	\sum_{A_{\alpha\beta\xi\eta}}C_0(v)^{|\xi+\eta|}C_1(v)^{|\alpha+\beta|}\left(m^{|\xi+\eta|}(\alpha+\beta)!\right)^{a-1+\eps}\lambda_{b,|\beta|}(x,y,z)
	\\
	&\quad\leq 
	\sum_{0\leq |\xi+\eta|\leq |\alpha+\beta|}
	C_0(f)\left(m^{a-1+\eps}C_1(f)C_0(v)C_1(v)\right)^{|\alpha+\beta|}
	(\alpha+\beta)!^{a-1+\eps}\lambda_{b,|\beta|}(x,y,z)\cdot\#A_{\alpha\beta\xi\eta}.
	\end{align*}
	Note the cardinality of $A_{\alpha\beta\xi\eta}$ has the following upper bounded $$\#A_{\alpha\beta\xi\eta}\leq \binom{\alpha+(|\xi+\eta|)\mathbbm{1}}{|\xi+\eta|\mathbbm{1}}\binom{\beta+(|\xi+\eta|)\mathbbm{1}}{|\xi+\eta|\mathbbm{1}}.$$
	So we get 
	\begin{align*}
	&\left|\frac{D_{v'}^\alpha D_{\bar{v}'}^\beta}{\alpha!\beta!}\widetilde{f}(x,y,\theta(x,y,z))\right|
	\\
	&\quad\leq 
	\sum_{0\leq |\xi+\eta|\leq |\alpha+\beta|}
	C_0(f)\Big(2^{a+\eps+2m}m^{a-1+\eps}C_1(f)C_0(v)C_1(v)\Big)^{|\alpha+\beta|}
	(\alpha!\beta!)^{a-1+\eps}\lambda_{b,|\beta|}(x,y,z)
	\\
	&\quad\leq 
	C_0(f)\Big(2^{a+3m+\eps}m^{a-1+\eps}C_1(f)C_0(v)C_1(v)\Big)^{|\alpha+\beta|}
	(\alpha!\beta!)^{a-1+\eps}\lambda_{b,|\beta|}(x,y,z).
	\end{align*}
	And if we keep track of the constant $\eps$, it is easy to see that $\eps$ comes from derivatives of since $f$ and $v=(x,y,z(x,y,\theta))$. Since $f\in \mathcal{A}^{a,\eps}_z$ and $v=(x,y,z(x,y,\theta))\in \mathcal{A}^{a,\eps}_\theta$, we can replace $\eps$ by $0$ when restricted to $x=y=\bar{z}$. 
	Therefore, we can take $C_0(\widetilde{f})=C_0(f)$, $C_1(\widetilde{f})=2^{a+3m+\eps} m^{a-1+\eps}C_0(v)C_1(f)C_1(v)$.
\end{proof}

\begin{proof}[Proof of Lemma \ref{AHE Inverse}]
	We are going to prove the following more general lemma. Note that we can assume $\psi(y,z)=yz+O(|(y,z)|^4)$ by using the Bochner coordinates at $0$. Then the Lemma \ref{AHE Inverse} follows directly by taking $F(x,y,z,\theta)=\int_0^1(D_y\psi)(tx+(1-t)y,z)dt-\theta$ and $\theta(x,y,z)=\int_0^1(D_y\psi)(tx+(1-t)y,z)dt$. 
	\begin{lemma}
		Consider smooth maps $\theta(x,y,z)=(\theta_1(x,y,z),\theta_2(x,y,z),\cdots,\theta_n(x,y,z))$  and $F(x,y,z,\theta)=(F_1(x,y,z,\theta),F_2(x,y,z,\theta),\cdots,F_n(x,y,z,\theta))$ satisfying the system of equations $F(x,y,z,\theta(x,y,z))=0$. Assume that for any $x,y,z\in U=B^n(0,1)$ and multi-indices $\alpha,\beta\geq 0$, we have
		\begin{align}\label{F}
		\left|\left(D_{(x,y,z,\theta)}^\alpha D_{(\bar{x},\bar{y},\bar{z},\bar{\theta})}^{\beta}F\right)(x,y,z,\theta(x,y,z))\right|\leq C_0C_1^{|\alpha+\beta|}\alpha!^{a+\eps}\beta!^{a+\eps}\lambda_{b,|\beta|}(x,y,z),
		\end{align}
		where $C_0=C_0(F), C_1=C_1(F)$ and $b=b(F)$ are some positive constants. And $\eps$ can be replaced by $0$, when we are restricted to $x=y=\bar{z}$. 
		
		If the $2n\times2n$ matrix 
		$\begin{pmatrix}
		\frac{\partial F}{\partial z} & \frac{\partial F}{\partial \oo{z}}\\
		\frac{\partial \oo{F}}{\partial z} & \frac{\partial \oo{F}}{\partial \oo{z}}
		\end{pmatrix}$ is the identity matrix at $(x_0,y_0,z_0,\theta_0)$, then the implicit functions $z=z(x,y,\theta)$ near $(x_0,y_0,\theta_0)$ determined by the equation $F(x,y,z,\theta)=0$ belong to $\mathcal{A}^{a,\eps}_\theta$.
	\end{lemma}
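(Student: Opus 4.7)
The plan is first to invoke the standard smooth implicit function theorem to secure the existence and smoothness of $z = z(x,y,\theta)$ in a neighborhood of $(x_0,y_0,\theta_0)$: since the real Jacobian of the map $(z,\bar z)\mapsto (F,\bar F)$ equals the identity at the base point, it remains invertible (with operator norm close to $1$) in a small enough neighborhood. Having $z(x,y,\theta)$ as a smooth map, we then proceed by induction on $N:=|\alpha+\beta|$ to establish the bounds of Definition \ref{A_theta}.

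For the inductive step, I would differentiate both identities $F(x,y,z(x,y,\theta),\theta)=0$ and $\bar F(x,y,z(x,y,\theta),\theta)=0$ by applying $D^\alpha_{v'} D^\beta_{\bar v'}$ where $v'=(x,y,\theta)$. A Faà di Bruno expansion separates the result into two groups of terms. The top-order group collects the contributions where a single $D$ or $\bar D$ hits the $z$- or $\bar z$-slot of $F$ and carries the entire derivative $D^\alpha_{v'} D^\beta_{\bar v'} z$ or $D^\alpha_{v'} D^\beta_{\bar v'} \bar z$; this produces a linear system whose coefficient matrix is the Jacobian of the previous paragraph, hence invertible, so one can solve for the $N$-th order derivatives of $z$ and $\bar z$. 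The remaining terms are products of (i) a derivative of $F$ of order $\le N$, bounded by \eqref{F}, and (ii) strictly lower-order derivatives of $z$ and $\bar z$, bounded by the inductive hypothesis.

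To estimate the remainder, I would follow the combinatorial template already used in the proof of Lemma \ref{AHE Composition}: the products of derivatives of $z$ and $\bar z$ are organized by the index set $A_{\alpha\beta\xi\eta}$ of \eqref{Index Set}, and Lemma \ref{Combinatoric 2} is applied to convert the product of factorials $\prod \alpha_{ij}!\beta_{ij}!$ into $(\alpha+\beta)!/k!$ up to a factor $m^k$. Combining with the Gevrey bounds $C_0(F) C_1(F)^{|\alpha+\beta|}(\alpha!\beta!)^{a+\eps}$ on $F$ and $C_0(z)C_1(z)^{|\alpha+\beta|}(\alpha!\beta!)^{a+\eps}$ (inductive) on $z,\bar z$, one gets a bound of the right shape provided $C_0(z), C_1(z)$ are chosen large enough relative to $C_0(F), C_1(F)$, the number $m$, and the operator norm of the Jacobian inverse. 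The exponential factor $\lambda_{b,|\beta|}(x,y,z)$ propagates: when $|\beta|>0$, every term in the Faà di Bruno expansion contains at least one antiholomorphic differentiation, and this derivative either lands on $F$ (giving $\lambda_{b(F),|\beta|}$ by \eqref{F}) or on $\bar z$ (giving $\lambda_{b(z),|\beta|}$ by induction), so taking $b(z)=\min\{b(F),b(z^{<N})\}$ — and arranging that this common value does not shrink in the induction — pulls $\lambda_{b,|\beta|}$ uniformly out of the sum. On the diagonal $x=y=\bar z$ the $\bar z$-derivatives of $z$ vanish to infinite order and the hypothesis \eqref{F} improves to $\eps=0$, so the same argument delivers the sharper Gevrey exponent $a$ there.

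The main obstacle is closing the induction with constants $C_0(z),C_1(z),b(z)$ independent of $\alpha,\beta,N$. In particular, the Jacobian inversion multiplies every estimate by a uniform factor $\le (1-\eta)^{-1}$ (for $\eta$ small), and the Faà di Bruno sum contributes combinatorial factors like $\binom{\alpha+|\xi+\eta|\mathbbm 1}{|\xi+\eta|\mathbbm 1}$ which must be absorbed into $C_1(z)^{|\alpha+\beta|}$. One must therefore shrink the neighborhood sufficiently so that $\eta$ is small and then choose $C_1(z)$ large enough in terms of $C_1(F),m,n,\eta$, and $C_0(z)$ large enough relative to $C_0(F)$, to swallow these combinatorial constants; tracking this bookkeeping carefully — and checking that $b(z)$ can be fixed once and for all as $b(F)$ throughout the induction — is the only delicate point.
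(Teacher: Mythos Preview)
Your strategy --- differentiate the identity $F(x,y,z(x,y,\theta),\theta)=0$, isolate the top-order derivatives of $z,\bar z$ via the invertible Jacobian, and bound the Fa\`a di Bruno remainder by induction on $N=|\alpha+\beta|$ --- is a genuine alternative to what the paper does. The paper instead reduces first by induction on the \emph{dimension} $n$ to the scalar case (one equation $f=0$ solved for $z_n$), then expands $f$ and $z_n$ as formal Taylor series about a point on the graph, obtains an explicit recursion (their \eqref{eq 4.19}) for the Taylor coefficients $b_{\gamma\bar\delta}$ of $z_n$, normalizes, and dominates the whole recursion by a \emph{majorant} sequence $d_{\gamma\delta}$ whose generating function $d(u,v)$ satisfies a single algebraic equation. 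The real-analytic implicit function theorem applied to that equation gives $d_{\gamma\delta}\le C_3^{|\gamma+\delta|}$ for all $\gamma,\delta$ in one stroke.

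The point you flag as ``the only delicate point'' --- closing your induction with constants $C_0(z),C_1(z)$ independent of $N$ --- is exactly where the two methods diverge, and you have not actually resolved it. In your scheme the bound at order $N$ is a sum over Fa\`a di Bruno partitions of products of lower-order derivatives of $z$, each already carrying powers of $C_0(z),C_1(z)$; the number of inner factors can be as large as $N$, so the right-hand side scales a priori like $C_0(z)^{|\xi+\eta|}$ times combinatorial factors, and one must show this is still dominated by $C_0(z)C_1(z)^{N}(\alpha!\beta!)^{a+\eps}$ with the \emph{same} constants. The phrase ``choose $C_1(z)$ large enough'' does not by itself close this loop, because both sides of the inequality carry the same power $C_1(z)^{N}$. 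The paper's majorant trick sidesteps this circularity entirely: passing to generating functions converts the recursion into a single fixed-point equation, and analyticity of its solution delivers geometric bounds at all orders simultaneously. Your route can be made to work (it is the standard direct proof of the Gevrey implicit function theorem), but the missing bookkeeping is the substance of the argument, not a detail.
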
	
	
	We first consider a special case when $F$ is a function in the following lemma.
	\begin{lemma}
		Consider smooth maps $\theta(x,y,z)=(\theta_1(x,y,z),\theta_2(x,y,z),\cdots,\theta_n(x,y,z))$ and function $f(x,y,z,\theta)$ such that $f(x,y,z,\theta(x,y,z))=0$. And for any $x,y,z\in U$ and multi-indices $\alpha,\beta\geq 0$, we have
		\begin{align}\label{f}
		\left|\left(D_{(x,y,z,\theta)}^\alpha D_{(\bar{x},\bar{y},\bar{z},\bar{\theta})}^{\beta}f\right)(x,y,z,\theta(x,y,z))\right|\leq C_0C_1^{|\alpha+\beta|}\alpha!^{a+\eps}\beta!^{a+\eps}\lambda_{b,|\beta|}(x,y,z),
		\end{align}
		where $C_0, C_1$ and $b$ are some positive constants. And $\eps$ can be replace by $0$, when we are restricted to $x=y=\bar z$.
		
		Assume at $(x_0,y_0,z_0, \theta_0=\theta(x_0,y_0,z_0))$, the matrix 
		$\begin{pmatrix}
		\frac{\partial f}{\partial z_n} & \frac{\partial f}{\partial \oo{z_n}}\\
		\frac{\partial \oo{f}}{\partial z_n} & \frac{\partial \oo{f}}{\partial \oo{z_n}}
		\end{pmatrix}$ is non-singular. Then the implicit function  $z_n=z_n(x,y,z_1,z_2\cdots,z_{n-1},\theta)$ determined by the equation $f(x,y,z,\theta)=0$ satisfies that for any multi-indices $\alpha,\beta\geq 0$, 
		\begin{equation}\label{zn}
		\left|\left(D_{(x,y,z',\theta)}^\alpha D_{(\bar{x},\bar{y},\bar{z}',\bar{\theta})}^{\beta}z_n\right)(x,y,z',\theta(x,y,z))\right|\leq C'_0{C'_1}^{|\alpha+\beta|}\alpha!^{a+\eps}\beta!^{a+\eps}\lambda_{b',\beta}(x,y,z),
		\end{equation}
		where $C'_0,C'_1$ and $b'$ are some positive constants and $z'=(z_1,z_2,\cdots,z_{n-1})$. In addition, when we are restricted to $x=y=\bar z$, $\eps$ can be replace by $0$.
	\end{lemma}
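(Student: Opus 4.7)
The plan is to combine the classical implicit function theorem with a Fa\`a di Bruno--type combinatorial bookkeeping argument analogous to the proofs of Lemma \ref{AHE Composition} and Lemma \ref{AHE CLOSED}. First, since the $2\times 2$ matrix $\bigl(\begin{smallmatrix}\partial_{z_n}f & \partial_{\bar z_n}f\\ \partial_{\bar z_n}\bar f & \partial_{\bar z_n}\bar f\end{smallmatrix}\bigr)$ is non-singular at $(x_0,y_0,z_0,\theta_0)$, the implicit function theorem yields a smooth $z_n=z_n(x,y,z',\theta)$ on a neighborhood of $(x_0,y_0,z'_0,\theta_0)$, and the Jacobian stays uniformly invertible after possibly shrinking the neighborhood. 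Moreover, for convenience we may regard $f$ as taking values in $\mathbb R^2$ (identifying $\mathbb C\simeq \mathbb R^2$), so that we can apply the real implicit function theorem and differentiate with respect to $(v',\bar v')$ where $v'=(x,y,z',\theta)$.

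Next I would differentiate the defining identity $f(x,y,z',z_n(x,y,z',\theta),\theta)\equiv 0$ (together with its conjugate). For first derivatives this gives a $2\times 2$ linear system whose coefficient matrix is exactly the Jacobian above, and hence can be inverted to express $\partial_{v'_j}z_n$ and $\partial_{\bar v'_j}z_n$ as explicit combinations of first derivatives of $f$ divided by the determinant of the Jacobian. Since $\mathcal A^{a,\eps}_\theta$ is closed under multiplication, differentiation, and division by functions bounded away from zero (Lemma \ref{AHE CLOSED}), this handles the base step and, after shrinking the neighborhood, gives $z_n\in \mathcal A^{a,\eps}_\theta$ at the level of first derivatives with the correct $\lambda_{b,|\beta|}$ factor.

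For the inductive step, applying $D^\alpha_{v'} D^\beta_{\bar v'}$ to the identity and isolating the top-order terms yields a schematic equation
\begin{equation*}
\partial_{z_n}f\cdot D^\alpha_{v'}D^\beta_{\bar v'}z_n+\partial_{\bar z_n}f\cdot D^\alpha_{v'}D^\beta_{\bar v'}\bar z_n = -\mathcal F_{\alpha,\beta},
\end{equation*}
where $\mathcal F_{\alpha,\beta}$ is a Fa\`a di Bruno sum whose terms are products of $(D^\sigma_{w}D^\tau_{\bar w}f)(\cdots)$ with lower-order derivatives $D^{\mu}_{v'}D^{\nu}_{\bar v'}z_n$, $|\mu+\nu|<|\alpha+\beta|$. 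Inverting the $2\times 2$ Jacobian matrix again and using the inductive hypothesis \eqref{zn} for all strictly lower-order derivatives, the desired estimate reduces to bounding $\mathcal F_{\alpha,\beta}$. This is exactly the type of sum controlled in the proof of Lemma \ref{AHE Composition}: one invokes the combinatorial identity of Lemma \ref{Combinatoric 2} to bound products of factorials by a single factorial, counts the partitions in $A_{\alpha\beta\sigma\tau}$, and collects the geometric-series factors. By choosing $C'_1$ sufficiently larger than $C_1(f)$ and a fixed power of $2^{a+\eps}$, the inductive bound closes with $C'_0,C'_1,b'$ independent of $|\alpha+\beta|$.

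The main obstacle will be propagating the exponential factor $\lambda_{b,|\beta|}(x,y,z)$ through the induction. The key observation is that when $|\beta|>0$, every term of $\mathcal F_{\alpha,\beta}$ contains at least one anti-holomorphic derivative, which must land either on $f$ (producing $\lambda_{b,|\beta|}$ by hypothesis \eqref{f}) or on some lower-order $D^\mu_{v'}D^\nu_{\bar v'}z_n$ with $|\nu|>0$ (producing the exponential by induction), or it lands on an antiholomorphic argument in the chain rule--which once expanded also reduces to the first two cases. Because $\max\{|x-\bar z|,|y-\bar z|\}$ is controlled along the surface where $z_n$ is evaluated, we may take $b'\le b$ uniformly. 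A secondary technicality is verifying the sharpened statement that $\eps$ can be replaced by $0$ on the restriction to $x=y=\bar z$: this is transparent once one notes that every anti-holomorphic factor above vanishes identically on that diagonal, by our standing assumption on $f$, so only the holomorphic part of the Fa\`a di Bruno expansion survives and no $\eps$-loss is needed.
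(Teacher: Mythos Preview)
Your approach is viable but differs substantially from the paper's. You propose a direct induction on $|\alpha+\beta|$ via Fa\`a di Bruno; the paper instead expands both $f$ and the unknown $z_n$ in Taylor series about a base point, writes down the resulting recursion \eqref{eq 4.19} for the Taylor coefficients $b_{\gamma\bar\delta}$ of $z_n$, normalizes to $\widetilde b_{\gamma\delta}=|b_{\gamma\bar\delta}|\big/\big((\gamma!\delta!)^{a-1+\eps}\lambda_{b,|\delta|}\big)$, and then runs a Cauchy \emph{majorant} argument: it replaces the recursion by an explicit majorizing one whose generating function $d(u,v)=\sum d_{\gamma\delta}u^\gamma v^\delta$ satisfies a closed algebraic equation, and invokes the real-analytic implicit function theorem to conclude $d$ is analytic at the origin, hence $\widetilde b_{\gamma\delta}\le d_{\gamma\delta}\le C_3^{|\gamma+\delta|}$. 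This buys the paper a clean way to handle the nonlinear part of the recursion without tracking combinatorics order by order.

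That nonlinear part is exactly where your outline is thin. In $\mathcal F_{\alpha,\beta}$ there are terms that are products of $k\ge 2$ lower-order factors $D^{\mu_l}_{v'}D^{\nu_l}_{\bar v'}z_n$; naively substituting the inductive hypothesis $C_0'(C_1')^{|\mu_l+\nu_l|}(\cdots)$ yields a factor $(C_0')^k$, and in the worst case (no derivative lands on $f$, so $\sum_l|\mu_l+\nu_l|=|\alpha+\beta|$) you gain no extra power of $C_1'$ to absorb it. The sentence ``by choosing $C_1'$ sufficiently larger \ldots\ the inductive bound closes'' hides precisely this step. The standard repair is to strengthen the ansatz to $C_0'(C_1')^{|\mu+\nu|-1}$ for $|\mu+\nu|\ge 1$, so that each additional factor donates a spare $(C_1')^{-1}$ and the geometric series over $k$ converges; this is the induction-side analogue of what the paper's majorant equation accomplishes in one stroke. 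Your treatment of the exponential weight $\lambda_{b,|\beta|}$ (tracking which factor receives an anti-holomorphic derivative) and of the diagonal refinement $\eps\to 0$ is correct and matches the paper's reasoning.
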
	
	
	\begin{proof}
		For simplicity, we denote $v=(x,y,z_1,z_2\cdots,z_{n-1},\theta)$. Near some point $(v, z_n)$, we have the Taylor series of $f$ as
		\begin{equation*}
		f(v',z'_n)=\sum_{\alpha,\beta\geq 0,i,j\geq 0} a_{\alpha\bar{\beta}i\bar{j}}(v'-v)^\alpha(\oo{v'-v})^{\beta}(z'_n-z_{n})^i(\oo{z'_n-z_{n}})^j,
		\end{equation*}
		where $a_{\alpha\bar{\beta}i\bar{j}}=\frac{ D_v^{\alpha} D_{\bar{v}}^\beta D_{z_n}^i D_{{\bar{z}_n}}^jf}{\alpha!\beta!i!j!}(v,z_{n})$.
		The equation $f=0$ implies
		\begin{align}\label{eq 4.15}
		\begin{split}
		&a_{0\bar{0}0\bar{0}}+a_{0\bar{0}1\bar{0}}(z'_n-z_{n})+a_{0\bar{0}0\bar{1}}(\oo{z'_n-z_{n}})
		\\=&-\sum_{|\alpha+\beta|>0}\left(a_{\alpha\bar{\beta}0\bar{0}}+a_{\alpha\bar{\beta}1\bar{0}}(z'_n-z_{n})+a_{\alpha\bar{\beta}0\bar{1}}(\oo{z'_n-z_{n}})\right)(v'-v)^\alpha(\oo{v'-v})^{\beta}
		\\&
		-\sum_{\alpha,\beta,i+j\geq 2}a_{\alpha\bar{\beta}i\bar{j}}(v'-v)^\alpha(\oo{v'-v})^{\beta}(z'_n-z_{n})^i(\oo{z'_n-z_{n}})^j.
		\end{split}
		\end{align}
		Assume near $v$, the Taylor series of $z_n=z_n(v)$ is as follows.
		\begin{equation*}
		z'_n-z_{n}=\sum_{|\gamma+\delta|>0}b_{\gamma\bar{\delta}}(v'-v)^\gamma(\oo{v'-v})^\delta,
		\end{equation*}
		where $b_{\gamma\bar{\delta}}=\frac{ D_v^\gamma D_{\bar{v}}^\delta z_n}{\gamma!\delta!}(v)$.
		
		We define the following index sets for simplicity. 
		\begin{align*}
		A_{\alpha\beta\gamma\delta}
		=\left \{ i,j, \{ \xi_k,\eta_k \}_{1\leq k \leq i},  \{ \xi'_{l}, \eta'_{l}\}_{1\leq l \leq j}: \quad  \begin{array}{ll} \alpha+\sum_{1\leq k \leq i} \xi_{k}+\sum_{1\leq l \leq j} \xi'_{l}=\gamma, \\ \beta+\sum_{1\leq k \leq i} \eta_{k}+\sum_{1\leq l \leq j} \eta'_{l}=\delta,  \\ \xi_{k}+\xi'_{k}>0,\eta_{l}+\eta'_{l}>0,  i+j\geq 2 \end{array}  \right \}.
		\end{align*}
		\begin{align*}
		B_{\gamma\delta}=\left\{\alpha,\beta,\xi,\eta: 
		\alpha+\xi=\gamma, \beta+\eta=\delta, |\alpha+\beta|>0\right\}.
		\end{align*}
		When restrict \eqref{eq 4.15} to points $(x,y,z,\theta(x,y,z))$, $a_{0\bar{0}0\bar{0}}=0$. By comparing the coefficients of  \eqref{eq 4.15}, for any multi-indices $|\gamma+\delta|>0$, we have
		\begin{align*}
		\begin{split}
		&a_{0\bar{0}1\bar{0}}b_{\gamma\bar{\delta}}+a_{0\bar{0}0\bar{1}}\oo{b_{\delta\bar{\gamma}}}
		\\&\quad=-a_{\gamma\bar{\delta}0\bar{0}}-\sum_{B_{\gamma\delta}}a_{\alpha\bar{\beta}1\bar{0}}b_{\xi\bar{\eta}}
		-\sum_{B_{\gamma\delta}}a_{\alpha\bar{\beta}0\bar{1}}\oo{b_{\eta\bar{\xi}}}
		-\sum_{A_{\alpha\beta\gamma\delta}}a_{\alpha\bar{\beta}i\bar{j}}b_{\xi_1\bar{\eta}_1}\cdots b_{\xi_i\bar{\eta}_i}\oo{b_{\eta_1'\bar{\xi}_1'}}\cdots\oo{b_{\eta_j'\bar{\xi}_j'}}.
		\end{split}
		\end{align*}
		Taking the conjugate and switch the multi-indices $\gamma$ and $\delta$, we have
		\begin{align*}
		\begin{split}
		&\oo{a_{0\bar{0}0\bar{1}}}b_{\gamma\bar{\delta}}+\oo{a_{0\bar{0}1\bar{0}}}\oo{b_{\delta\bar{\gamma}}}
		\\&\quad =-\oo{a_{\delta\bar{\gamma}0\bar{0}}}-\sum_{B_{\delta\gamma}}\oo{a_{\alpha\bar{\beta}1\bar{0}}}\oo{b_{\xi\bar{\eta}}}
		-\sum_{B_{\delta\gamma}}\oo{a_{\alpha\bar{\beta}0\bar{1}}}b_{\eta\bar{\xi}}
		-\sum_{A_{\alpha\beta\delta\gamma}}\oo{a_{\alpha\bar{\beta}i\bar{j}}}\oo{b_{\xi_1\bar{\eta}_1}}\cdots \oo{b_{\xi_i\bar{\eta}_i}}b_{\eta_1'\bar{\xi}_1'}\cdots b_{\eta_j'\bar{\xi}_j'}.
		\end{split}
		\end{align*}
		Then for any $|\gamma+\delta|>0$, by solving $b_{\gamma\bar{\delta}}$, we obtain the following recursive formula on the coefficients  $b_{\gamma\bar{\delta}}$.
		\begin{align}\label{eq 4.19}
		\begin{split}
		&b_{\gamma\bar{\delta}}
		\\=&-\frac{\oo{a_{0\bar{0}1\bar{0}}}}{|a_{0\bar{0}1\bar{0}}|^2-|a_{0\bar{0}0\bar{1}}|^2}
		\left(a_{\gamma\bar{\delta}0\bar{0}}+\sum_{B_{\gamma\delta}}a_{\alpha\bar{\beta}1\bar{0}}b_{\xi\bar{\eta}}
		+\sum_{B_{\gamma\delta}}a_{\alpha\bar{\beta}0\bar{1}}\oo{b_{\eta\bar{\xi}}}
		+\sum_{A_{\alpha\beta\gamma\delta}}a_{\alpha\bar{\beta}i\bar{j}}b_{\xi_1\bar{\eta}_1}\cdots b_{\xi_i\bar{\eta}_i}\oo{b_{\eta_1'\bar{\xi}_1'}}\cdots \oo{b_{\eta_j'\bar{\xi}_j'}}\right)
		\\
		&+\frac{a_{0\bar{0}0\bar{1}}}{|a_{0\bar{0}1\bar{0}}|^2-|a_{0\bar{0}0\bar{1}}|^2}
		\left(\oo{a_{\delta\bar{\gamma}0\bar{0}}}+\sum_{B_{\delta\gamma}}\oo{a_{\alpha\bar{\beta}1\bar{0}}}\oo{b_{\xi\bar{\eta}}}
		+\sum_{B_{\delta\gamma}}\oo{a_{\alpha\bar{\beta}0\bar{1}}}b_{\eta\bar{\xi}}
		+\sum_{A_{\alpha\beta\delta\gamma}}\oo{a_{\alpha\bar{\beta}i\bar{j}}}\oo{b_{\xi_1\bar{\eta}_1}}\cdots \oo{b_{\xi_i\bar{\eta}_i}}b_{\eta_1'\bar{\xi}_1'}\cdots b_{\eta_j'\bar{\xi}_j'}\right).
		\end{split}
		\end{align}
		By \eqref{f}, when $\theta=\theta(x,y,z)$, the Taylor coefficients $a_{\alpha\bar{\beta}k\bar{l}}$ satisfies that
		\begin{equation*}
		|a_{\alpha\bar{\beta}k\bar{l}}|\leq C_0C_1^{|\alpha+\beta|+k+l}(\alpha!\beta!k!l!)^{a-1+\eps}\lambda_{b,|\beta|+l}(x,y,z),
		\end{equation*}
		where $\lambda$ is as defined in \eqref{Lambda}.
		We normalized $a_{\alpha\bar{\beta}k\bar{l}}$ to $\widetilde{a}_{\alpha\beta kl}$ as 
		\begin{equation}\label{atilde}
		\widetilde{a}_{\alpha\beta kl}
		=\frac{|a_{\alpha\bar{\beta}k\bar{l}}|}{(\alpha!\beta!k!l!)^{a-1+\eps}\lambda_{b,|\beta|+l}(x,y,z)},
		\end{equation}
		which is dominated by $C_0C_1^{|\alpha+\beta|+k+l}$. Similarly, we define
		\begin{equation}\label{btilde}
		\widetilde{b}_{\gamma\delta}=
		\frac{|b_{\gamma\bar{\delta}}|}{(\gamma!\delta!)^{a-1+\eps}\lambda_{b,|\beta|+l}(x,y,z)}.
		\end{equation}
		Since the matrix
		$\begin{pmatrix}
		\frac{\partial f}{\partial z_n} & \frac{\partial f}{\partial \oo{z_n}}\\
		\frac{\partial \oo{f}}{\partial z_n} & \frac{\partial \oo{f}}{\partial \oo{z_n}}
		\end{pmatrix}$ is non-singular at $(x_0,y_0,z_0,\theta_0)$, by choosing a sufficiently small neighborhood $U$ of $(x_0,y_0,z_0,\theta_{n0})$,  we have $\inf_{U}\left||a_{0\bar{0}1\bar{0}}|^2-|a_{0\bar{0}0\bar{1}}|^2\right|\geq A>0$.
		By \eqref{atilde}, \eqref{btilde} and using triangle inequalities and Lemma \ref{Combinatoric 2}, we write \eqref{eq 4.19} the following recursive inequality on $b_{\gamma\delta}$.
		\begin{align}\label{eq 4.28}
		\begin{split}
		\widetilde{b}_{\gamma\delta}
		&\leq \frac{C_0C}{A}
		\left(\widetilde{a}_{\gamma\delta00}+\sum_{B_{\gamma\delta}}\widetilde{a}_{\alpha\beta10}\widetilde{b}_{\xi\eta}
		+\sum_{B_{\gamma\delta}}\widetilde{a}_{\alpha\beta01}\widetilde{b}_{\eta\xi}
		+\sum_{A_{\alpha\beta\gamma\delta}}\widetilde{a}_{\alpha\beta ij}\widetilde{b}_{\xi_1\eta_1}\cdots \widetilde{b}_{\eta_j'\xi_j'}(6n)^{i+j}\right)
		\\
		&\quad +\frac{C_0C}{A}
		\left(\widetilde{a}_{\delta\gamma00}+\sum_{B_{\delta\gamma}}\widetilde{a}_{\alpha\beta10}\widetilde{b}_{\xi\eta}
		+\sum_{B_{\delta\gamma}}\widetilde{a}_{\alpha\beta01}\widetilde{b}_{\eta\xi}
		+\sum_{A_{\alpha\beta\delta\gamma}}\widetilde{a}_{\alpha\beta ij}\widetilde{b}_{\xi_1\eta_1}\cdots \widetilde{b}_{\eta_j'\xi_j'}(6n)^{i+j}\right).
		\end{split}
		\end{align}
		
		Recall the definition of \textit{majorant} for power series as follows.
		\begin{defn}[Majorant]
			Consider two power series in variables $x\in \mathbb{R}^n$. 
			\begin{align*}
			f(x)\sim \sum_{\alpha\geq 0} a_{\alpha}x^{\alpha}, \quad g(x)\sim \sum_{\alpha\geq 0} b_{\alpha}x^{\alpha}.
			\end{align*}
			We say that $g$ is a majorant of $f$, or $b_\alpha$ is a majorant of $a_\alpha$, if
			$|a_\alpha|\leq b_\alpha$ for any $\alpha\geq 0$. And  we denote this by $f<<g$
		\end{defn}
		For multi-indices $|\ga+\delta|>0$, we define $d_{\ga\delta}$ recursively as
		\begin{align}\label{eq 4.29}
		\begin{split}
		d_{\gamma\delta}
		&=\frac{C_0^2C_1}{A}
		\left(C_2^{|\gamma+\delta|}+\sum_{B_{\gamma\delta}}C_2^{|\alpha+\beta|+1}d_{\xi\eta}
		+\sum_{B_{\gamma\delta}}C_2^{|\alpha+\beta|+1}d_{\eta\xi}
		+\sum_{A_{\alpha\beta\gamma\delta}}C_2^{|\alpha+\beta|+i+j}d_{\xi_1\eta_1}\cdots d_{\eta_j'\xi_j'}\right)
		\\
		&\quad+\frac{C_0^2C_1}{A}
		\left(C_2^{|\delta+\gamma|}+\sum_{B_{\delta\gamma}}C_2^{|\alpha+\beta|+1}d_{\xi\eta}
		+\sum_{B_{\delta\gamma}}C_2^{|\alpha+\beta|+1}d_{\eta\xi}
		+\sum_{A_{\alpha\beta\delta\gamma}}C_2^{|\alpha+\beta|+i+j}d_{\xi_1\eta_1}\cdots d_{\eta_j'\xi_j'}\right),
		\end{split}
		\end{align}
		where $C_2=6nC_1$. Since $C_0C_2^{|\alpha+\beta|+i+j}$ is a majorant of $\widetilde{a}_{\alpha\beta ij}$,   $d_{\gamma\delta}$ defined as above is a majorant of $\widetilde{b}_{\gamma\delta}$ in \eqref{eq 4.28} for any $|\gamma+\delta|>0$. Now we will solve $d_{\gamma\delta}$ by the recursive equation \eqref{eq 4.29}.
		Formally, we define $d(u,v)=\sum_{|\gamma+\delta|>0}d_{\gamma\delta} u^{\gamma}v^{\delta}$. Then \eqref{eq 4.29} is equivalent to
		\begin{align*}
		d&(u,v)\\
		&=\frac{2C_0^2C_1}{A}\left(\frac{1}{1-C_2d(u,v)}\frac{1}{1-C_2d(v,u)}\prod_{i=1}^m\frac{1}{(1-C_2u_i)(1-C_2v_i)}-1-C_2d(u,v)-C_2d(v,u)\right),
		\end{align*}
		where $m=3n$.
		It is easy to see that $d(u,v)=d(v,u)$ and thus
		\begin{equation}
		d(u,v)=\frac{2C_0^2C_1}{A}\left(\frac{1}{(1-C_2d(u,v))^2}\prod_{i=1}^m\frac{1}{(1-C_2u_i)(1-C_2v_i)}-1-2C_2d(u,v)\right).
		\end{equation}
		Observe that $(u,v,d)=0$ satisfies the equation and there is no linear term of $d$ on the right hand side. By the Implicit Theorem for real analytic functions (See \cite{KrPa} for more details), it follows that $d(u,v)$ is real analytic near the origin. Therefore, there exists some constant $C_3$ such that $\widetilde{b}_{\gamma\delta}\leq d_{\gamma\delta}\leq C_3^{\gamma+\delta}$ for any $|\gamma+\delta|>0$. By using \eqref{btilde}, we obtain the desired bounds for $b_{\gamma\bar{\delta}}$. 
		In addition, note that the constant $\eps$ only comes from the estimate of $f$ in \eqref{f}. Therefore, when we are restricted to $x=y=\bar{z}$, the constant $\eps$ can be replaced by $0$. 
	\end{proof}

	Now we will do induction on the dimension $n$. When $n=1$, the result directly follows from the previous lemma. We assume the result holds for $n-1$ and proceed to $n$. First, we consider the equation $F_n(x,y,z,\theta)=0$. Since the matrix	
	$\begin{pmatrix}
	\frac{\partial F_n}{\partial z_n} & \frac{\partial F_n}{\partial \oo{z_n}}\\
	\frac{\partial \oo{F}_n}{\partial z_n} & \frac{\partial \oo{F}_n}{\partial \oo{z_n}}
	\end{pmatrix}$ is identity at $(x_0,y_0,z_0,\theta_0)$, by using the previous lemma again, we have the implicit function $z_n=h_n(x,y,z',\theta)$, which satisfies
	\begin{equation*}
	F_n(x,y,z',h_n,\theta)=0.
	\end{equation*}
	Take the derivative with respect to $z_j$ and $\bar{z}_j$ for $1\leq j\leq n-1$. Then at $(x_0,y_0,z_0,\theta_0)$,
	\begin{align*}
	\frac{\partial h_n}{\partial z_j}=
	\frac{\partial h_n}{\partial \bar{z}_j}=0.
	\end{align*}
	Define $G_{i}(x,y,z',\theta)=F_{i}(x,y,z',h_n(x,y,z',\theta))$ for $1\leq i\leq n-1$. Since functions $F$ and $h_n$ satisfy \eqref{F} and \eqref{zn} respectively, the composition function $G_i(x,y,z',\theta)$ for $1\leq i\leq n-1$ also satisfy the estimates on the derivatives \eqref{F} by a similar argument as in the proof of Lemma \ref{AHE Composition}. On the other hand, we have for any $1\leq i,j\leq n-1$, at $(x_0,y_0,z_0,\theta_0)$
	\begin{align*}
	\frac{\partial G_i}{\partial z_j}=\frac{\partial F_i}{\partial z_j}=\delta_{ij}, \quad \frac{\partial G_i}{\partial \bar{z}_j}=\frac{\partial F_i}{\partial \bar{z}_j}=0.
	\end{align*}
	Therefore, the matrix$\begin{pmatrix}
	\frac{\partial G}{\partial z'} & \frac{\partial G}{\partial \oo{z}'}\\
	\frac{\partial \oo{G}}{\partial z'} & \frac{\partial \oo{G}}{\partial \oo{z}'}
	\end{pmatrix}$ is identity at $(x_0,y_0,z_0,\theta_0)$.
	Using the conclusion from the induction, we have the implicit functions $z_i=h_i(x,y,\theta)$ of the equations $G_i(x,y,z',\theta)=0$ for $1\leq i\leq n-1$. It is easy to verify that $z_i=h_i(x,y,\theta)$ for $1\leq i\leq n-1$ and $z_n=h_n(x,y,h_1,h_2,\cdots, h_{n-1},\theta_n)$ satisfy all the requirements and our result follows.
\end{proof}

\section*{Acknowledgements}

The author is very grateful to Prof. Hamid Hezari for many stimulating conversations and valuable suggestions.  The author also thanks  Prof. Zhiqin Lu and Prof. Bernard Shiffman for their constant support and mentoring. 


\end{document}